\DeclareMathOperator{\trace}{Tr}
\DeclareMathOperator{\divergence}{div}
\newtheorem{theorem}{Theorem}[section]
\newtheorem{definition}[theorem]{Definition}
\newtheorem{lemma}[theorem]{Lemma}
\newtheorem{proposition}[theorem]{Proposition}
\newtheorem{remark}[theorem]{Remark}
\newtheorem{claim}{Claim}
\numberwithin{equation}{section}
\def\blfootnote{\xdef\@thefnmark{}\@footnotetext}
\begin{document}
\vspace{1 true cm}

\title[]{Asymptotic behavior of Palais-Smale sequences\\ associated with fractional Yamabe type equations}%

\author[]{Yi Fang}%

\address{Yi Fang, School of Mathematical Sciences, University of Science and Technology of China, Hefei, Anhui 230026, P.R.China}
\email{fangyi1915@gmail.com}

\author[]{Maria del  Mar Gonzalez}%

\address{Maria del Mar Gonzalez, Departament de Matem\`tica Aplicada, Universitat Polit\`ecnica de Catalunya, Av. Diagonal 647, Barcelona 08028, Spain}
\email{mar.gonzalez@upc.edu}
   
\maketitle

\blfootnote{Y. Fang is fully supported by China Scholarship Council(CSC) for visiting University of California, Santa Cruz. M.d.M. Gonz\'alez is supported by Spain Government project MTM2011-27739-C04-01 and GenCat 2009SGR345.}

{\bf Abstract.}\ \ In this paper, we analyze the asymptotic behavior of Palais-Smale sequences associated to fractional Yamabe type equations on an asymptotically hyperbolic Riemannian manifold. We prove that Palais-Smale sequences can be decomposed into the solution of the limit equation plus a finite number of bubbles, which are the rescaling of the fundamental solution for the fractional Yamabe equation on Euclidean space. We also verify the non-interfering fact for multi-bubbles.

{\bf Keywords.}\ \ Palais-Smale sequences, asymptotically hyperbolic Riemannian manifolds, fractional Yamabe type equations

\vspace{0.5 true cm}
\section{Introduction and statement of results}

Let $\Omega$ be a smooth bounded domain in $\mathbb{R}^n$, $n\geq3$. Fix a constant $\lambda$, and consider the Dirichlet boundary value problem of the elliptic PDE
\begin{equation}\label{Euclidean}
    \left\{
\begin{split}
  -\Delta u-\lambda u& =u|u|^{\frac{4}{n-2}} &\hbox{in }\ \Omega, \\
     u&=0 &\hbox{on} \ \partial\Omega.
 \end{split}
\right.
\end{equation}
The associated variational functional of the equation \eqref{Euclidean} in the Sobolev space $W^{1,2}_0(\Omega)$ is
$$
E(u)=\frac{1}{2}\int_\Omega(|\nabla u|^2-\lambda u^2)\,dx-\frac{n-2}{2n}\int_\Omega|u|^{\frac{2n}{n-2}}\,dx.
$$
Suppose that the sequence $\{u_\alpha\}_{\alpha\in \mathbb{N}}\subset W^{1,2}_0(\Omega)$ satisfies the Palais-Smale condition, i.e.
$$\{E(u_\alpha)\}_{\alpha\in \mathbb{N}}\text{ is uniformly bounded and }DE(u_\alpha)\rightarrow0, \mbox{ strongly in }(W^{1,2}_0(\Omega))',$$
as $\alpha\rightarrow+\infty$, where  $(W^{1,2}_0(\Omega))'$ is the dual space of $W^{1,2}_0(\Omega)$. In an elegant paper \cite{S}, M. Struwe considered the asymptotic behavior of  $\{u_\alpha\}_{\alpha\in \mathbb{N}}$. In fact, in the $W^{1,2}_0(\Omega)$ norm, $u_\alpha$ can be approximated by the solution to \eqref{Euclidean} plus a finite number of bubbles, which are the rescaling of the non-trivial entire solution of
$$
-\Delta u=u|u|^{\frac{4}{n-2}} \ \ \hbox{in} \ \ \mathbb{R}^n \ \ \hbox{and}  \ \ u(x)\rightarrow0 \ \ \hbox{as} \ \ |x|\rightarrow+\infty.
$$

One may pose the analogous problem on a manifold. Let $(M^n,g)$ be a smooth compact Riemannian manifold without boundary. Consider a sequence of  elliptic PDEs like
\begin{equation}\label{Ealpha}
-\Delta_g u+h_\alpha u=u^{\frac{n+2}{n-2}},\tag{$E_\alpha$}
\end{equation}
where $\alpha\in \mathbb{N}$ and $\Delta_g$ denotes the Laplace-Beltrami operator of the metric $g$. Assume that $h_\alpha$ satisfies that there exists $C>0$ with $|h_\alpha(x)|\leq C$ for any $\alpha$ and any $x\in M$; also $h_\alpha\rightarrow h_\infty$ in $L^2(M)$ as $\alpha\rightarrow +\infty$. The limit equation is denoted by
\begin{equation}\label{Einfty}
-\Delta_g u+h_\infty u=u^{\frac{n+2}{n-2}}\tag{$E_\infty$}.
\end{equation}
The related variational functional for \eqref{Ealpha} is
$$
E^\alpha_g(u)=\frac{1}{2}\int_M|\nabla u|^2_gdv_g+\frac{1}{2}\int_Mh_\alpha u^2dv_g-\frac{n-2}{2n}\int_M |u|^{\frac{2n}{n-2}}dv_g.
$$
Suppose that $\{u_\alpha\geq 0\}_{\alpha\in \mathbb{N}}\subset W^{1,2}(M)$ also satisfies the Palais-Smale condition. O. Druet, E. Hebey and F. Robert \cite{D-H-R} proved that, in the $W^{1,2}(M)$-sense, $u_\alpha$ can be decomposed into the solution of $(E_\infty)$ plus a finite number of bubbles, which are the rescaling of the non-trivial solution of
$$
-\Delta u=u^{\frac{n+2}{n-2}} \ \ \hbox{in} \ \ \mathbb{R}^n.
$$

Next, let $(M^n,g)$ be a compact Riemannian manifold with boundary $\partial M$. Recently, S. Almaraz \cite{A} considered the following sequence of equations with nonlinear boundary value condition
\begin{equation}\label{Riemannian}
  \left\{
  \begin{split}
   -\Delta_g u&=0 &\quad\hbox{in } \ M,\\
    -\frac{\partial}{\partial\eta_g}u+h_\alpha u&=u^{\frac{n}{n-2}}  &\quad\hbox{on} \ \partial M,
 \end{split}
\right.
\end{equation}
where $\alpha\in \mathbb{N}$ and $\eta_g$ is the inward unit normal vector to $\partial M$. The associated energy functional for equation \eqref{Riemannian} is
$$
{\bar{E}}^\alpha_g(u)=\frac{1}{2}\int_M|\nabla u|^2_gdv_g+\frac{1}{2}\int_{\partial M}h_\alpha u^2d\sigma_g-\frac{n-2}{2(n-1)}\int_{\partial M}|u|^{\frac{2(n-1)}{n-2}}d\sigma_g,
$$
for $u\in H^1(M):=\{u|\nabla u\in L^2(M), u\in L^2(\partial M)\}$. Here $dv_g$ and $d\sigma_g$ are the volume forms of  $M$ and $\partial M$, respectively. He also showed that a nonnegative Palais-Smale sequence $\{u_\alpha\}_{\alpha\in \mathbb{N}}$ of $\{{\bar{E}}^\alpha_g\}_{\alpha\in\mathbb{N}}$ converges, in the $H^1(M)$-sense, to a solution of the limit equation (the equation replacing  $h_\alpha$ by $h_\infty$ in \eqref{Riemannian}) plus a finite number of bubbles.

Motivated by these facts and the original study of the fractional Yamabe problem by M.d.M. Gonz\'alez and J. Qing \cite{G-Q}, in this paper we shall be interested in the asymptotic behavior of nonnegative Palais-Smale sequences associated with the fractional Yamabe equation on an asymptotically hyperbolic Riemannian manifold.

Let ($X^{n+1},g^+$), $n\geq3$, be a smooth Riemannian manifold with smooth boundary $\partial X^{n+1}=M^n$.  A function $\rho_*$ is called a defining function of the boundary $ M^n$ in $X^{n+1}$ if it satisfies
$$\rho_*>0  \ \ \hbox{in} \ \ X^{n+1}, \ \ \rho_*=0  \ \ \hbox{on} \ \ M^n, \ \ d\rho_*\neq0  \ \ \hbox{on} \ \ M^n. \ \ $$
We say that a metric $g^+$ is conformally compact if there exists a defining function $\rho_*$ such that $(X^{n+1},\overline{g}_*)$ is compact for $\overline{g}_*=\rho_*^2g^+$. This induces a conformal class of metrics $\hat{h}=\overline{g}_*|_{M^n}$ when defining functions vary. The conformal manifold $(M^n,[\hat{h}])$ is called the conformal infinity of $(X^{n+1},g^+)$. A metric $g^+$ is said to be asymptotically hyperbolic if it is conformally compact and the sectional curvature approaches $-1$ at infinity. It is easy to check then that $|d\rho_*|^2_{\overline{g}_*}=1$ on $M^n$.

  Using the meromorphic family of scattering operators $S(s)$ introduced by C.R. Graham and M. Zworski \cite{G-Z}, we will define the so-called fractional order scalar curvature. Given an asymptotically hyperbolic Riemannian manifold $(X^{n+1},g^+)$ and a representative $\hat{h}$ of the conformal infinity $(M^n,[\hat{h}])$, there is a unique geodesic defining function $\rho_*$ such that, in $M^n\times(0,\delta)$ in $X^{n+1}$, for small $\delta$, $g^+$ has the normal form
$$
g^+=\rho_*^{-2}(d\rho_*^2+h_{\rho_*})
$$
where $h_{\rho_*}$ is a one parameter family of metric on $M^n$ such that
$$
h_{\rho_*}=\hat{h}+h^{(1)}\rho_*+O(\rho_*^2).
$$
It is well-known  \cite{G-Z} that, given $f\in \mathbb{C}^\infty(M^n)$, and $s\in\mathbb C$, $Re(s)>n/2$ and $s(n-s)$ is not an $L^2$ eigenvalue for $-\Delta_{g^+}$, then the generalized eigenvalue problem
\begin{equation}\label{eigenvalue1}
-\Delta_{g^+}\tilde u-s(n-s)\tilde u=0 \ \ \ \hbox{in}\ X^{n+1}
\end{equation}
has a solution of the form
$$
\tilde u=F(\rho_*)^{n-s}+G(\rho_*)^s, \ \ F,G\in\mathcal{C}^{\infty}(\overline{X}^{n+1}), \ \ F|_{\rho_*=0}=f.
$$
The scattering operator on $M^n$ is then defined as
$$
S(s)f=G|_{M^n}.
$$
Now we consider the normalized scattering operators
$$
P_\gamma[g^+,\hat{h}]=d_\gamma S\left(\frac{n}{2}+\gamma\right),  \ \ \ \ \ d_\gamma=2^{2\gamma}\frac{\Gamma(\gamma)}{\Gamma(-\gamma)}.
$$
Note  $P_\gamma[g^+,\hat{h}]$ is a pseudo-differential operator whose principal symbol is equal to the one of $(-\Delta_{\hat{h}})^\gamma$. Moreover, $P_\gamma[g^+,\hat{h}]$ is conformally covariant, i.e. for any $\varphi, w\in \mathcal C^\infty(\overline{X^{n+1}})$ and $w>0$, it holds
\begin{equation}\label{fractional covariant}
P_\gamma[g^+,w^{\frac{4}{n-2\gamma}}\hat{h}](\varphi)=w^{-\frac{n+2\gamma}{n-2\gamma}}P_\gamma[g^+,\hat{h}](w\varphi).
\end{equation}
Thus we shall call $P_\gamma[g^+,\hat{h}]$ the conformal fractional Laplacian for any $\gamma\in(0,n/2)$ such that $n^2/4-\gamma^2$ is not an $L^2$ eigenvalue for $-\Delta_{g^+}$.

The fractional scalar curvature associated to the operator $P_\gamma[g^+,\hat{h}]$ is defined as
$$
Q^{\hat{h}}_\gamma=P_\gamma[g^+,\hat{h}](1).
$$
The scattering operator has a pole at the integer values $\gamma$. However, in such cases the residue may be calculated and, in particular, when $g^+$ is Poincar\'{e}-Einstein metric, for $\gamma=1$ we have
$$
P_1[g^+,\hat{h}]=-\Delta_{\hat{h}}+\frac{n-2}{4(n-1)}R_{\hat{h}}
$$
is exactly the so-called conformal Laplacian, and
$$
Q^{\hat{h}}_1=\frac{n-2}{4(n-1)}R_{\hat{h}}.
$$
Here $R_{\hat{h}}$ is the scalar curvature of the metric $\hat{h}$.

 For $\gamma=2$,  $P_2[g^+,{\hat{h}}]$ is precisely the Paneitz operator and its associated curvature is known as $Q$-curvature \cite{P}.
 In general, $P_k[g^+,\hat{h}]$ for $k\in\mathbb{N}$ are precisely the conformal powers of the Laplacian studied in \cite{G-J-M-S}.

We consider the conformal change $\hat{h}_w=w^{\frac{4}{n-2\gamma}}\hat{h}$ for some $w>0$, then by \eqref{fractional covariant}, we have
$$
P_\gamma[g^+,\hat{h}](w)=Q^{\hat{h}_w}_\gamma w^{\frac{n+2\gamma}{n-2\gamma}}  \ \ \hbox{in}   \  (M^n,\hat{h}).
$$
If for this conformal change $Q^{\hat{h}_w}_\gamma$ is a constant $C_\gamma$ on $M^n$, this problem reduces to
\begin{equation}\label{equation0}
P_\gamma[g^+,\hat{h}](w)=C_\gamma w^{\frac{n+2\gamma}{n-2\gamma}}  \ \ \hbox{in}   \  (M^n,\hat{h}),
\end{equation}
which is the so-called the fractional Yamabe equation or the $\gamma$-Yamabe equation studied in \cite{G-Q}.

From now on, we always suppose that $\gamma\in(0,1)$ throughout the paper, and such that $n^2/4-\gamma^2$ is not an $L^2$ eigenvalue for $-\Delta_{g^+}$.\\

It is well known that the above fractional Yamabe equation may be rewritten as a degenerate elliptic Dirichlet-to-Neumann boundary problem. For that, we first recall some results obtained by S.A. Chang and M.d.M. Gonz\'alez in \cite{C-G}. Suppose that $u^*$ solves
\begin{equation}\label{eigenvalue2}
    \left\{
  \begin{split}
   -\Delta_{g^+} u^*-s(n-s)u^*=0 &\quad\hbox{in } \ X^{n+1},\\
   \lim_{\rho_*\to 0}\rho^{s-n}_* u^*=1  &\quad\hbox{on} \ M^n.
 \end{split}
\right.
\end{equation}

\begin{proposition}\label{special defining function proposition}\cite{C-G,G-Q}
Let $f\in\mathcal C^\infty(M)$. Assume that $\tilde u,u^*$ are solutions to \eqref{eigenvalue1} and \eqref{eigenvalue2}, respectively. Then $\rho=(u^*)^{1/(n-s)}$ is a geodesic defining function. Moreover, $u=\tilde u/u^*=\rho^{s-n}\tilde u$ solves
\begin{equation}
    \left\{
  \begin{split}
   -\divergence(\rho^{1-2\gamma}\nabla u)=0 &\quad\hbox{in } \ X^{n+1},\\
   u=f  &\quad\hbox{on} \ M^n,
 \end{split}
\right.
\end{equation}
with respect to the metric $g=\rho^2g^+$ and $u$ is the unique minimizer of the energy functional
$$
I(v)=\int_{X^{n+1}}\rho^{1-2\gamma}|\nabla v|^2_gdv_g
$$
among all the extensions $v\in W^{1,2}(X^{n+1},\rho^{1-2\gamma})$ (see Definition \ref{weighted sobolev space}) satisfying $v|_{M^n}=f$. Moreover,
$$
\rho=\rho_*\left(1+\frac{Q^{\hat{h}}_\gamma}{(n-s)d_\gamma}\rho^{2\gamma}_*
+O(\rho^2_*)\right)
$$
near the conformal infinity and
$$
P_\gamma[g^+,\hat{h}](f)=-d_\gamma^*\lim_{\rho\rightarrow0}\rho^{1-2\gamma}\partial_\rho u+Q^{\hat{h}}_\gamma f, \ \  d^*_\gamma=-\frac{d_\gamma}{2\gamma}>0,
$$
provided that $\trace_{\hat{h}}h^{(1)}=0$ when $\gamma\in(1/2,1)$. Here $g|_{M^n}=\hat{h}$, and has asymptotic expansion
$$
g=d\rho^2[1+O(\rho^{2\gamma})]+\hat{h}[1+O(\rho^{2\gamma})].
$$
\end{proposition}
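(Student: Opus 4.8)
The plan is to follow the scattering-theory construction and then carry out the asymptotic bookkeeping near the conformal infinity.

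First I would establish that $\rho := (u^*)^{1/(n-s)}$ is a geodesic defining function. This is where the special boundary condition $\lim_{\rho_* \to 0}\rho_*^{s-n} u^* = 1$ in \eqref{eigenvalue2} is used: writing $u^* = \rho_*^{n-s}(1 + \cdots)$ and plugging into the eigenvalue equation $-\Delta_{g^+} u^* - s(n-s)u^* = 0$ together with the normal form $g^+ = \rho_*^{-2}(d\rho_*^2 + h_{\rho_*})$, one extracts from the indicial equation that $|d\rho|^2_{g}=|d\rho/\rho|^2_{g^+}=1$ in a collar neighborhood (not merely on $M^n$), which is exactly the geodesic condition for $g = \rho^2 g^+$. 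This is a standard computation with the $\Delta_{g^+}$ written in the $(\rho_*, y)$ coordinates.

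Next I would prove the divergence-form equation. Starting from the conformal covariance of the Laplacian under $g = \rho^2 g^+$ in dimension $n+1$, the substitution $\tilde u = \rho^{n-s} u$ transforms \eqref{eigenvalue1} into a degenerate equation; the weight $\rho^{1-2\gamma}$ arises precisely because $s = \tfrac n2 + \gamma$ gives $2(n-s) - (n-1) = 1 - 2\gamma$. Concretely one verifies $-\operatorname{div}_g(\rho^{1-2\gamma}\nabla_g u) = \rho^{-(n+1)/2 - \gamma}\big(-\Delta_{g^+} - s(n-s)\big)(\rho^{(n-1)/2 - \gamma}\cdot)$ acting appropriately, so the interior equation for $\tilde u$ becomes the stated weighted Laplace equation for $u$ with boundary value $u|_{M^n} = \tilde u/u^*|_{M^n} = f$. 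The variational characterization then follows because the weighted equation is the Euler--Lagrange equation of the convex functional $I(v) = \int_{X^{n+1}} \rho^{1-2\gamma}|\nabla v|^2_g\, dv_g$, and uniqueness of the minimizer is strict convexity on the affine space $\{v|_{M^n} = f\}$ inside the weighted Sobolev space of Definition~\ref{weighted sobolev space}; one should check the weight $\rho^{1-2\gamma}$ is an $A_2$ Muckenhoupt weight so that the trace and Poincar\'e inequalities apply.

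The expansion $\rho = \rho_*\big(1 + \tfrac{Q^{\hat h}_\gamma}{(n-s)d_\gamma}\rho_*^{2\gamma} + O(\rho_*^2)\big)$ is obtained by matching: write $u^* = F\rho_*^{n-s} + G\rho_*^{s}$ from the scattering expansion with $F|_{M^n}=1$, so $\rho = (u^*)^{1/(n-s)} = \rho_*(1 + \tfrac{G|_{M^n}}{n-s}\rho_*^{2\gamma} + \cdots)$, and then use $G|_{M^n} = S(s)(1) = d_\gamma^{-1} P_\gamma[g^+,\hat h](1) = d_\gamma^{-1} Q^{\hat h}_\gamma$. The Neumann-data identity $P_\gamma[g^+,\hat h](f) = -d_\gamma^* \lim_{\rho\to 0}\rho^{1-2\gamma}\partial_\rho u + Q^{\hat h}_\gamma f$ is then read off by comparing $\rho^{1-2\gamma}\partial_\rho u$ with $\rho_*^{1-2\gamma}\partial_{\rho_*}(\tilde u/\rho_*^{n-s})$: the $\rho_*^{2\gamma}$-coefficient in $\tilde u = \rho_*^{n-s}(f + \cdots + S(s)f\,\rho_*^{2\gamma}+\cdots)$ produces the scattering operator, and the extra $Q^{\hat h}_\gamma f$ term is the correction coming from the discrepancy between $\rho$ and $\rho_*$ computed just above; the constant works out to $d_\gamma^* = -d_\gamma/(2\gamma)$, which is positive because $\Gamma(\gamma)/\Gamma(-\gamma) < 0$ for $\gamma\in(0,1)$. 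The metric expansion $g = d\rho^2[1+O(\rho^{2\gamma})] + \hat h[1+O(\rho^{2\gamma})]$ follows by substituting the $\rho$--$\rho_*$ relation into the normal form of $g^+$.

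The main obstacle is the asymptotic matching near $\rho=0$ in the regime $\gamma\in(1/2,1)$: there the formal expansions of $u^*$, $\tilde u$, and the metric contain a term at order $\rho_*$ that competes with the $\rho_*^{2\gamma}$ scattering term, and it is exactly the hypothesis $\operatorname{Tr}_{\hat h} h^{(1)} = 0$ that kills the obstruction and makes the boundary limit $\lim_{\rho\to 0}\rho^{1-2\gamma}\partial_\rho u$ well-defined and equal to the claimed expression; controlling the error terms uniformly (rather than just formally) requires the regularity theory for the degenerate equation with an $A_2$ weight, so I would invoke the results of \cite{C-G} at that point rather than re-deriving them.
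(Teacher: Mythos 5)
The paper itself gives no proof of this proposition; it is quoted from \cite{C-G,G-Q}, so I am judging your outline against the argument in those references. Your treatment of the expansion of $\rho$ (via $u^*=F\rho_*^{n-s}+G\rho_*^{s}$, $F|_{M}=1$, $G|_{M}=S(s)1=d_\gamma^{-1}Q^{\hat h}_\gamma$), of the Neumann-data identity, of the variational characterization, and of the role of $\trace_{\hat h}h^{(1)}=0$ when $\gamma\in(1/2,1)$ is the right argument and matches \cite{C-G}. However, the central step --- passing from the eigenvalue equation to the pure divergence-form equation --- has a genuine gap.

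The identity you display,
$$
-\divergence_g(\rho^{1-2\gamma}\nabla_g u)=\rho^{-(n+1)/2-\gamma}\bigl(-\Delta_{g^+}-s(n-s)\bigr)\bigl(\rho^{(n-1)/2-\gamma}u\bigr),
$$
is false for a general defining function, for two reasons. First, the exponents are off: using $\Delta_{g^+}\phi=\rho^{2}\Delta_g\phi-(n-1)\rho\langle\nabla\rho,\nabla\phi\rangle_g$, the first-order terms of the conjugated operator assemble into $\divergence_g(\rho^{1-2\gamma}\nabla_g\cdot)$ only for the inner power $n-s=n/2-\gamma$ (not $(n-1)/2-\gamma$), with outer prefactor $\rho^{-(s+1)}$. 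Second, and more importantly, any such conjugation leaves a zeroth-order remainder
$$
E(\rho)\,u,\qquad E(\rho)=\rho^{-s-1}\bigl(-\Delta_{g^+}-s(n-s)\bigr)(\rho^{n-s}),
$$
which no choice of exponents removes for a generic $\rho$. The entire point of setting $\rho=(u^*)^{1/(n-s)}$ is that then $\rho^{n-s}=u^*$ solves \eqref{eigenvalue2}, so $E(\rho)\equiv0$ and the equation for $u=\tilde u/u^*$ is purely in divergence form; your proposal never invokes this cancellation, so as written the interior equation is not established. A related overclaim occurs in your first step: the indicial analysis does not yield $|d\rho|^2_g\equiv1$ in a collar (that is a first-order eikonal condition not implied by the second-order eigenvalue equation); one only gets $|d\rho|^2_g=1+O(\rho^{2\gamma})$, which is exactly what forces the $O(\rho^{2\gamma})$ corrections in the stated expansion $g=d\rho^2[1+O(\rho^{2\gamma})]+\hat h[1+O(\rho^{2\gamma})]$. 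Neither error contaminates your remaining steps, but both sit precisely where the special choice of $u^*$ does its work, so the mechanism of the proposition is missing from the proposal as written.
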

\vskip 0.1in

We fix $\gamma\in(0,1)$. By Proposition \ref{special defining function proposition}, one can rewrite the Yamabe equation \eqref{equation0} into the following problem:
\begin{equation}\label{Dirichlet-to-Neumann}
\left\{
\begin{split}
-\divergence(\rho^{1-2\gamma}\nabla u)=0   &\quad\hbox{in} \  (X^{n+1},g),\\
u=w & \quad\hbox{on} \  (M^n,\hat{h}),\\
-d_\gamma^*\lim_{\rho\rightarrow0}\rho^{1-2\gamma}\partial_\rho u+Q^{\hat{h}}_\gamma w=C_\gamma w^{\frac{n+2\gamma}{n-2\gamma}}  & \quad\hbox{on}  \ (M^n,\hat{h}).\\
\end{split}\right.
\end{equation}
In this paper we consider the positive curvature case $C_\gamma>0$. Without loss of generality, we assume $C_\gamma=d_\gamma^*$.

In the particular case $\gamma=1/2$, one may check that \eqref{Dirichlet-to-Neumann} reduces to \eqref{Riemannian}, which was considered in \cite{A}. The main difficulty we encounter here is the presence of the weight that makes the extension equation only degenerate elliptic.\\

Next, we introduce the so-called $\gamma$-Yamabe constant (c.f. \cite{G-Q}). For the defining function $\rho$ mentioned above, we set
$$
I_\gamma[u,g]=\frac{d^*_\gamma\int_X\rho^{1-2\gamma}|\nabla u|^2_g\,dv_g+\int_MQ^{\hat{h}}_\gamma u^2\,d\sigma_{\hat{h}}}
{\left(\int_M|u|^{2^*}\,d\sigma_{\hat{h}}\right)^{\frac{2}{2^*}}},
$$
then the $\gamma$-Yamabe constant is defined as
\begin{equation}\label{Yamabe-constant}
\Lambda_\gamma(M,[\hat{h}])=\inf\{I_\gamma[u,g]: u\in W^{1,2}(X,\rho^{1-2\gamma})\}.
\end{equation}
It was shown in \cite{G-Q} that in the positive curvature case $C_\gamma>0$ we must have
$\Lambda_\gamma(M,[\hat{h}])>0$.

Now we take a perturbation of the linear term $Q^{\hat{h}}_\gamma w$ to a general $-d_\gamma^*Q^\gamma_\alpha w$,
where $Q^\gamma_\alpha\in \mathcal C^\infty(M^n)$, $\alpha\in \mathbb{N}$.
Suppose that for any $\alpha\in\mathbb{N}$ and any $x\in M^n$, there exists a constant $C>0$ such that $|Q^\gamma_\alpha(x)|\leq C$. And we also assume that $Q^\gamma_\alpha\rightarrow Q^\gamma_\infty$ in $L^2(M^n,\hat{h})$ as $\alpha\rightarrow+\infty$. We will consider a family of equations
\begin{equation}\label{divergence}
\left\{
\begin{split}
-\divergence(\rho^{1-2\gamma}\nabla u)=0   &\quad\hbox{in} \  (X^{n+1},g),\\
u=w & \quad\hbox{on} \ (M^n,\hat{h}),\\
-\lim_{\rho\rightarrow0}\rho^{1-2\gamma}\partial_\rho u+Q^\gamma_\alpha w= w^{\frac{n+2\gamma}{n-2\gamma}}  & \quad\hbox{on}  \ (M^n,\hat{h}).\\
\end{split}\right.
\end{equation}
The associated variational functional to \eqref{divergence} is
\begin{equation}\label{funcional}I_g^{\gamma,\alpha}(u)
=\frac{1}{2}\int_{X^{n+1}}\rho^{1-2\gamma}|\nabla u|^2_g\,dv_g
+\frac{1}{2}\int_{M^n}Q^\gamma_\alpha u^2\,d\sigma_{\hat{h}}
-\frac{n-2\gamma}{2n}\int_{M^n}|u|^{\frac{2n}{n-2\gamma}}d\sigma_{\hat{h}}.
\end{equation}

\vskip 0.1in
Hyperbolic space $(\mathbb{H}^{n+1}, g_{\mathbb{H}})$ is the first example of a conformally compact Einstein manifold. As   $(\mathbb{H}^{n+1}, g_{\mathbb{H}})$ can be characterized as the upper half-space $\mathbb{R}^{n+1}_+$ endowed with metric $g^+=y^{-2}(|dx|^2+dy^2)$, where $x\in\mathbb{R}^n$, $y\in\mathbb{R}_+$, then the Dirichlet-to-Neumann problem \eqref{Dirichlet-to-Neumann}  reduces to
\begin{equation}\label{Euclidean divergence}
\left\{
\begin{split}
-\divergence(y^{1-2\gamma}\nabla u)=0   &\quad\hbox{in} \  (\mathbb{R}^{n+1}_+,|dx|^2+dy^2),\\
u=w & \quad\hbox{on} \  (\mathbb{R}^n,|dx|^2),\\
-\lim_{y\rightarrow0}y^{1-2\gamma}\partial_{y}u=w^{\frac{n+2\gamma}{n-2\gamma}}  & \quad\hbox{on}  \ (\mathbb{R}^n,|dx|^2).\\
\end{split}\right.
\end{equation}
And the variational functional to \eqref{Euclidean divergence} is defined as
$$
\tilde{E}(u)=\frac{1}{2}\int_{\mathbb{R}^{n+1}_+}y^{1-2\gamma}|\nabla u(x,y)|^2dxdy
-\frac{n-2\gamma}{2n}\int_{\mathbb{R}^n}|u(x,0)|^{\frac{2n}{n-2\gamma}}dx.
$$
Up to multiplicative constants, the only solution to problem \eqref{Euclidean divergence} is given by the standard
$$
w(x)=w^\lambda_a(x)=\left(\frac{\lambda}{|x-a|^2+\lambda^2}\right)^{\frac{n-2\gamma}{2}}
$$
for some $a\in\mathbb{R}^n$ and $\lambda>0$ (c.f. \cite{G-Q},\cite{J-L-X}). By  L. Caffarelli and L. Silvestre's Poisson formula \cite{C-S}, the corresponding  extension can be expressed as
\begin{align}\label{Poissonformula}
U^\lambda_a(x,y)
=\int_{\mathbb{R}^n}\frac{y^{2\gamma}}{(|x-\xi|^2+y^2)^{(n+2\gamma)/2}}\,w^\lambda_a(\xi)\,d\xi.
\end{align}
Here $U^\lambda_a$ is called a ``bubble". Note that all of them have constant energy. Indeed:

\begin{remark}
For any $a\in\mathbb{R}^n$ and $\lambda>0$, we have
$$
\tilde{E}(U^\lambda_a)=\tilde{E}(U^1_0)=
\frac{\gamma}{n}\int_{\mathbb{R}^n}|U^1_0(x,0)|^{\frac{2n}{n-\gamma}}\,dx.
$$
\end{remark}

Now we give some notations which will be used in the following. In the half space $\mathbb{R}^{n+1}_+=\{(x,y)=(x^1,\cdots,x^n,y)\in\mathbb{R}^{n+1}\,:\,y>0\}$
we define, for $r>0$,
\begin{equation*}\begin{split}
&B^+_{r}(z_0)=\{z\in\mathbb{R}^{n+1}_+:|z-z_0|<r, z_0\in\mathbb{R}^{n+1}_+\},\\
&D_{r}(x_0)=\{x\in\mathbb{R}^n:|x-x_0|<r, x_0\in\mathbb{R}^n\},\\
&\partial'B^+_{r}(z_0)=B^+_{r}(z_0)\cap\mathbb{R}^n, \ \ \partial^+B^+_{r}(z_0)=\partial B^+_{r}(z_0)\cap\mathbb{R}^{n+1}_+.
\end{split}\end{equation*}

Fix $\gamma\in(0,1)$. Suppose that  $(X,g^+)$ is an asymptotically hyperbolic manifold with boundary $M$ satisfying, in addition, $\trace_{\hat{h}}h^{(1)}=0$ when $\gamma\in(1/2,1)$. Let $\rho$ be the special defining function given in Proposition \ref{special defining function proposition} and set $g=\rho^2g^+$, $\hat{h}=g|_{M}$. We also define
\begin{equation*}
\begin{split}
&\mathfrak{B}^+_{r}(z_0)=\{z\in X:d_g(z,z_0)<r, z_0\in \overline{X}\},\\
&\mathfrak{D}_{r}(x_0)=\{x\in M:d_{\hat{h}}(x,x_0)<r, x_0\in M\},\\
\end{split}
\end{equation*}
Now, modulo the definitions of the weighted Sobolev space $W^{1,2}(X,\rho^{1-2\gamma})$ and of a Palais-Smale sequence (see section 2), the main result of this paper is the following fractional type blow up analysis theorem:

\begin{theorem}\label{main theorem}
 Let $\{u_\alpha\geq0\}_{\alpha\in \mathbb{N}}\subset W^{1,2}(X,\rho^{1-2\gamma})$ be a Palais-Smale sequence for $\{I^{\gamma,\alpha}_g\}_{\alpha\in \mathbb{N}}$. Then there exist an integer $m\geq1$, sequences $\{\mu_\alpha^j>0\}_{\alpha\in \mathbb{N}}$ and $ \{x_\alpha^j\}_{\alpha\in \mathbb{N}}\subset M $ for $j=1, \cdots, m$, also a nonnegative solution $u^0\in W^{1,2}(X,\rho^{1-2\gamma})$ to equation \eqref{limit eq} and nontrivial nonnegative functions $U^{\lambda_j}_{a_j}\in W^{1,2}(\mathbb{R}^{n+1}_+,y^{1-2\gamma})$ for some $\lambda_j>0$ and $a_j\in \mathbb{R}^n$ as given in \eqref{Poissonformula}, satisfying, up to a subsequence,
\begin{itemize}
\item[(1)] $\mu^j_\alpha\rightarrow0$ as $\alpha\rightarrow+\infty$, for  $j=1, \cdots, m$;

\item[(2)] $ \{x_\alpha^j\}_{\alpha\in \mathbb{N}}$ converges on $M$ as $\alpha\rightarrow+\infty$, for  $j=1, \cdots, m$;

\item[(3)] As $\alpha\to+\infty$,
$$\|u_\alpha-u^0-\sum^m_{j=1}\eta^j_\alpha u^j_\alpha \|_{W^{1,2}(X,\rho^{1-2\gamma})}\rightarrow0,$$ where
$$
u^j_\alpha(z)=(\mu^j_\alpha)^{-\frac{n-2\gamma}{2}}U^{\lambda_j}_{a_j}((\mu_\alpha^j)^{-1}\varphi^{-1}_{x^j_\alpha}(z))
$$
for $z\in\varphi_{x^j_\alpha}(B^+_{r_0}(0))$, and $\varphi_{x^j_\alpha}$ are Fermi coordinates centered at $x^j_\alpha\in M$
 with $r_0>0$ small, and $\eta^j_\alpha$ are cutoff functions such that
 $$
 \eta^j_\alpha\equiv1 \ \ \hbox{in} \ \varphi_{x^j_\alpha}(B^+_{r_0}(0))
 \ \ \ \hbox{and} \ \ \
 \eta^j_\alpha\equiv0 \ \ \hbox{in} \ X\setminus\varphi_{x^j_\alpha}(B^+_{2r_0}(0)) ;
  $$

\item[(4)] The energies $$I^{\gamma,\alpha}_g(u_\alpha)-I^\infty_g(u^0)-m\tilde{E}(U^{\lambda_j}_{a_j})\rightarrow0$$ as $\alpha\rightarrow+\infty$;

\item[(5)] For any $1\leq i, j\leq m$, $i\neq j$,
$$
\frac{\mu^i_\alpha}{\mu^j_\alpha}+\frac{\mu^j_\alpha}{\mu^i_\alpha}+\frac{d_{\hat{h}}(x^i_\alpha,x^j_\alpha)^2}{\mu^i_\alpha\mu^j_\alpha}\rightarrow+\infty, \ \ \ \hbox{as}   \ \alpha\rightarrow+\infty.
$$
\end{itemize}
\end{theorem}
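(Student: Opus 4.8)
The plan is to follow the concentration-compactness strategy of Struwe, adapted to the degenerate elliptic setting, as in \cite{D-H-R} and \cite{A}, but carefully tracking the weight $\rho^{1-2\gamma}$. First I would establish the preliminary facts: a Palais-Smale sequence $\{u_\alpha\}$ for $\{I^{\gamma,\alpha}_g\}$ is bounded in $W^{1,2}(X,\rho^{1-2\gamma})$ (this follows by the standard argument, combining $I^{\gamma,\alpha}_g(u_\alpha)=O(1)$ with $\langle DI^{\gamma,\alpha}_g(u_\alpha),u_\alpha\rangle=o(\|u_\alpha\|)$ and using the uniform bound on $Q^\gamma_\alpha$ together with positivity of the $\gamma$-Yamabe constant to absorb the linear term), and consequently, up to a subsequence, $u_\alpha\rightharpoonup u^0$ weakly in $W^{1,2}(X,\rho^{1-2\gamma})$, with $u^0$ a nonnegative solution of the limit equation \eqref{limit eq}; here one uses the compactness of the weighted trace embedding into $L^q(M)$ for subcritical $q$ and the $L^2$-convergence $Q^\gamma_\alpha\to Q^\gamma_\infty$ to pass to the limit in the weak formulation. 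Setting $v^1_\alpha:=u_\alpha-u^0$, one checks that $\{v^1_\alpha\}$ is again a Palais-Smale sequence, now for the functional with $Q^\gamma_\alpha$ replaced (asymptotically) by the curvature of the limit problem, converging weakly to $0$, and with $I^{\gamma,\alpha}_g(v^1_\alpha)\to I^{\gamma,\alpha}_g(u_\alpha)-I^\infty_g(u^0)$ by a Brezis--Lieb type splitting valid for the weighted Dirichlet integral and the boundary $L^{2n/(n-2\gamma)}$ term.

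The heart of the argument is the bubble extraction. If $v^1_\alpha\to 0$ strongly we are done with $m=0$; otherwise the boundary energy $\int_M|v^1_\alpha(\cdot,0)|^{2n/(n-2\gamma)}$ stays bounded away from $0$, and I would run a concentration-function argument: choosing $\mu^1_\alpha\to 0$ and $x^1_\alpha\in M$ so that the rescaled functions, written in Fermi coordinates $\varphi_{x^1_\alpha}$ as $\tilde v^1_\alpha(z)=(\mu^1_\alpha)^{(n-2\gamma)/2}v^1_\alpha(\varphi_{x^1_\alpha}(\mu^1_\alpha z))$, have a fixed small amount of boundary mass in a unit half-ball. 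Because the metric $g$ degenerates like $\rho^{1-2\gamma}$ and in Fermi coordinates near a boundary point $g\to |dx|^2+d\rho^2$ with $\rho\sim y$ (using $g=d\rho^2[1+O(\rho^{2\gamma})]+\hat h[1+O(\rho^{2\gamma})]$ from Proposition \ref{special defining function proposition}), the rescaled equation converges to the flat extension problem \eqref{Euclidean divergence}; elliptic estimates for degenerate equations with $A_2$ weights (Fabes--Kenig--Serapioni) give local $W^{1,2}(y^{1-2\gamma})$-compactness, so $\tilde v^1_\alpha\rightharpoonup U^{\lambda_1}_{a_1}$, a nonzero nonnegative solution of \eqref{Euclidean divergence}, hence one of the bubbles from \eqref{Poissonformula}. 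One then subtracts the (cut-off, rescaled) bubble $\eta^1_\alpha u^1_\alpha$ and verifies, again via Brezis--Lieb splitting in the weighted spaces — now also accounting for the interaction between the ambient metric and the Euclidean bubble, which contributes only lower-order terms because $\mu^1_\alpha\to0$ — that $v^2_\alpha:=v^1_\alpha-\eta^1_\alpha u^1_\alpha$ is a Palais-Smale sequence converging weakly to $0$ with energy dropped by exactly $\tilde E(U^{\lambda_1}_{a_1})$.

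The iteration terminates after finitely many steps because each bubble carries energy at least $\tilde E(U^1_0)>0$ (a positive constant independent of the bubble by the Remark), while the total energy $I^{\gamma,\alpha}_g(u_\alpha)-I^\infty_g(u^0)$ is bounded; this yields $m$, conclusions (1)--(4), with (2) following from compactness of $M$. For the non-interference statement (5), I would argue by contradiction: if for some $i\neq j$ the quantity $\mu^i_\alpha/\mu^j_\alpha+\mu^j_\alpha/\mu^i_\alpha+d_{\hat h}(x^i_\alpha,x^j_\alpha)^2/(\mu^i_\alpha\mu^j_\alpha)$ stays bounded, then after rescaling at scale $\mu^i_\alpha$ around $x^i_\alpha$ the two bubbles would be visible simultaneously in the limit; but the weak limit of the rescaled sequence was already identified as the single bubble $U^{\lambda_i}_{a_i}$ with the $j$-th contribution rescaling to $0$ in $W^{1,2}(\mathbb R^{n+1}_+,y^{1-2\gamma})$, forcing the energy of $U^{\lambda_j}_{a_j}$ to have been counted twice, contradicting the exact energy balance (4) established in the iteration. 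The main obstacle throughout is the degeneracy of the extension operator: every compactness and splitting step that is classical in the $W^{1,2}$ setting of \cite{D-H-R} must be redone with the $A_2$-weight $\rho^{1-2\gamma}$, which requires the weighted Sobolev and trace embeddings, the weighted elliptic regularity of Fabes--Kenig--Serapioni, and a careful comparison of the ambient degenerate metric $g$ with the model weight $y^{1-2\gamma}$ near the boundary so that the blow-up limits are genuinely the flat bubbles.
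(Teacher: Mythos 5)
Your outline for conclusions (1)--(4) coincides with the paper's strategy: boundedness and weak convergence of $u_\alpha$, identification of $u^0$ as a solution of \eqref{limit eq} with Brezis--Lieb splitting of the energy (the paper's Lemmas \ref{bounded lemma} and \ref{limit lemma}), iterative extraction of a bubble from the non-convergent remainder via a concentration function, rescaling in Fermi coordinates, and $\varepsilon$-regularity (Lemma \ref{first bubble lemma}), with termination by energy quantization (Lemma \ref{criterion lemma}). Two points, however, are genuine gaps rather than routine omissions.

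First, you assert that the rescaled remainders converge to a \emph{nonnegative} solution of \eqref{Euclidean divergence}, ``hence one of the bubbles from \eqref{Poissonformula}.'' But the functions being rescaled are $\hat u_\alpha=u_\alpha-u^0$ (and, at later stages, $u_\alpha-u^0-\sum\eta^i_\alpha u^i_\alpha$), which are \emph{not} sign-definite even though $u_\alpha\geq0$; and the classification of solutions of \eqref{Euclidean divergence} as the profiles \eqref{Poissonformula} is only available for nonnegative solutions (sign-changing critical solutions exist). One must therefore prove that the blow-up limit is nonnegative, which the paper does in Lemma \ref{nonnegative lemma} by showing that the rescalings of $u^0$ and of the previously extracted bubbles tend to $0$ a.e.\ on $\mathbb{R}^n$, so that the limit profile inherits the sign of $u_\alpha$. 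Your proposal skips this entirely.

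Second, your argument for the non-interference statement (5) is essentially circular. Boundedness of $\mu^i_\alpha/\mu^j_\alpha+\mu^j_\alpha/\mu^i_\alpha+d_{\hat h}(x^i_\alpha,x^j_\alpha)^2/(\mu^i_\alpha\mu^j_\alpha)$ forces comparable scales and comparable centers, and you then invoke that ``the $j$-th contribution rescales to $0$'' in the blow-up at scale $\mu^i_\alpha$ --- but that is precisely the content of (5), not something established by the extraction. Moreover, the contradiction cannot be read off from the energy identity (4) alone: (4) is obtained additively at each step of the iteration and holds regardless of whether bubbles interfere. The actual difficulty, which your sketch does not address, is that bubbles extracted at \emph{earlier} steps may concentrate at strictly smaller scales inside the ball $\mathfrak{D}_{R\mu^k_\alpha}(x^k_\alpha)$ (this is compatible with (5) for that pair, since then $\mu^k_\alpha/\mu^l_\alpha\to+\infty$), so the rescaled remainder does not converge strongly on fixed half-balls but only away from finitely many secondary concentration points. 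This is exactly what the paper's Lemma \ref{interfering lemma} handles, by a double induction that introduces auxiliary sequences $y^j_\alpha,\lambda^j_\alpha$ with $\lambda^j_\alpha/\mu^k_\alpha\to0$ and excises the small disks $\mathfrak{D}_{R'\lambda^j_\alpha}(y^j_\alpha)$ before passing to the limit. Some version of this bookkeeping (or a rigorous Brezis--Lieb cross-term estimate for $\int_M(\eta^i_\alpha u^i_\alpha)^{2^*-1}\eta^j_\alpha u^j_\alpha\,d\sigma_{\hat h}$ combined with (3) and (4)) is indispensable; as written, your proof of (5) does not go through.
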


\begin{remark}
\begin{itemize}
\item[(i)] We call $\eta^j_\alpha u^j_\alpha$ a bubble for $j=1,\cdots,m$.

\item[(ii)] If $u_\alpha\rightarrow u^0$ strongly in $W^{1,2}(X,\rho^{1-2\gamma})$ as $\alpha\rightarrow+\infty$, then we must have $m=0$ here.
\end{itemize}
\end{remark}

Although the local case $\gamma=1$ is well known (\cite{D-H-R},\cite{S}), the most interesting point in the fractional case is the fact that one still has an energy decomposition into bubbles, and that these bubbles are non-interfering, which is surprising since our operator is non-local. And in the setting of Euclidean fractional Sobolev space, the similar global compactness results were established in \cite{P-P1},\cite{P-P2} and \cite{Y-Y-Y}.

\vskip 0.1 in
This paper is organized as follows: In section 2, we will first recall the definition of weighted Sobolev spaces and Palais-Smale sequences. Then we shall derive  a criterion for the strong convergence of a given Palais-Smale sequence. At last, $\varepsilon$-regularity estimates will be established. In section 3, we shall extract the first bubble from the Palais-Smale sequence which is not strongly convergent. In section 4, we will give the proof of Theorem \ref{main theorem}. Finally, some regularity estimates of the degenerate elliptic PDE are given as Appendix in Section 5.


\section{Preliminary Results}

Most of the arguments in this section are analogous to the results in \cite{D-H-R} (Chapter 3). For the convenience of reader, we also prove these lemmas with the necessary modifications.

From now on we use $2^*=2n/(n-2\gamma), \gamma\in(0,1)$ for simplicity and always assume that Palais-Smale sequences are all nonnegative. Moreover, the notation $o(1)$ will be taken with respect to to the limit $\alpha\to+\infty$.

\begin{definition}\label{weighted sobolev space}
The weighted Sobolev space $W^{1,2}(X,\rho^{1-2\gamma})$ is defined as the closure of $\mathcal C^{\infty}(\overline{X})$ with norm
\begin{align} \label{norm1}
\|u\|_{W^{1,2}(X,\rho^{1-2\gamma})}=\left(\int_X\rho^{1-2\gamma}|\nabla u|^2_g\,dv_g+\int_Mu^2d\sigma_{\hat{h}}\right)^{\frac{1}{2}}
\end{align}
where $dv_g$ is the volume form of the asymptotically hyperbolic Riemannian manifold $(X,g)$ and $d\sigma_{\hat{h}}$ is the volume form of the conformal infinity $(M,[\hat{h}])$.
\end{definition}

\begin{proposition}
 The norm defined above is equivalent to the following traditional norm
\begin{align} \label{norm2}
\|u\|^*_{W^{1,2}(X,\rho^{1-2\gamma})}=\left(\int_X\rho^{1-2\gamma}(|\nabla u|^2_g+u^2)\,dv_g\right)^{\frac{1}{2}}.
\end{align}
\end{proposition}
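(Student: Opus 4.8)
\emph{Proof plan.} Since $W^{1,2}(X,\rho^{1-2\gamma})$ is by Definition \ref{weighted sobolev space} the completion of $\mathcal C^\infty(\overline X)$ under $\|\cdot\|_{W^{1,2}(X,\rho^{1-2\gamma})}$, and since for $u\in\mathcal C^\infty(\overline X)$ both $\|u\|_{W^{1,2}(X,\rho^{1-2\gamma})}$ and $\|u\|^*_{W^{1,2}(X,\rho^{1-2\gamma})}$ are finite (the weight $\rho^{1-2\gamma}$ is integrable near $M$ because $1-2\gamma>-1$, and $u,\nabla u$ are bounded on the compact $\overline X$), it suffices to establish $c\,\|u\|^*_{W^{1,2}(X,\rho^{1-2\gamma})}\le\|u\|_{W^{1,2}(X,\rho^{1-2\gamma})}\le C\,\|u\|^*_{W^{1,2}(X,\rho^{1-2\gamma})}$ for all $u\in\mathcal C^\infty(\overline X)$; the two completions then coincide as a single Banach space with equivalent norms. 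Throughout I use the geodesic defining function $\rho$ of Proposition \ref{special defining function proposition}, a collar $M^n\times(0,\delta)\subset X^{n+1}$, and the expansion $g=d\rho^2[1+O(\rho^{2\gamma})]+\hat h[1+O(\rho^{2\gamma})]$; shrinking $\delta$, on the collar the density $dv_g$ and the quantity $|\nabla u|^2_g$ are uniformly comparable to the product quantities $d\rho\,d\sigma_{\hat h}$ and $|\partial_\rho u|^2+|\nabla_{\hat h}u|^2$, while on the compact interior $K:=X\setminus(M^n\times(0,\delta))$ the weight $\rho^{1-2\gamma}$ is bounded between two positive constants.

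For the bound $\|u\|\le C\|u\|^*$ the only nontrivial point is the weighted trace estimate $\int_M u(\cdot,0)^2\,d\sigma_{\hat h}\le C(\|u\|^*)^2$. First I would write $u(x,0)=u(x,\tau)-\int_0^{\tau}\partial_s u(x,s)\,ds$ for $\tau\in(0,\delta)$, square, and apply Cauchy--Schwarz with the splitting $1=s^{(2\gamma-1)/2}\cdot s^{(1-2\gamma)/2}$, using $\int_0^{\tau}s^{2\gamma-1}\,ds=\tau^{2\gamma}/(2\gamma)<\infty$ (this uses $\gamma>0$), to get
\begin{equation*}
u(x,0)^2\le 2\,u(x,\tau)^2+\frac{\tau^{2\gamma}}{\gamma}\int_0^{\delta}s^{1-2\gamma}\,|\partial_s u(x,s)|^2\,ds,\qquad\tau\in(0,\delta).
\end{equation*}
Rather than fixing $\tau$, I would multiply this inequality by $\tau^{1-2\gamma}$ and integrate over $\tau\in(0,\delta)$; since $\int_0^{\delta}\tau^{1-2\gamma}\,d\tau$ is a positive finite constant (here $\gamma<1$ enters) and $\int_0^\delta\tau\,d\tau=\delta^2/2$, dividing through yields $u(x,0)^2\le C\int_0^{\delta}\tau^{1-2\gamma}\bigl(u(x,\tau)^2+|\partial_\tau u(x,\tau)|^2\bigr)\,d\tau$. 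Integrating in $x$ over $M$ and invoking the collar comparabilities gives $\int_M u(\cdot,0)^2\,d\sigma_{\hat h}\le C(\|u\|^*)^2$, which is this direction.

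For the bound $\|u\|^*\le C\|u\|$ it suffices to prove the weighted Poincar\'e inequality $\int_X\rho^{1-2\gamma}u^2\,dv_g\le C\|u\|^2$. On the collar I would integrate from the boundary: $u(x,\rho)^2\le 2u(x,0)^2+\gamma^{-1}\rho^{2\gamma}\int_0^{\delta}s^{1-2\gamma}|\partial_s u|^2\,ds$; multiplying by $\rho^{1-2\gamma}$ and integrating over $M^n\times(0,\delta)$ bounds $\int_{M\times(0,\delta)}\rho^{1-2\gamma}u^2\,dv_g$ by $C\bigl(\int_M u(\cdot,0)^2\,d\sigma_{\hat h}+\int_{M\times(0,\delta)}\rho^{1-2\gamma}|\nabla u|^2_g\,dv_g\bigr)\le C\|u\|^2$; the same inequality at the fixed height $\rho=\delta$ also gives $\int_{M\times\{\delta\}}u^2\,d\sigma\le C\|u\|^2$. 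On $K$ the weight is comparable to $1$, so the problem reduces to the unweighted Poincar\'e--trace inequality $\int_K u^2\,dv_g\le C\bigl(\int_K|\nabla u|^2_g\,dv_g+\int_{\partial K}u^2\,d\sigma\bigr)$ on the compact manifold with boundary $(K,g)$, whose boundary $\partial K=M^n\times\{\delta\}$ is precisely the slice just controlled. This last inequality is standard (one may cite it, or argue by contradiction via Rellich compactness: a sequence $u_k$ with $\int_K u_k^2=1$, $\int_K|\nabla u_k|^2_g\to 0$, $\int_{\partial K}u_k^2\to 0$ would subconverge in $L^2(K)$ and on the trace to a constant vanishing on $\partial K$, hence to $0$, a contradiction). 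Adding the collar and interior estimates gives $\int_X\rho^{1-2\gamma}u^2\,dv_g\le C\|u\|^2$, finishing the proof.

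The main obstacle is the degeneracy (when $\gamma<1/2$) or blow-up (when $\gamma>1/2$) of the weight $\rho^{1-2\gamma}$ at $M$: one cannot dominate $\rho^{2\gamma-1}$ by $\rho^{1-2\gamma}$, so a naive Young's inequality in the trace step would force a Hardy-type inequality. The device that circumvents this is to keep the factor $\int_0^{\tau}s^{2\gamma-1}\,ds=\tau^{2\gamma}/(2\gamma)$ intact (finite exactly because $2\gamma>0$) and then average the resulting pointwise-in-$\tau$ inequality against the admissible weight $\tau^{1-2\gamma}\,d\tau$ (integrable exactly because $2-2\gamma>0$); this uses both endpoints of the range $\gamma\in(0,1)$ and ensures that only $\rho^{1-2\gamma}$, never $\rho^{2\gamma-1}$, appears on the right-hand side.
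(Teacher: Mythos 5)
Your proof is correct, but it takes a genuinely different route from the paper's. The paper handles the direction $\|u\|\le C\|u\|^*$ by quoting the weighted trace Sobolev inequality of Proposition \ref{weighted trace sobolev embedding} (from \cite{J-X}) and then applying H\"older's inequality on the compact $M$ to pass from the $L^{2^*}(M)$ bound to the $L^2(M)$ bound; it handles the converse direction $\|u\|^*\le C\|u\|$ by a contradiction argument: normalizing $\int_X\rho^{1-2\gamma}u_\alpha^2\,dv_g=1$ and invoking the compactness of the embedding $W^{1,2}(X,\rho^{1-2\gamma},\|\cdot\|^*)\hookrightarrow L^2(X,\rho^{1-2\gamma})$ (Proposition \ref{fractional compact embedding}, also cited) to produce a nonzero limit that must simultaneously vanish. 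You instead prove both directions constructively by one-dimensional integration in the geodesic collar, with the Cauchy--Schwarz splitting $1=s^{(2\gamma-1)/2}s^{(1-2\gamma)/2}$ and the subsequent averaging against $\tau^{1-2\gamma}\,d\tau$ correctly exploiting both endpoints of $\gamma\in(0,1)$ so that only the admissible weight $\rho^{1-2\gamma}$ ever appears on the right; the only soft ingredient you need is the standard unweighted Poincar\'e--trace inequality on the compact interior piece. What your approach buys is self-containedness and explicit constants, and it requires only an elementary $L^2$ trace bound rather than the sharp $L^{2^*}$ trace Sobolev embedding; what the paper's approach buys is brevity, since the cited embeddings are needed elsewhere in the paper anyway, at the cost of a non-constructive compactness step. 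Both arguments are sound.
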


On one hand, $\|\cdot\|$ can be controlled by $\|\cdot\|^*$. This is a easy consequence of the following two propositions. The first one is a trace Sobolev embedding on Euclidean space.

\begin{proposition}\cite{J-X}
For any $u\in \mathcal C^\infty_0(\mathbb{R}^{n+1}_+)$ we have
$$
\left(\int_{\mathbb{R}^n}|u(x,0)|^{2^*}dx\right)^{\frac{2}{2^*}}
\leq S(n,\gamma)\int_{\mathbb{R}^{n+1}_+}y^{1-2\gamma}|\nabla u(x,y)|^2dxdy
$$
where
$$
S(n,\gamma)=\frac{1}{2\pi^\gamma}
\frac{\Gamma{(\gamma)}}{\Gamma(1-\gamma)}
\frac{\Gamma{(\frac{n-2\gamma}{2})}}{{\Gamma({\frac{n+2\gamma}{2}}})}
\left(\frac{\Gamma(n)}{\Gamma(n/2)}\right)^{\frac{2\gamma}{n}}.
$$
\end{proposition}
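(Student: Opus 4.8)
The plan is to derive this sharp trace--Sobolev inequality from the sharp fractional Sobolev inequality on $\mathbb{R}^n$ via the Caffarelli--Silvestre extension, and then to account for the constants. Fix $u\in\mathcal C^\infty_0(\mathbb{R}^{n+1}_+)$ and set $f:=u(\cdot,0)\in\mathcal C^\infty_0(\mathbb{R}^n)$. Since the right-hand side of the inequality involves only $\nabla u$, I would first replace $u$ by the (unique, finite-energy) minimizer $U_f$ of $v\mapsto\int_{\mathbb{R}^{n+1}_+}y^{1-2\gamma}|\nabla v|^2\,dx\,dy$ among all extensions of $f$ in the weighted space $W^{1,2}(\mathbb{R}^{n+1}_+,y^{1-2\gamma})$; existence and uniqueness follow from Lax--Milgram, using that $y^{1-2\gamma}$ is a Muckenhoupt $A_2$ weight, and $u$ itself is an admissible competitor. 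This replacement leaves the left-hand side unchanged and only decreases the right-hand side, so it suffices to prove the inequality for $U_f$, which solves $\divergence(y^{1-2\gamma}\nabla U_f)=0$ with $U_f|_{y=0}=f$ and is given explicitly by the Poisson kernel in \eqref{Poissonformula}.

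Next I would compute the minimal energy. Multiplying the extension equation by $U_f$ and integrating by parts (the boundary term being the weighted conormal derivative supplied by the extension theory) gives
\begin{equation*}
\int_{\mathbb{R}^{n+1}_+}y^{1-2\gamma}|\nabla U_f|^2\,dx\,dy=-\int_{\mathbb{R}^n}f\,\lim_{y\to0}y^{1-2\gamma}\partial_yU_f\,dx=\kappa_\gamma\int_{\mathbb{R}^n}|\xi|^{2\gamma}|\hat f(\xi)|^2\,d\xi,
\end{equation*}
where $\kappa_\gamma>0$ is the explicit Caffarelli--Silvestre normalization relating $-\lim_{y\to0}y^{1-2\gamma}\partial_yU_f$ to $(-\Delta)^\gamma f$. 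The assertion is therefore equivalent to the fractional Sobolev inequality $\|f\|_{L^{2^*}(\mathbb{R}^n)}^2\le\mathcal S(n,\gamma)\int_{\mathbb{R}^n}|\xi|^{2\gamma}|\hat f(\xi)|^2\,d\xi$ with $S(n,\gamma)=\mathcal S(n,\gamma)/\kappa_\gamma$. This last inequality, with its sharp constant $\mathcal S(n,\gamma)$ and extremals the dilates and translates of $(1+|x|^2)^{-(n-2\gamma)/2}$, is classical and follows from Lieb's sharp Hardy--Littlewood--Sobolev inequality by duality (Cotsiolis--Tavoularis). Optimality in the trace inequality is then inherited, since this extremal $f$ is precisely the trace $w^\lambda_a$ of the bubble $U^\lambda_a$.

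The delicate point is entirely the bookkeeping of the constant: one must pin down $\kappa_\gamma$ (equivalently, the exact normalization of the Poisson kernel in \eqref{Poissonformula}, which carries the factor $\Gamma(\gamma)/\Gamma(1-\gamma)$ together with a power of $\pi$) and $\mathcal S(n,\gamma)$ (which carries $\Gamma(\tfrac{n-2\gamma}{2})/\Gamma(\tfrac{n+2\gamma}{2})$ and the dimensional factor $(\Gamma(n)/\Gamma(n/2))^{2\gamma/n}$), and then verify that the quotient $\mathcal S(n,\gamma)/\kappa_\gamma$ is exactly the stated $S(n,\gamma)$. Everything else is routine: the integration by parts is justified by cutting off near $\{y=0\}$ and at infinity within the $A_2$-weighted Sobolev calculus, and if one prefers not to invoke sharp Hardy--Littlewood--Sobolev one may first reduce to radial boundary data by Riesz symmetrization in the $x$ variables, although identifying the optimal constant then still rests on the Lieb computation. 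Since the statement is quoted from \cite{J-X}, it is of course enough in the paper simply to cite it; the above indicates how one reconstructs it.
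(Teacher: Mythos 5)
Your proposal is correct and is essentially the argument behind the cited result: the paper itself offers no proof of this proposition (it simply quotes \cite{J-X}), and the proof there is exactly the reduction you describe, from the weighted trace inequality to the sharp fractional Sobolev inequality of Lieb/Cotsiolis--Tavoularis via the Caffarelli--Silvestre extension and the energy identity for the minimizing extension. The one step you leave open, the constant bookkeeping, does check out: with $\kappa_\gamma=1/d^*_\gamma=2^{1-2\gamma}\Gamma(1-\gamma)/\Gamma(\gamma)$ and $\mathcal S(n,\gamma)=2^{-2\gamma}\pi^{-\gamma}\,\frac{\Gamma(\frac{n-2\gamma}{2})}{\Gamma(\frac{n+2\gamma}{2})}\left(\frac{\Gamma(n)}{\Gamma(n/2)}\right)^{2\gamma/n}$, the quotient $\mathcal S(n,\gamma)/\kappa_\gamma$ is precisely the stated $S(n,\gamma)$.
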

Using a standard partition of unity argument one obtains a weighted trace Sobolev inequality on an asymptotically hyperbolic manifold:

\begin{proposition}\label{weighted trace sobolev embedding}\cite{J-X}
For any $\varepsilon>0$, there exists a constant $C_\varepsilon>0$ such that
$$
\left(\int_M|u|^{2^*}\,d\sigma_{\hat{h}}\right)^{\frac{2}{2^*}}
\leq (S(n,\gamma)+\varepsilon)\int_X\rho^{1-2\gamma}|\nabla u|^2_g\,dv_g+C_\varepsilon\int_X\rho^{1-2\gamma}u^2\,dv_g.
$$
\end{proposition}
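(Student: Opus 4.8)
\emph{Proof sketch.} The plan is to globalize, by a partition of unity, the sharp Euclidean trace inequality recalled just above, arranged so that the passage from the flat half-space to $(X,g)$ only multiplies the constant $S(n,\gamma)$ by $1+o(1)$. Fix $\varepsilon>0$ and write $\xi=|dx|^2+dy^2$ for the flat metric on $\mathbb{R}^{n+1}_+$. First I would use the asymptotic form of $g$ near $M$ from Proposition~\ref{special defining function proposition} — together with $\hat h=\delta_{ij}+O(|x|^2)$ in $\hat h$-normal coordinates on $M$ and $\rho=y(1+o(1))$, where $y$ is the last Fermi coordinate — to fix a radius $r_0=r_0(\varepsilon)>0$ such that, in Fermi coordinates $\varphi_p\colon B^+_{r_0}(0)\to X$ centered at an arbitrary $p\in M$, the pulled-back data obey
\[
(1-\varepsilon_0)\,\xi\le\varphi_p^*g\le(1+\varepsilon_0)\,\xi,\qquad
\varphi_p^*\!\bigl(\rho^{1-2\gamma}\,dv_g\bigr)\le(1+\varepsilon_0)\,y^{1-2\gamma}\,dx\,dy,\qquad
\varphi_p^*d\sigma_{\hat h}\le(1+\varepsilon_0)\,dx,
\]
where $\varepsilon_0=\varepsilon_0(r_0)\to0$ as $r_0\to0$; here $\gamma\in(0,1)$ is exactly what keeps all the error exponents positive and keeps $\rho^{1-2\gamma}$ locally integrable up to $M$. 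Then I would cover the compact $\overline X$ by finitely many such half-balls $\varphi_{p_k}(B^+_{r_0}(0))$, $k=1,\dots,N$, together with one interior open set $\Omega_0$ disjoint from $M$, and pick a smooth partition $\{\eta_k\}_{k=0}^{N}$ with $\operatorname{supp}\eta_k$ inside the $k$-th set and $\sum_{k=0}^{N}\eta_k^2\equiv1$ on $\overline X$ (this is available because a smooth partition of unity $\{\alpha_k\}$ satisfies $\sum_j\alpha_j^2\ge N^{-1}>0$, so $\eta_k:=\alpha_k(\sum_j\alpha_j^2)^{-1/2}$ works).

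The key point — and the reason to square the $\eta_k$ — is that $2^*/2=n/(n-2\gamma)>1$, so $\|\cdot\|_{L^{2^*/2}(M)}$ is a genuine norm and, since $\eta_0 u$ vanishes on $M$,
\[
\Bigl(\int_M|u|^{2^*}\,d\sigma_{\hat h}\Bigr)^{2/2^*}=\bigl\|u^2\bigr\|_{L^{2^*/2}(M)}=\Bigl\|\textstyle\sum_{k=1}^{N}(\eta_k u)^2\Bigr\|_{L^{2^*/2}(M)}\le\sum_{k=1}^{N}\Bigl(\int_M|\eta_k u|^{2^*}\,d\sigma_{\hat h}\Bigr)^{2/2^*}.
\]
For each $k\ge1$ I would transport $\eta_k u$ to $\mathbb{R}^{n+1}_+$ via $\varphi_{p_k}$, extend it by zero, apply the Euclidean trace inequality, and undo the change of variables using the three comparisons above; this yields, for $u\in\mathcal C^\infty(\overline X)$,
\[
\Bigl(\int_M|\eta_k u|^{2^*}\,d\sigma_{\hat h}\Bigr)^{2/2^*}\le(1+C\varepsilon_0)\,S(n,\gamma)\int_X\rho^{1-2\gamma}|\nabla(\eta_k u)|^2_g\,dv_g.
\]
Summing over $k\ge1$, bounding by the full sum $k=0,\dots,N$, and expanding $\nabla(\eta_k u)=\eta_k\nabla u+u\nabla\eta_k$ — whose cross terms cancel identically on $\overline X$ because $\sum_k\eta_k\nabla\eta_k=\tfrac12\nabla\bigl(\sum_k\eta_k^2\bigr)=0$ — leaves
\[
\sum_{k=1}^{N}\int_X\rho^{1-2\gamma}|\nabla(\eta_k u)|^2_g\,dv_g\le\int_X\rho^{1-2\gamma}|\nabla u|^2_g\,dv_g+\int_X\rho^{1-2\gamma}\Bigl(\textstyle\sum_{k=0}^{N}|\nabla\eta_k|^2_g\Bigr)u^2\,dv_g,
\]
with $\sum_k|\nabla\eta_k|^2_g\le C$ on the compact $\overline X$. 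Combining the three displays, choosing $r_0$ so small that $(1+C\varepsilon_0)S(n,\gamma)\le S(n,\gamma)+\varepsilon$, setting $C_\varepsilon=C\,(S(n,\gamma)+\varepsilon)$, and finally removing the smoothness hypothesis on $u$ by density (a Cauchy argument in $L^{2^*}(M)$ using the inequality itself for smooth functions, recalling Definition~\ref{weighted sobolev space}) completes the argument.

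I expect the main obstacle to be the bookkeeping required to carry the \emph{optimal} constant $S(n,\gamma)$ through the localization: a crude splitting $\|u\|_{L^{2^*}(M)}\le\sum_k\|\eta_k u\|_{L^{2^*}(M)}$ would, upon squaring, generate a spurious factor $N$, and the whole purpose of using squared cutoffs together with the triangle inequality in $L^{2^*/2}$ (legitimate precisely because $2^*>2$) and the exact cancellation of the gradient cross terms is to avoid this loss. By contrast the metric-comparison step is elementary — every error there is a positive power of $r_0$, thanks to $\gamma<1$ — and the remaining ingredients (density of $\mathcal C^\infty(\overline X)$, local integrability of the degenerate weight $\rho^{1-2\gamma}$ near $M$, and the fact that the interior of $X$ never enters since the left-hand side is a pure boundary integral) are routine.
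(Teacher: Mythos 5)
Your argument is correct and is precisely the ``standard partition of unity argument'' that the paper (following \cite{J-X}) invokes without detail: localization with squared cutoffs summing to one, the triangle inequality in $L^{2^*/2}(M)$, the sharp Euclidean trace inequality after a metric comparison in Fermi coordinates, cancellation of the gradient cross terms, and density of $\mathcal C^\infty(\overline X)$. No discrepancy with the paper's (implicit) proof.
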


On the other hand, $\|\cdot\|^*$ can be controlled by $\|\cdot\|$, which is implied by the following proposition.
\begin{proposition}
For any $u\in W^{1,2}(X,\rho^{1-2\gamma})$, there exists a constant $C>0$ such that
$$
\int_X\rho^{1-2\gamma}u^2\,dv_g\leq C\left(\int_X\rho^{1-2\gamma}|\nabla u|^2_g\,dv_g+\int_Mu^2\,d\sigma_{\hat{h}}\right).
$$
\end{proposition}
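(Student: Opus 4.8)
The plan is to argue by contradiction via the direct method in the weighted space. Suppose no such constant $C$ exists. Then for every $k\in\mathbb N$ there is $u_k\in W^{1,2}(X,\rho^{1-2\gamma})$ with $\int_X\rho^{1-2\gamma}u_k^2\,dv_g=1$ while $\int_X\rho^{1-2\gamma}|\nabla u_k|^2_g\,dv_g+\int_M u_k^2\,d\sigma_{\hat h}\le 1/k$. By the equivalence of the norms \eqref{norm1} and \eqref{norm2}, the sequence $\{u_k\}$ is bounded in the Hilbert space $W^{1,2}(X,\rho^{1-2\gamma})$, so after passing to a subsequence we may assume $u_k\rightharpoonup u_\infty$ weakly; in particular $\nabla u_k\rightharpoonup\nabla u_\infty$ weakly in $L^2(X,\rho^{1-2\gamma})$, and, since the trace operator $W^{1,2}(X,\rho^{1-2\gamma})\to L^{2^*}(M)$ is bounded (Proposition \ref{weighted trace sobolev embedding} together with the norm equivalence) and $L^{2^*}(M)\hookrightarrow L^2(M)$ continuously on the compact manifold $M$, one also has $u_k|_M\rightharpoonup u_\infty|_M$ weakly in $L^2(M)$.

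The crucial point is a weighted Rellich--Kondrachov theorem: the embedding $W^{1,2}(X,\rho^{1-2\gamma})\hookrightarrow L^2(X,\rho^{1-2\gamma})$ is compact. This is where the structure of the weight enters: since $1-2\gamma\in(-1,1)$, $\rho^{1-2\gamma}$ is a Muckenhoupt $A_2$ weight on a collar $M\times(0,\delta)$ of $M$, while away from $M$ it is bounded between two positive constants because $\overline X$ is compact. Covering $\overline X$ by finitely many Fermi-coordinate charts near $M$ --- in which the problem reduces to a half-ball carrying the model weight $y^{1-2\gamma}$ --- together with ordinary interior charts, applying in each chart the classical compact embedding for $A_2$-weighted Sobolev spaces (Fabes--Kenig--Serapioni), and then patching with a partition of unity and a diagonal argument, yields the claim. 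Consequently $u_k\to u_\infty$ strongly in $L^2(X,\rho^{1-2\gamma})$, so that $\int_X\rho^{1-2\gamma}u_\infty^2\,dv_g=1$; in particular $u_\infty\not\equiv0$.

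It then remains to derive a contradiction. By weak lower semicontinuity of the (convex, strongly continuous) gradient functional, $\int_X\rho^{1-2\gamma}|\nabla u_\infty|^2_g\,dv_g\le\liminf_{k\to\infty}\int_X\rho^{1-2\gamma}|\nabla u_k|^2_g\,dv_g=0$, hence $\nabla u_\infty=0$ almost everywhere; since $X$ is connected, $u_\infty$ is a nonzero constant $c$. On the other hand $\int_M u_k^2\,d\sigma_{\hat h}\to0$ means $u_k|_M\to0$ strongly in $L^2(M)$, and by uniqueness of weak limits $u_\infty|_M=0$, i.e. the trace of the constant $c$ vanishes, forcing $c=0$ --- a contradiction, which proves the Proposition.

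The only genuinely delicate ingredient is the weighted compact embedding; everything else is soft functional analysis. If one prefers to avoid compactness altogether, a direct alternative is available: on a collar $M\times(0,\delta)$ write $u(\rho,x)=u(0,x)+\int_0^\rho\partial_s u(s,x)\,ds$ and apply Cauchy--Schwarz with the splitting $s^{1-2\gamma}\cdot s^{-(1-2\gamma)}$, using $\int_0^\rho s^{2\gamma-1}\,ds<\infty$ (which holds precisely because $\gamma>0$), to bound $\int_0^\delta\rho^{1-2\gamma}u^2\,d\rho$ by $u(0,x)^2$ plus $\int_0^\delta\rho^{1-2\gamma}(\partial_s u)^2\,ds$; integrating over $M$ via the normal form of $g$ then controls $\int_{M\times(0,\delta)}\rho^{1-2\gamma}u^2\,dv_g$ by the right-hand side, while on the compact region $\{\rho\ge\delta/2\}$, where $\rho^{1-2\gamma}$ is comparable to $1$, the ordinary Poincaré--Wirtinger inequality combined with the trace estimate of Proposition \ref{weighted trace sobolev embedding} absorbs the remaining term. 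Either route gives the stated bound.
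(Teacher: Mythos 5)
Your main argument is essentially the paper's own: normalize a contradicting sequence, extract a weak limit, use the compact embedding of $W^{1,2}(X,\rho^{1-2\gamma},\|\cdot\|^*)$ into $L^2(X,\rho^{1-2\gamma})$ to retain the unit weighted $L^2$-mass in the limit, and then kill the limit because its gradient energy and its boundary trace both vanish. Two remarks. First, where the paper simply invokes Proposition \ref{fractional compact embedding} (citing \cite{J-X,K,DiN-P-V}), you rederive the compact embedding from the Muckenhoupt $A_2$ property of $y^{1-2\gamma}$ near $M$ together with Fabes--Kenig--Serapioni in local charts; that is a legitimate, more self-contained justification of the one genuinely nontrivial ingredient. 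A small caution: do not appeal to ``the equivalence of the norms \eqref{norm1} and \eqref{norm2}'' to get boundedness of $u_k$ --- that equivalence is precisely what is being proved, so the appeal would be circular; it is also unnecessary, since $\|u_k\|^{*2}\le 1+1/k$ follows directly from the normalization and the contradiction hypothesis. Second, your closing paragraph sketches a genuinely different, compactness-free route: the one-dimensional Hardy-type estimate on the collar, exploiting $\int_0^\rho s^{2\gamma-1}\,ds<\infty$ for $\gamma>0$, controls the weighted $L^2$-mass near $M$ by the trace plus the gradient, and an interior Poincar\'e argument handles the region $\{\rho\ge\delta/2\}$. This alternative is more elementary and yields an effective constant, at the price of having to control the mean of $u$ on the interior region --- which you can do, e.g., from the already-controlled $L^2$-mass on the slab $\{\delta/2\le\rho\le\delta\}$ rather than from Poincar\'e--Wirtinger alone. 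Both routes are sound.
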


\begin{proof}
We use a contradiction argument. Thus, assume that for any $\alpha\geq1$ there exists $u_\alpha$ satisfying
$$
\int_X\rho^{1-2\gamma}u^2_\alpha \,dv_g\geq \alpha\left(\int_X\rho^{1-2\gamma}|\nabla u_\alpha|^2_g\,dv_g+\int_Mu^2_\alpha \,d\sigma_{\hat{h}}\right).
$$
Without loss of generality, we can assume that $\int_X\rho^{1-2\gamma}u^2_\alpha \,dv_g=1$. Then we have
$$
\int_X\rho^{1-2\gamma}(|\nabla u_\alpha|^2_g+u^2_\alpha)\,dv_g\leq1+\frac{1}{\alpha}.
$$
Then there exists a weakly convergent subsequence, also denoted by $\{u_\alpha\}$, such that $u_\alpha\rightharpoonup u_0$ in $W^{1,2}(X,\rho^{1-2\gamma},\|\cdot\|^*)$.

Since
$$
\lim_{\alpha\rightarrow\infty}\int_X\rho^{1-2\gamma}|\nabla u_\alpha|^2_gdv_g=0\ \ \hbox{and}
\ \ \lim_{\alpha\rightarrow\infty}\int_Mu^2_\alpha d\sigma_{\hat{h}}=0,
$$
then we get that $u_0\equiv0$. On the other hand, via the following Proposition \ref{fractional compact embedding}, the embeddig $W^{1,2}(X,\rho^{1-2\gamma},\|\cdot\|^*)\hookrightarrow L^2(X,\rho^{1-2\gamma})$ is compact. So we have
$$
\int_X\rho^{1-2\gamma}u^2_0\,dv_g=1,
$$
which contradicts the fact that $u_0\equiv0$. Then the proof is completed.
\end{proof}

\begin{proposition}\label{fractional compact embedding}\cite{J-X,K,DiN-P-V}
Let $1\leq p\leq q<\infty$ with $\frac{1}{n+1}>\frac{1}{p}-\frac{1}{q}$.
\begin{itemize}
\item[(i)] Suppose $2-2\gamma\leq p$. Then $W^{1,p}(X,\rho^{1-2\gamma},\|\cdot\|^*)$ is compactly embedded in $L^q(X,\rho^{1-2\gamma})$ if
$$
\frac{2-2\gamma}{p(n+2-2\gamma)}>\frac{1}{p}-\frac{1}{q};
$$

\item[(ii)] Suppose $2-2\gamma> p$. Then $W^{1,p}(X,\rho^{1-2\gamma},\|\cdot\|^*)$ is compactly embedded in $L^q(X,\rho^{1-2\gamma})$ if and only if
$$
\frac{1}{(n+2-2\gamma)}>\frac{1}{p}-\frac{1}{q}.
$$
\end{itemize}
\end{proposition}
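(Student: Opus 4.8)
The plan is to reduce the assertion to weighted Sobolev embeddings on bounded Euclidean domains, for which the quoted sources provide the sharp local statements, and then to patch the local estimates together by a partition of unity on the compact manifold $\overline{X}$. First I would cover $\overline X$ by finitely many coordinate charts, chosen (using compactness of $(\overline X,\overline g_*)$ and the fact that $\rho$ is a geodesic defining function) so that each chart is either an interior chart, on which $\rho$ is bounded above and bounded away from $0$, or a boundary chart, taken to be a Fermi chart $\varphi_{x_0}$ centered at some $x_0\in M$. On a boundary chart, the metric expansion in Proposition~\ref{special defining function proposition}, namely $g=d\rho^2[1+O(\rho^{2\gamma})]+\hat h[1+O(\rho^{2\gamma})]$, shows that $g$ is uniformly comparable to the flat metric $|dx|^2+dy^2$, and the expansion of $\rho$ in terms of $\rho_*$ there lets us identify $\rho$ with the last coordinate $y$ up to a uniformly bounded positive factor; consequently the weight $\rho^{1-2\gamma}$ is uniformly comparable to $y^{1-2\gamma}$ on the chart. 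Thus, up to uniformly bounded constants, evaluating $\int \rho^{1-2\gamma}(|\nabla u|_g^2+u^2)$ and $\int \rho^{1-2\gamma}|u|^q$ in a boundary chart reduces to the corresponding integrals on a half-ball $B^+_r\subset\mathbb R^{n+1}_+$ against the flat weight $y^{1-2\gamma}$.

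Next I would invoke the Euclidean results. On an interior chart the weight is trivial, so the classical Rellich--Kondrachov theorem on a ball in $\mathbb R^{n+1}$ applies, which requires exactly $\tfrac1{n+1}>\tfrac1p-\tfrac1q$. On a boundary half-ball one uses the weighted Sobolev inequality and compact embedding for the weight $y^{1-2\gamma}$; note $1-2\gamma\in(-1,1)$, so $y^{1-2\gamma}$ is a Muckenhoupt weight, and the condition $2-2\gamma\le p$ (resp.\ $>p$) is precisely the threshold separating the regime in which the weight is subordinate to the integrability exponent $p$ from the regime in which it governs the embedding. In the first regime the effective exponent produces the bound $\tfrac{2-2\gamma}{p(n+2-2\gamma)}>\tfrac1p-\tfrac1q$, while in the second the homogeneous dimension $N=n+2-2\gamma$ attached to the anisotropic scaling $(x,y)\mapsto(\lambda x,\lambda y)$ yields the sharp bound $\tfrac1{n+2-2\gamma}>\tfrac1p-\tfrac1q$; both are the content of \cite{J-X,K,DiN-P-V} and I would simply quote them.

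With these ingredients in hand, take a sequence $\{u_k\}$ bounded in $W^{1,p}(X,\rho^{1-2\gamma},\|\cdot\|^*)$ and let $\{\chi_i\}_{i=1}^{N_0}$ be a partition of unity subordinate to the cover. Each $\chi_i u_k$ is bounded in the local weighted Sobolev space of its chart, hence by the Euclidean results admits a subsequence converging in $L^q$ of that chart. A finite diagonal extraction produces a single subsequence along which every $\chi_i u_k$ converges in $L^q(X,\rho^{1-2\gamma})$, and summing $u_k=\sum_i\chi_i u_k$ shows $u_k$ converges in $L^q(X,\rho^{1-2\gamma})$; this establishes compactness whenever the stated inequalities hold. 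For the ``only if'' in~(ii), I would test compactness against a concentrating family: the rescalings $v_\varepsilon(x,y)=\varepsilon^{-(N-p)/p}v(x/\varepsilon,y/\varepsilon)$ of a fixed bump $v\in\mathcal C^\infty_0(\overline{\mathbb R^{n+1}_+})$, transplanted through a Fermi chart near a point of $M$; this family stays bounded in $W^{1,p}(X,\rho^{1-2\gamma},\|\cdot\|^*)$ but its $L^q(X,\rho^{1-2\gamma})$ norm does not tend to $0$ once $\tfrac1{n+2-2\gamma}\le\tfrac1p-\tfrac1q$, and no subsequence can converge in $L^q$, contradicting the compact embedding.

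The main obstacle is the boundary analysis: one must verify that the quoted Euclidean weighted embeddings are indeed sharp with exactly the stated thresholds, and that the comparisons $\rho^{1-2\gamma}\asymp y^{1-2\gamma}$ and $g\asymp|dx|^2+dy^2$ near $M$ — controlled only to order $O(\rho^{2\gamma})$ by Proposition~\ref{special defining function proposition} — are strong enough to transfer not merely the Sobolev \emph{inequality} but the \emph{compactness} of the embedding, uniformly over the finitely many boundary charts. The interior patching, the diagonal extraction, and the concentration counterexample are then routine.
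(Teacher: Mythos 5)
The paper does not prove this proposition at all: it is stated as a known result and attributed to \cite{J-X,K,DiN-P-V}, so there is no in-paper argument to compare against. Your outline is the standard (and correct) way to pass from those references to the manifold statement: localize by a finite cover of $\overline X$ into interior charts (where $\rho^{1-2\gamma}$ is trivial and Rellich--Kondrachov gives $\tfrac1{n+1}>\tfrac1p-\tfrac1q$) and Fermi boundary charts (where $g\asymp |dx|^2+dy^2$ and $\rho\asymp y$ with uniform constants, so the weighted norms are equivalent to the flat ones with weight $y^{1-2\gamma}$), then quote the sharp weighted embeddings on half-balls, patch with a partition of unity, and diagonalize. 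The quasi-isometry of norms does transfer compactness, not just boundedness, so that step is fine, and your scaling family $v_\varepsilon$ with homogeneous dimension $N=n+2-2\gamma$ is the right witness for the ``only if'' in (ii): the gradient term is scale-invariant, the weighted $L^p$ term is $O(\varepsilon^p)$, and the weighted $L^q$ norm fails to vanish exactly when $\tfrac1p-\tfrac1q\ge\tfrac1N$. The one honest caveat — which you flag yourself — is that the substantive content (the exact thresholds $2-2\gamma\lessgtr p$ and the two inequalities in (i) and (ii), for a weight that is a power of the distance to only the flat part of the half-ball boundary) is still being imported from Kufner's book rather than proved; since that is precisely the paper's own level of treatment, your proposal should be read as a correct reduction to the cited Euclidean results rather than a self-contained proof.
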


We will always use the norm in $W^{1,2}(X,\rho^{1-2\gamma})$ in the following unless otherwise stated.

\begin{definition}
$\overline{W}^{1,2}(X,\rho^{1-2\gamma})$  is the closure of $\mathcal C^\infty_0(X)$ in $W^{1,2}(X,\rho^{1-2\gamma})$ with the norm
$$
\|u\|_{\overline{W}^{1,2}(X,\rho^{1-2\gamma})}=\left(\int_X \rho^{1-2\gamma}|\nabla u|^2_g\,dv_g\right)^{\frac{1}{2}}.
$$
\end{definition}

 Now we define Palais-Smale sequences for the functional \eqref{funcional} precisely.

\begin{definition} \label{palais-smale}
$\{u_\alpha\}_{\alpha\in \mathbb{N}}\subset W^{1,2}(X,\rho^{1-2\gamma})$ is called a Palais-Smale sequence for
$\{I^{\gamma,\alpha}_g\}_{\alpha\in \mathbb{N}}$ if:
\begin{itemize}
\item[(i)] $\{I^{\gamma,\alpha}_g(u_\alpha)\}_{\alpha\in \mathbb{N}}$ is uniformly bounded; and

\item[(ii)] as $\alpha\rightarrow+\infty$,
$$DI^{\gamma,\alpha}_g(u_\alpha)\rightarrow 0\text{ strongly in }W^{1,2}(X,\rho^{1-2\gamma})',$$ where we have defined $W^{1,2}(X,\rho^{1-2\gamma})'$  as the dual space of $W^{1,2}(X,\rho^{2\gamma-1})$, i.e. for any $\phi\in W^{1,2}(X,\rho^{1-2\gamma})$, then
\begin{equation}\label{PS1}\begin{split}
DI^{\gamma,\alpha}_g(u_\alpha)\cdot\phi
=&\int_{X}\rho^{1-2\gamma}\langle\nabla u_\alpha,\nabla \phi\rangle_g\,dv_g
+\int_MQ^\gamma_\alpha u_\alpha\phi \, d\sigma_{\hat{h}}
-\int_M u_\alpha^{2^*-1}\phi \,d\sigma_{\hat{h}}\\
=&\,o(\|\phi\|_{W^{1,2}(X,\rho^{1-2\gamma})}) \ \ \hbox{as}\ \alpha\rightarrow+\infty.
\end{split}\end{equation}
\end{itemize}
\end{definition}

The main properties of Palais-Smale sequences are contained in the next several lemmas:

\begin{lemma}\label{bounded lemma}
Let $\{u_\alpha\}_{\alpha\in \mathbb{N}}\subset W^{1,2}(X,\rho^{1-2\gamma})$ be a Palais-Smale sequence for the functionals $\{I^{\gamma,\alpha}_g\}_{\alpha\in \mathbb{N}}$,
then $\{u_\alpha\}_{\alpha\in \mathbb{N}}$ is uniformly bounded in $W^{1,2}(X,\rho^{1-2\gamma})$.
\end{lemma}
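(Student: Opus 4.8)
The plan is to run the standard Struwe-type boundedness argument, combining the energy bound (item (i) of the Palais–Smale condition) with the derivative decay (item (ii)), and then upgrading control of the norm $\|\cdot\|_{W^{1,2}(X,\rho^{1-2\gamma})}$ from control of $\|\cdot\|_{\overline{W}^{1,2}(X,\rho^{1-2\gamma})}$ together with the $L^2(M,\hat h)$ norm. First I would test $DI^{\gamma,\alpha}_g(u_\alpha)$ against $\phi=u_\alpha$ itself: by \eqref{PS1} this gives
$$
\int_X\rho^{1-2\gamma}|\nabla u_\alpha|^2_g\,dv_g+\int_M Q^\gamma_\alpha u_\alpha^2\,d\sigma_{\hat h}-\int_M u_\alpha^{2^*}\,d\sigma_{\hat h}=o\big(\|u_\alpha\|_{W^{1,2}(X,\rho^{1-2\gamma})}\big).
$$
Then I would form the combination $I^{\gamma,\alpha}_g(u_\alpha)-\tfrac1{2^*}DI^{\gamma,\alpha}_g(u_\alpha)\cdot u_\alpha$; since the nonlinear terms have exponents $2$ and $2^*$, the coefficient $\tfrac12-\tfrac1{2^*}=\tfrac{\gamma}{n}>0$ survives in front of $\int_X\rho^{1-2\gamma}|\nabla u_\alpha|^2_g\,dv_g$ and in front of $\int_M Q^\gamma_\alpha u_\alpha^2\,d\sigma_{\hat h}$, while the top-order term cancels. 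Using $|I^{\gamma,\alpha}_g(u_\alpha)|\le C$ and the uniform bound $|Q^\gamma_\alpha|\le C$ this yields
$$
\frac{\gamma}{n}\int_X\rho^{1-2\gamma}|\nabla u_\alpha|^2_g\,dv_g\le C+C\int_M u_\alpha^2\,d\sigma_{\hat h}+o\big(\|u_\alpha\|_{W^{1,2}(X,\rho^{1-2\gamma})}\big).
$$

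Next I would absorb the boundary $L^2$ term. The danger is that $\int_M u_\alpha^2\,d\sigma_{\hat h}$ on the right is not obviously small; to handle it I would interpolate, bounding $\int_M u_\alpha^2\,d\sigma_{\hat h}$ by $\varepsilon\big(\int_M|u_\alpha|^{2^*}\,d\sigma_{\hat h}\big)^{2/2^*}+C_\varepsilon$ via Young's inequality (since $2<2^*$ and $M$ has finite measure), and then controlling $\big(\int_M|u_\alpha|^{2^*}\,d\sigma_{\hat h}\big)^{2/2^*}$ by the weighted trace Sobolev inequality of Proposition \ref{weighted trace sobolev embedding}, namely by $(S(n,\gamma)+\varepsilon)\int_X\rho^{1-2\gamma}|\nabla u_\alpha|^2_g\,dv_g+C_\varepsilon\int_X\rho^{1-2\gamma}u_\alpha^2\,dv_g$. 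The $\rho^{1-2\gamma}u_\alpha^2$ term is in turn controlled by $\int_X\rho^{1-2\gamma}|\nabla u_\alpha|^2_g\,dv_g+\int_M u_\alpha^2\,d\sigma_{\hat h}$ by the Proposition following Definition \ref{weighted sobolev space}. Choosing $\varepsilon$ small enough, all the gradient and boundary-$L^2$ terms on the right can be absorbed into the left, giving $\int_X\rho^{1-2\gamma}|\nabla u_\alpha|^2_g\,dv_g\le C+o(\|u_\alpha\|_{W^{1,2}(X,\rho^{1-2\gamma})})$, and then, feeding this back, also $\int_M u_\alpha^2\,d\sigma_{\hat h}\le C+o(\|u_\alpha\|)$.

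Finally, adding these two estimates gives $\|u_\alpha\|^2_{W^{1,2}(X,\rho^{1-2\gamma})}\le C+o(\|u_\alpha\|_{W^{1,2}(X,\rho^{1-2\gamma})})$, which, by the elementary fact that $t^2\le C+o(t)$ forces $t$ bounded, yields the claim. The main obstacle is the circularity in the absorption step: the boundary $L^2$ term, the weighted bulk $L^2$ term, and the Dirichlet energy are all intertwined through the trace and Poincaré-type inequalities, so one must track the constants carefully and pick the interpolation parameter $\varepsilon$ after seeing all of them, making sure the coefficient $\gamma/n$ coming from $\tfrac12-\tfrac1{2^*}$ is genuinely strictly positive (which it is, since $\gamma\in(0,1)$ and $n\ge3$) and large enough relative to the trace constant $S(n,\gamma)$ after the $\varepsilon$-perturbations. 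Once the bookkeeping is set up, no delicate PDE input beyond the already-stated embedding propositions is needed.
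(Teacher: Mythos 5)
Your opening is the same as the paper's: testing $DI^{\gamma,\alpha}_g(u_\alpha)$ against $u_\alpha$ gives $A_\alpha:=\int_X\rho^{1-2\gamma}|\nabla u_\alpha|^2_g\,dv_g+\int_MQ^\gamma_\alpha u_\alpha^2\,d\sigma_{\hat h}=\int_Mu_\alpha^{2^*}\,d\sigma_{\hat h}+o(\|u_\alpha\|)$, and the algebra $\tfrac12-\tfrac1{2^*}=\tfrac{\gamma}{n}$ is correct. The gap is in the absorption step. The interpolation inequality you invoke,
$$
\int_Mu_\alpha^2\,d\sigma_{\hat h}\le\varepsilon\Bigl(\int_M|u_\alpha|^{2^*}\,d\sigma_{\hat h}\Bigr)^{2/2^*}+C_\varepsilon,
$$
is false for small $\varepsilon$: both sides' leading terms are homogeneous of degree $2$ under $u\mapsto\lambda u$, so replacing $u$ by $\lambda v$, dividing by $\lambda^2$ and letting $\lambda\to\infty$ would force $\|v\|^2_{L^2(M)}\le\varepsilon\|v\|^2_{L^{2^*}(M)}$ for every $v$, which fails for $v\equiv1$. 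One cannot trade the fixed H\"{o}lder constant $|M|^{2\gamma/n}$ for an arbitrarily small coefficient at the price of an additive constant when the two quantities scale identically. The version of Young's inequality that is true, $\int_Mu_\alpha^2\le\varepsilon\int_M|u_\alpha|^{2^*}+C_\varepsilon|M|$, produces the full $2^*$-th power of the trace norm, which Proposition \ref{weighted trace sobolev embedding} cannot convert into something linear in the Dirichlet energy; so the absorption does not close either way, and your worry about $\gamma/n$ versus $S(n,\gamma)$ is a symptom of this: no choice of parameters rescues the scheme.

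The fix is short and is what the paper does. Your identity $I^{\gamma,\alpha}_g(u_\alpha)-\tfrac1{2^*}DI^{\gamma,\alpha}_g(u_\alpha)\cdot u_\alpha=\tfrac{\gamma}{n}A_\alpha$, combined with $A_\alpha=\int_Mu_\alpha^{2^*}\,d\sigma_{\hat h}+o(\|u_\alpha\|)$, gives $I^{\gamma,\alpha}_g(u_\alpha)=\tfrac{\gamma}{n}\int_Mu_\alpha^{2^*}\,d\sigma_{\hat h}+o(\|u_\alpha\|)$, so the uniform energy bound yields $\int_Mu_\alpha^{2^*}\,d\sigma_{\hat h}\le C+o(\|u_\alpha\|)$ directly, with no absorption. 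Plain H\"{o}lder (fixed constant) then gives $\int_Mu_\alpha^2\,d\sigma_{\hat h}\le C+o(\|u_\alpha\|^{2/2^*})$, and the gradient term is recovered from $\int_X\rho^{1-2\gamma}|\nabla u_\alpha|^2_g\,dv_g\le A_\alpha+C\int_Mu_\alpha^2\,d\sigma_{\hat h}$. Summing gives $\|u_\alpha\|^2\le C+o(\|u_\alpha\|)+o(\|u_\alpha\|^{2/2^*})$, whence boundedness exactly as in your final step. In short: bound the boundary $L^{2^*}$ norm first from the energy bound, then deduce the boundary $L^2$ norm; do not try to go the other way around.
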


\begin{proof}
We can take  $\phi=u_\alpha\in W^{1,2}(X,\rho^{1-2\gamma})$ as a test function in (ii) of Definition \ref{palais-smale}, then we get
$$
\int_X \rho^{1-2\gamma}|\nabla u_\alpha|^2_g\,dv_g+\int_MQ_\alpha^\gamma u^2_\alpha \,d\sigma_{\hat{h}}
=\int_Mu_\alpha^{2^*}d\sigma_{\hat{h}}+o(\|u_\alpha\|_{W^{1,2}(X,\rho^{1-2\gamma})}),
$$
which yields that
\begin{equation*}\begin{split}
I^{\gamma,\alpha}_g(u_\alpha)
&=\frac{1}{2}\int_X\rho^{1-2\gamma}|\nabla u_\alpha|_g^2\,dv_g
+\frac{1}{2}\int_MQ_\alpha^\gamma u_\alpha^2\,d\sigma_{\hat{h}}
-\frac{1}{2^*}\int_Mu_\alpha^{2^*}\,d\sigma_{\hat{h}}\\
&=\frac{\gamma}{n}\int_Mu_\alpha^{2^*}\,d\sigma_{\hat{h}}+o(\|u_\alpha\|_{W^{1,2}(X,\rho^{1-2\gamma})}).
\end{split}\end{equation*}
Since $\{I^{\gamma,\alpha}_g(u_\alpha)\}_{\alpha\in\mathbb{N}}$ is uniformly bounded by (i) of Definition \ref{palais-smale}, there exists a constant $C>0$ such that

$$
\int_Mu_\alpha^{2^*}d\sigma_{\hat{h}}\leq C+o(\|u_\alpha\|_{W^{1,2}(X,\rho^{1-2\gamma})}),
$$
which by H\"{o}lder's inequality yields
$$
\int_Mu_\alpha^2\,d\sigma_{\hat{h}}
 \leq C\left( \int_Mu_\alpha^{2^*}\,d\sigma_{\hat{h}}\right)^{2/2^*}
 \leq C+o(\|u_\alpha\|^{2/2^*}_{W^{1,2}(X,\rho^{1-2\gamma})}).
$$
Note that since $|Q^\gamma_\alpha|\leq C$ for some constant $C>0$, we can choose sufficiently large $C_1>0$ such that $C_1+Q^\gamma_\alpha\geq1$ on $M$. It follows
\begin{equation*}\begin{split}
\|u_\alpha\|^2_{W^{1,2}(X,\rho^{1-2\gamma})}
&= \int_X \rho^{1-2\gamma}|\nabla u_\alpha|^2_g\,dv_g+\int_Mu^2_\alpha\, d\sigma_{\hat{h}}\\
&\leq\int_X \rho^{1-2\gamma}|\nabla u_\alpha|^2_g\,dv_g+\int_MQ_\alpha^\gamma u^2_\alpha \,d\sigma_{\hat{h}}+C_1\int_ Mu^2_\alpha \,d\sigma_{\hat{h}}\\
&\leq\int_Mu^{2^*}_\alpha \,d\sigma_{\hat{h}}+o(\|u_\alpha\|_{W^{1,2}(X,\rho^{1-2\gamma})})+C
+o(\|u_\alpha\|^{2/2^*}_{W^{1,2}(X,\rho^{1-2\gamma})})\\
&\leq C+o(\|u_\alpha\|_{W^{1,2}(X,\rho^{1-2\gamma})})+o(\|u_\alpha\|^{2/2^*}_{W^{1,2}(X,\rho^{1-2\gamma})}).
\end{split}\end{equation*}
which concludes that $\{u_\alpha\}_{\alpha\in\mathbb{N}}$ is uniformly bounded in $W^{1,2}(X,\rho^{1-2\gamma})$ since $2/2^*<1$.  The proof is finished.
\end{proof}

\begin{remark}\label{remark on limit function}
From Lemma \ref{bounded lemma}, it is easy to see that there exists a function $u^0$ in $W^{1,2}(X,\rho^{1-2\gamma})$ such that
$u_\alpha \rightharpoonup u^0$ weakly in $W^{1,2}(X,\rho^{1-2\gamma})$ as $\alpha\rightarrow+\infty$.
\end{remark}

\begin{proposition}\label{u0 property}
$u^0\geq0$ in $\overline{X}$.
\end{proposition}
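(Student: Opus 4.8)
The plan is to show that the weak limit $u^0$ of the Palais--Smale sequence $\{u_\alpha\}$ (which exists by Lemma~\ref{bounded lemma} and Remark~\ref{remark on limit function}) is nonnegative by passing to the limit in the weak formulation. Since each $u_\alpha\geq0$ is assumed, the difficulty is only that $W^{1,2}(X,\rho^{1-2\gamma})$ convergence is weak, so pointwise positivity is not immediate; one needs a compactness argument to recover it.

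First I would test the defining relation \eqref{PS1} against the negative part $\phi=u^0_-:=\max\{-u^0,0\}$, which indeed belongs to $W^{1,2}(X,\rho^{1-2\gamma})$ (the space is closed under taking positive/negative parts, as it contains $|u|$ together with $u$). Applying $DI^{\gamma,\alpha}_g(u_\alpha)\cdot u^0_-=o(\|u^0_-\|)$ and using the weak convergence $u_\alpha\rightharpoonup u^0$ in the $\rho^{1-2\gamma}$-weighted gradient pairing, one gets in the limit
$$
\int_X\rho^{1-2\gamma}\langle\nabla u^0,\nabla u^0_-\rangle_g\,dv_g+\int_M Q^\gamma_\infty u^0 u^0_-\,d\sigma_{\hat h}-\int_M (u^0)^{2^*-1}_+ u^0_-\,d\sigma_{\hat h}=0,
$$
provided each term converges. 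The gradient term is the delicate one: weak convergence only gives $\int\rho^{1-2\gamma}\langle\nabla u_\alpha,\nabla u^0_-\rangle\to\int\rho^{1-2\gamma}\langle\nabla u^0,\nabla u^0_-\rangle$ directly, which is fine since $u^0_-$ is a fixed test function. The boundary terms converge because the trace embedding $W^{1,2}(X,\rho^{1-2\gamma})\hookrightarrow L^{2^*}(M)$ is continuous (Proposition~\ref{weighted trace sobolev embedding}) and in fact compact into $L^2(M)$, while $Q^\gamma_\alpha\to Q^\gamma_\infty$ in $L^2(M)$ with a uniform bound, so $\int_M Q^\gamma_\alpha u_\alpha u^0_-\to\int_M Q^\gamma_\infty u^0 u^0_-$; similarly for the nonlinear term using compactness of the trace into $L^p(M)$ for $p<2^*$ and the uniform $L^{2^*}$ bound on $u_\alpha$ to control the critical exponent. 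Since $u_\alpha\geq0$, the term $\int_M u_\alpha^{2^*-1}u^0_-\,d\sigma_{\hat h}\geq0$, and upon taking the limit the last displayed term has a definite sign.

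Now observe that $\nabla u^0\cdot\nabla u^0_-=-|\nabla u^0_-|^2$ and $u^0 u^0_-=-(u^0_-)^2$ pointwise, and $(u^0)^{2^*-1}_+ u^0_- =0$ since the supports of $u^0_+$ and $u^0_-$ are essentially disjoint. Hence the identity reduces to
$$
-\int_X\rho^{1-2\gamma}|\nabla u^0_-|^2_g\,dv_g-\int_M Q^\gamma_\infty (u^0_-)^2\,d\sigma_{\hat h}\ \geq\ 0
$$
after accounting for the sign of the (nonnegative) nonlinear term that was dropped in the limit inequality. To turn this into $u^0_-\equiv0$ I would invoke positivity of the associated quadratic form: by hypothesis we are in the positive-curvature case $C_\gamma>0$, so $\Lambda_\gamma(M,[\hat h])>0$ and the form $v\mapsto d^*_\gamma\int_X\rho^{1-2\gamma}|\nabla v|^2_g+\int_M Q^{\hat h}_\gamma v^2$ is coercive; for the perturbed potential $Q^\gamma_\infty$ one argues as in Lemma~\ref{bounded lemma} (choosing $C_1$ so that $C_1+Q^\gamma_\infty\geq1$) together with the compact embedding $W^{1,2}(X,\rho^{1-2\gamma})\hookrightarrow L^2(M)$ to get a Poincaré-type inequality $\int_X\rho^{1-2\gamma}|\nabla v|^2+\int_M Q^\gamma_\infty v^2\geq c\|v\|^2_{W^{1,2}}$ on the relevant subspace, or else just note directly from the inequality above that $\int_X\rho^{1-2\gamma}|\nabla u^0_-|^2\leq -\int_M Q^\gamma_\infty(u^0_-)^2\leq C\int_M(u^0_-)^2$, combined with a maximum-principle/unique-continuation argument for the degenerate equation $-\divergence(\rho^{1-2\gamma}\nabla u^0_-)=0$ in $X$ with the Neumann condition forcing $u^0_-=0$. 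The cleanest route is the direct one: the limit inequality gives $\int_X\rho^{1-2\gamma}|\nabla u^0_-|^2_g\,dv_g+\int_M Q^\gamma_\infty (u^0_-)^2\,d\sigma_{\hat h}\leq 0$, and I would then show the left side is bounded below by $c\|u^0_-\|^2_{W^{1,2}(X,\rho^{1-2\gamma})}$ for some $c>0$, hence $u^0_-=0$, i.e. $u^0\geq0$.

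The main obstacle is justifying the passage to the limit in the nonlinear boundary term $\int_M u_\alpha^{2^*-1}u^0_-\,d\sigma_{\hat h}$ at the \emph{critical} exponent and, more importantly, establishing the coercivity of the quadratic form with the perturbed, merely-$L^2$ potential $Q^\gamma_\infty$ — the sign of $Q^\gamma_\infty$ is not assumed, so one cannot simply drop it, and one must genuinely exploit either $\Lambda_\gamma>0$ (which controls only the unperturbed form) via a perturbation/compactness argument, or a unique continuation principle for the degenerate operator. I expect this coercivity step to require the most care, while the weak-limit bookkeeping is routine given Propositions~\ref{weighted trace sobolev embedding} and \ref{fractional compact embedding}.
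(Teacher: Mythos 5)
Your proposal has a genuine gap, and it is exactly the one you flag at the end: the coercivity of the quadratic form $v\mapsto\int_X\rho^{1-2\gamma}|\nabla v|^2_g\,dv_g+\int_M Q^\gamma_\infty v^2\,d\sigma_{\hat h}$ is not available from the hypotheses. The only standing assumptions on $Q^\gamma_\alpha$ are a uniform $L^\infty$ bound and $L^2$ convergence to $Q^\gamma_\infty$; nothing prevents $Q^\gamma_\infty$ from being negative enough that this form is indefinite. The positivity $\Lambda_\gamma(M,[\hat h])>0$ controls only the \emph{unperturbed} form with potential $Q^{\hat h}_\gamma$, and the trick in Lemma~\ref{bounded lemma} of adding $C_1\int_M v^2$ to make $C_1+Q^\gamma_\infty\geq 1$ does not help here, because in your final inequality you would then need to absorb $C_1\int_M (u^0_-)^2$, which has the wrong sign. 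So from $\int_X\rho^{1-2\gamma}|\nabla u^0_-|^2_g\,dv_g+\int_M Q^\gamma_\infty(u^0_-)^2\,d\sigma_{\hat h}\leq 0$ you cannot conclude $u^0_-=0$; a unique continuation or maximum principle for the degenerate operator would also not close this, since the obstruction is spectral, not a regularity issue. Moreover, note that your argument uses the pointwise constraint $u_\alpha\geq 0$ only through the sign of $\int_M u_\alpha^{2^*-1}u^0_-\,d\sigma_{\hat h}$, which is too weak an encoding of that information: the statement is false for general sign-changing critical points of the same functional, so no purely variational identity of this type can prove it.

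The paper's proof is much more elementary and avoids all of this. By Proposition~\ref{weighted trace sobolev embedding} the trace embedding gives $u_\alpha\to u^0$ strongly in $L^2(M,\hat h)$, hence a.e.\ on $M$ along a subsequence, so $u^0\geq 0$ on $M$; by Proposition~\ref{fractional compact embedding} and the equivalence of the two norms, $u_\alpha\to u^0$ strongly in $L^2(X,\rho^{1-2\gamma})$, and since $\rho^{1-2\gamma}$ is bounded below on interior balls this yields a.e.\ convergence in $X$, so $u^0\geq 0$ in $X$ as well. In short: upgrade weak convergence to a.e.\ convergence via compactness and pass the pointwise inequality $u_\alpha\geq 0$ to the limit directly. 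You should replace your variational argument with this compactness argument.
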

\begin{proof}
Using Proposition \ref{weighted trace sobolev embedding}, we can easily get that $u_\alpha\rightarrow u^0$ in $L^2(M,\hat{h})$ as $\alpha\rightarrow +\infty$, so furthermore we have $u_\alpha\rightarrow u^0$ almost everywhere on $M$. Noting that $u_\alpha\geq0$ on $M$, then we obtain that $u^0\geq0$ on $M$. On the other hand, by Proposition \ref{fractional compact embedding},
 and the equivalence of the norms $\|\cdot\|$ and $\|\cdot\|^*$, we have $u_\alpha \rightarrow u^0$ in $L^2(X,\rho^{1-2\gamma})$ as $\alpha\rightarrow+\infty$. For any $z\in X$, take $d_z<$ dist$(z,M)$, then we also have $u_\alpha\rightarrow u^0$ in $L^2(\mathfrak{B}^+_{d_z}(z),\rho^{1-2\gamma})$. Since $\rho^{1-2\gamma}$ is bounded below by a positive constant in $\mathfrak{B}^+_{d_z}(z)$,  we get $u_\alpha\rightarrow u^0$ almost everywhere in $\mathfrak{B}^+_{d_z}(z)$ up to passing to a subsequence. Noting that $u_\alpha\geq0$ in $X$, we obtain $u^0\geq0$ in $\mathfrak{B}^+_{d_z}(z)$. Since $z$ is arbitrary in $X$, then $u^0\geq0$ in $X$. Combining the above arguments, we conclude that $u\geq0$ in $\overline{X}$.
\end{proof}

Next we define the two limit functionals
$$
I^\gamma_g(u)=\frac{1}{2}\int_X\rho^{1-2\gamma}|\nabla u|_g^2\,dv_g-\frac{1}{2^*}\int_M|u|^{2^*}d\sigma_{\hat{h}}
$$
and
$$
I^{\gamma,\infty}_g(u)=\frac{1}{2}\int_X\rho^{1-2\gamma}|\nabla u|_g^2\,dv_g+\frac{1}{2}\int_MQ^\gamma_\infty u^2\,d\sigma_{\hat{h}}-\frac{1}{2^*}\int_M|u|^{2^*}d\sigma_{\hat{h}}.
$$
We have the following lemma:

\begin{lemma}\label{limit lemma}
 Let $\{u_\alpha\}_{\alpha\in \mathbb{N}}\subset W^{1,2}(X,\rho^{1-2\gamma})$ be a Palais-Smale sequence for $\{I^{\gamma,\alpha}_g\}_{\alpha\in\mathbb{N}}$, and $u_\alpha\rightharpoonup u^0$ weakly in $W^{1,2}(X,\rho^{1-2\gamma})$ as $\alpha\rightarrow+\infty$. We also denote $\hat{u}_\alpha=u_\alpha- u^0\in W^{1,2}(X,\rho^{1-2\gamma})$. Then
\begin{itemize}
\item[(i)] $u^0$ is a nonnegative weak solution to the limit equation
\begin{equation}\label{limit eq}
    \left\{
  \begin{split}
    -\divergence(\rho^{1-2\gamma}\nabla u)=0 & \ \hbox{ in } \ X,\\
    -\lim_{\rho\rightarrow0}\rho^{1-2\gamma}\partial_\rho u+Q^\gamma_\infty u=u^{2^*-1}   & \ \hbox{ on } \ M;\
  \end{split}
\right.
\end{equation}

\item[(ii)]  $I^{\gamma,\alpha}_g(u_\alpha)=I^\gamma_g(\hat{u}_\alpha)+I^{\gamma,\infty}_g(u^0)+o(1)$ as $\alpha \rightarrow+ \infty$;

\item[(iii)]  $\{\hat{u}_\alpha\}_{\alpha\in \mathbb{N}}$ is a Palais-Smale sequence for $I^\gamma_g$.
\end{itemize}
\end{lemma}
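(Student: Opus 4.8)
The plan is to follow the classical scheme of Struwe \cite{S} and Druet--Hebey--Robert \cite{D-H-R}: pass to the weak limit in the Palais--Smale identity \eqref{PS1}, and then reorganize both the energy and the differential by means of Brezis--Lieb type decompositions on the boundary $M$. The standing facts I will use are: $\{u_\alpha\}$ is bounded in $W^{1,2}(X,\rho^{1-2\gamma})$ (Lemma \ref{bounded lemma}) and $u_\alpha\rightharpoonup u^0$ there, so $\hat u_\alpha:=u_\alpha-u^0\rightharpoonup 0$; $u_\alpha\to u^0$ strongly in $L^2(M,\hat h)$ and a.e.\ on $M$ (proof of Proposition \ref{u0 property}); the trace map $W^{1,2}(X,\rho^{1-2\gamma})\hookrightarrow L^{2^*}(M)$ is continuous (Proposition \ref{weighted trace sobolev embedding}); and, writing $(2^*)'=\tfrac{2n}{n+2\gamma}$ for the conjugate exponent, $L^2(M)\hookrightarrow L^{(2^*)'}(M)$ on the compact manifold $M$. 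For (i), I fix $\phi\in W^{1,2}(X,\rho^{1-2\gamma})$ and send $\alpha\to+\infty$ in each term of \eqref{PS1}: the weighted Dirichlet term converges to $\int_X\rho^{1-2\gamma}\langle\nabla u^0,\nabla\phi\rangle_g\,dv_g$ by the definition of weak convergence; splitting $Q^\gamma_\alpha u_\alpha-Q^\gamma_\infty u^0=Q^\gamma_\alpha(u_\alpha-u^0)+(Q^\gamma_\alpha-Q^\gamma_\infty)u^0$ and using $|Q^\gamma_\alpha|\le C$, $u_\alpha\to u^0$ in $L^2(M)$, $Q^\gamma_\alpha\to Q^\gamma_\infty$ in $L^2(M)$ together with dominated convergence gives $Q^\gamma_\alpha u_\alpha\to Q^\gamma_\infty u^0$ in $L^{(2^*)'}(M)$, so that term passes to the limit against $\phi\in L^{2^*}(M)$; and boundedness of $\{u_\alpha\}$ in $L^{2^*}(M)$ plus a.e.\ convergence on $M$ gives $u_\alpha^{2^*-1}\rightharpoonup(u^0)^{2^*-1}$ weakly in $L^{(2^*)'}(M)$, so that term passes too. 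With $u^0\ge 0$ from Proposition \ref{u0 property}, this shows $u^0$ is a nonnegative weak solution of \eqref{limit eq}, i.e. $DI^{\gamma,\infty}_g(u^0)\cdot\phi=0$ for all such $\phi$.

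For (ii), I write $u_\alpha=u^0+\hat u_\alpha$ and expand. In the Dirichlet term the cross term $\int_X\rho^{1-2\gamma}\langle\nabla u^0,\nabla\hat u_\alpha\rangle_g\,dv_g=o(1)$ since $\hat u_\alpha\rightharpoonup 0$. In $\int_M Q^\gamma_\alpha u_\alpha^2\,d\sigma_{\hat h}$, the cross term and the pure $\hat u_\alpha$ term are $o(1)$ because $|Q^\gamma_\alpha|\le C$ and $\hat u_\alpha\to 0$ in $L^2(M)$, while $\int_M Q^\gamma_\alpha(u^0)^2\,d\sigma_{\hat h}\to\int_M Q^\gamma_\infty(u^0)^2\,d\sigma_{\hat h}$ by dominated convergence with dominant $C(u^0)^2\in L^1(M)$. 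Finally the Brezis--Lieb lemma gives $\int_M|u_\alpha|^{2^*}\,d\sigma_{\hat h}=\int_M|u^0|^{2^*}\,d\sigma_{\hat h}+\int_M|\hat u_\alpha|^{2^*}\,d\sigma_{\hat h}+o(1)$, using boundedness in $L^{2^*}(M)$ and a.e.\ convergence on $M$. Collecting the surviving terms yields precisely $I^\gamma_g(\hat u_\alpha)+I^{\gamma,\infty}_g(u^0)+o(1)$.

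For (iii), boundedness of $\{I^\gamma_g(\hat u_\alpha)\}$ is immediate from (ii) and Definition \ref{palais-smale}(i). For the differential, I fix $\phi\in W^{1,2}(X,\rho^{1-2\gamma})$ and subtract the weak equation for $u^0$, namely $DI^{\gamma,\infty}_g(u^0)\cdot\phi=0$ from (i), from \eqref{PS1}; this isolates $DI^\gamma_g(\hat u_\alpha)\cdot\phi$ up to the two errors $\int_M(Q^\gamma_\alpha u_\alpha-Q^\gamma_\infty u^0)\phi\,d\sigma_{\hat h}$ and $\int_M\bigl(u_\alpha^{2^*-1}-(u^0)^{2^*-1}-|\hat u_\alpha|^{2^*-2}\hat u_\alpha\bigr)\phi\,d\sigma_{\hat h}$. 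The first is $o(\|\phi\|)$ by the $L^{(2^*)'}(M)$ convergence proved in (i). The second is $o(\|\phi\|)$ once one shows $R_\alpha:=u_\alpha^{2^*-1}-(u^0)^{2^*-1}-|\hat u_\alpha|^{2^*-2}\hat u_\alpha\to 0$ in $L^{(2^*)'}(M)$: this is the derivative form of Brezis--Lieb, obtained from the elementary inequality $\bigl||a+b|^{p-2}(a+b)-|a|^{p-2}a-|b|^{p-2}b\bigr|\le\varepsilon|b|^{p-1}+C_\varepsilon|a|^{p-1}$ with $p=2^*$, $a=u^0$, $b=\hat u_\alpha$, which yields $|R_\alpha|^{(2^*)'}\le C\varepsilon^{(2^*)'}|\hat u_\alpha|^{2^*}+f_\alpha$ with $0\le f_\alpha\le C_\varepsilon^{(2^*)'}(u^0)^{2^*}\in L^1(M)$ and $f_\alpha\to 0$ a.e.; dominated convergence on $f_\alpha$, the uniform bound on $\|\hat u_\alpha\|_{L^{2^*}(M)}$, and letting $\varepsilon\to 0$ give $\|R_\alpha\|_{L^{(2^*)'}(M)}\to 0$. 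Since $\|\phi\|_{L^{2^*}(M)}\le C\|\phi\|_{W^{1,2}(X,\rho^{1-2\gamma})}$, both errors are $o(\|\phi\|_{W^{1,2}(X,\rho^{1-2\gamma})})$, so $DI^\gamma_g(\hat u_\alpha)\to 0$ in the dual and $\{\hat u_\alpha\}$ is a Palais--Smale sequence for $I^\gamma_g$.

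I expect the main obstacle to be the two Brezis--Lieb arguments on the boundary, especially the derivative version in (iii): showing that the nonlinear remainder $R_\alpha$ vanishes in $L^{(2^*)'}(M)$ is where the critical exponent meets the merely weak convergence of $\hat u_\alpha$, and it requires combining the pointwise convexity estimate with a.e.\ convergence and an equi-integrability (Vitali) argument rather than any soft compactness. By contrast, the degenerate weight $\rho^{1-2\gamma}$ plays essentially no role here, since the weighted Dirichlet integrals behave well under weak convergence; the remainder of the proof is a careful bookkeeping of weak versus strong convergence, once the equivalence of norms and the trace embeddings of Section~2 are in hand.
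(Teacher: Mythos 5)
Your proposal is correct and follows essentially the same route as the paper: pass to the limit in \eqref{PS1} for (i), then expand $u_\alpha=u^0+\hat u_\alpha$ and control the cross terms and the critical nonlinear remainders by Brezis--Lieb type pointwise estimates combined with weak/a.e.\ convergence on $M$ for (ii) and (iii). If anything, your treatment of the derivative remainder $R_\alpha$ in (iii) (strong $L^{(2^*)'}(M)$ convergence via the $\varepsilon$-inequality, a.e.\ convergence and dominated convergence) is more complete than the paper's brief H\"older-plus-weak-convergence estimate for $\Psi_\alpha$, which as written needs exactly this kind of argument to close.
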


\begin{proof}
(i) As $\mathcal C^\infty(\overline{X})$ is dense in $W^{1,2}(X,\rho^{1-2\gamma})$, we only consider the proof in $\mathcal C^\infty(\overline{X})$. Let $\phi\in \mathcal C^\infty(\overline{X})$.
Since $Q^\gamma_\alpha\rightarrow Q^\gamma_\infty$ in $L^2(M,\hat{h})$ as $\alpha\rightarrow+\infty$ and $u_\alpha\rightharpoonup u^0$ weakly in $W^{1,2}(X,\rho^{1-2\gamma})$ as $\alpha\rightarrow+\infty$, then
$$
\int_MQ^\gamma_\alpha u_\alpha\phi \,d\sigma_{\hat{h}}=\int_MQ^\gamma_\infty u^0\phi \,d\sigma_{\hat{h}}+o(1).
$$
Passing to the limit in \eqref{PS1}, we get easily that
$$
\int_X\rho^{1-2\gamma}\langle\nabla u^0,\nabla\phi\rangle_g \,dv_g+\int_MQ^\gamma_\infty u^0\phi \,d\sigma_{\hat{h}}
=\int_M(u^0)^{2^*-1}\phi\, d\sigma_{\hat{h}},
$$
i.e. $u^0$ is a weak solution to the limit equation \eqref{limit eq}.\\

For the proof of (ii), recall that
$$\int_MQ^\gamma_\alpha u^2_\alpha \,d\sigma_{\hat{h}}=\int_MQ^\gamma_\infty (u^0)^2 \,d\sigma_{\hat{h}}+o(1),$$
and
\begin{equation*}
\begin{split}
I^{\gamma,\alpha}_g(u_\alpha)
&=\frac{1}{2}\int_X\rho^{1-2\gamma}|\nabla u_\alpha|_g^2\,dv_g+\frac{1}{2}\int_MQ_\alpha^\gamma u_\alpha^2\,d\sigma_{\hat{h}}
-\frac{1}{2^*}\int_Mu_\alpha^{2^*}\,d\sigma_{\hat{h}},\\
I^{\gamma,\infty}_g(u^0)
&=\frac{1}{2}\int_X\rho^{1-2\gamma}|\nabla u^0|_g^2\,dv_g+\frac{1}{2}\int_MQ_\infty^\gamma(u^0)^2\,d\sigma_{\hat{h}}
-\frac{1}{2^*}\int_M (u^0)^{2^*}\,d\sigma_{\hat{h}},\\
I^\gamma_g(\hat{u}_\alpha)
&=\frac{1}{2}\int_X\rho^{1-2\gamma}|\nabla\hat{u}_\alpha|_g^2\,dv_g
-\frac{1}{2^*}\int_M|\hat{u}_\alpha|^{2^*}\,d\sigma_{\hat{h}},
\end{split}\end{equation*}
where $\hat{u}_\alpha=u_\alpha-u^0$. Then
\begin{equation*}\begin{split}
 I^{\gamma,\alpha}_g(u_\alpha) &-I^{\gamma,\infty}_g(u^0)-I^\gamma_g(\hat{u}_\alpha)\\
=&\int_X\rho^{1-2\gamma}\langle\nabla u^0,\nabla\hat{u}_\alpha\rangle_g \,dv_g-\frac{1}{2^*}\int_M\Phi_\alpha \,d\sigma_{\hat{h}}+o(1),
\end{split}\end{equation*}
where $\Phi_\alpha=|\hat{u}_\alpha+u^0|^{2^*}-|\hat{u}_\alpha|^{2^*}-|u^0|^{2^*}$.
Note that $\hat{u}_\alpha\rightharpoonup0$ weakly in $W^{1,2}(X,\rho^{1-2\gamma})$ as $\alpha\rightarrow+\infty$, thus
$$
\int_X\rho^{1-2\gamma}\langle\nabla u^0,\nabla \hat{u}_\alpha\rangle_g\,dv_g\rightarrow0, \ \ \ \hbox{as}  \  \alpha\rightarrow\infty.
$$
On the other hand, it is easy to check that there exists a constant $C>0$, independent of $\alpha$, such that
$$
\left||\hat{u}_\alpha+u^0|^{2^*}-|\hat{u}_\alpha|^{2^*}-|u^0|^{2^*}\right|
\leq
C\left(|\hat{u}_\alpha|^{2^*-1}|u^0|+|u^0|^{2^*-1}|\hat{u}_\alpha|\right).
$$
As a consequence, since $\hat{u}_\alpha\rightharpoonup0$ weakly in $L^{2^*}(M,\hat{h})$ by Proposition \ref{weighted trace sobolev embedding}, we have
$$
\int_M|\Phi_\alpha|\,d\sigma_{\hat{h}}\rightarrow0,  \ \ \ \hbox{as}  \  \alpha\rightarrow+\infty.
$$
The proof of (ii) is completed.\\

(iii) For any $\phi\in \mathcal C^\infty(\overline{X})$, by (i) we have
$$
DI^{\gamma,\infty}_g(u^0)\cdot\phi=0.
$$
Since, in addition,
$$
\int_MQ^\gamma_\alpha u_\alpha\phi \,d\sigma_{\hat{h}}
=\int_MQ^\gamma_\infty u^0\phi \,d\sigma_{\hat{h}}+o(\|\phi\|_{W^{1,2}(X,\rho^{1-2\gamma})}),
$$
then
\begin{equation}\label{equation10}
DI^{\gamma,\alpha}_g(u_\alpha)\cdot\phi
=DI^\gamma_g(\hat{u}_\alpha)\cdot\phi-\int_M\Psi_\alpha\phi \,d\sigma_{\hat{h}}+o(\|\phi\|_{W^{1,2}(X,\rho^{1-2\gamma})}),
\end{equation}
where
$\Psi_\alpha=|\hat{u}_\alpha+u^0|^{2^*-2}(\hat{u}_\alpha+u^0)-|\hat{u}_\alpha|^{2^*-2}\hat{u}_\alpha -|u^0|^{2^*-2}u^0$, and it is easy to check that  there exits a constant $C>0$ independent of $\alpha$ such that

$$
|\Psi_\alpha|
\leq C\left(|\hat{u}_\alpha|^{2^*-2}|u^0|+|\hat{u}_\alpha\|u^0|^{2^*-2}\right).
$$
By H\"{o}lder's inequality and the fact $\hat{u}_\alpha\rightharpoonup0$ weakly in $W^{1,2}(X,\rho^{1-2\gamma})$ as $\alpha\rightarrow+\infty$, we have

\begin{equation*}\begin{split}
\int_M\Psi_\alpha\phi &\,d\sigma_{\hat{h}}\\
\leq&\left(\big\| |\hat{u}_\alpha|^{2^*-2}|u^0|\big\|_{L^{2^*/(2^*-1)}(M)}
+\big\| |\hat{u}_\alpha \|u^0|^{2^*-2}\big\|_{L^{2^*/(2^*-1)}(M)}\right)\|\phi\|_{L^{2^*}( M)}\\
=&\,o(1)\|\phi\|_{L^{2^*}(M)}.
\end{split}\end{equation*}
Thus from \eqref{equation10},
$$
DI^{\gamma,\alpha}_g(u_\alpha)\cdot\phi=DI^\gamma_g(\hat{u}_\alpha)\cdot\phi+o(1)\|\phi\|_{L^{2^*}(M)},
$$
which implies that $DI^\gamma_g(\hat{u}_\alpha)\rightarrow0$ in $W^{1,2}(X,\rho^{1-2\gamma})'$ as $\alpha\rightarrow+\infty$, since $\{u_\alpha\}_{\alpha\in\mathbb{N}}$ is a Palais-Smale sequence for $\{I^{\gamma,\alpha}_g\}_{\alpha\in\mathbb{N}}$.

Finally, from (ii), we know that $\{\hat{u}_\alpha\}_{\alpha\in\mathbb{N}}$ is a Palais-Smale sequence for $I^\gamma_g$. This completes the proof of the lemma.
\end{proof}

\vskip 0.1in

Now we give a criterion for strong convergence of Palais-Smale sequences.
First,

\begin{lemma}\label{criterion lemma}
Let $\{\hat{u}_\alpha\}_{\alpha\in \mathbb{N}}$ be a Palais-Smale sequence for $I^\gamma_g$ and such that $\hat{u}_\alpha\rightharpoonup0$ weakly in $W^{1,2}(X,\rho^{1-2\gamma})$ as $\alpha\rightarrow+\infty$. If $I^\gamma_g(\hat{u}_\alpha)\rightarrow\beta$ and
\begin{equation}\label{beta-0}\beta<\beta_0=
\frac{\gamma}{n}(d^*_\gamma)^{-\frac{n}{2\gamma}}\Lambda_\gamma(M,[\hat{h}])^{\frac{n}{2\gamma}},
\end{equation}
then $\hat{u}_\alpha\rightarrow0$ in $W^{1,2}(X,\rho^{1-2\gamma})$ as $\alpha\rightarrow+\infty$.
\end{lemma}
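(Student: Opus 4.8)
The plan is to run the standard Struwe-type alternative: I test the Palais--Smale condition against $\hat u_\alpha$ itself, use the resulting identity together with $I^\gamma_g(\hat u_\alpha)\to\beta$ to compute the limits of the Dirichlet energy and of the boundary $L^{2^*}$-mass of $\hat u_\alpha$, and then, should that common limit be positive, plug $\hat u_\alpha$ into the variational definition \eqref{Yamabe-constant} of $\Lambda_\gamma(M,[\hat{h}])$ to force $\beta\ge\beta_0$, contradicting the hypothesis.

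Concretely, I would first note that $\{\hat u_\alpha\}$ is bounded in $W^{1,2}(X,\rho^{1-2\gamma})$ by exactly the argument of Lemma~\ref{bounded lemma} (taking $Q^\gamma_\alpha\equiv 0$) together with Proposition~\ref{weighted trace sobolev embedding}; this allows every error $o(\|\hat u_\alpha\|_{W^{1,2}(X,\rho^{1-2\gamma})})$ occurring in the Palais--Smale condition to be read as $o(1)$. Choosing $\phi=\hat u_\alpha$ in $DI^\gamma_g(\hat u_\alpha)\cdot\phi=o(1)$ gives $\int_X\rho^{1-2\gamma}|\nabla\hat u_\alpha|^2_g\,dv_g=\int_M|\hat u_\alpha|^{2^*}\,d\sigma_{\hat{h}}+o(1)$. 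Combining this with the algebraic identity $I^\gamma_g(\hat u_\alpha)-\tfrac12 DI^\gamma_g(\hat u_\alpha)\cdot\hat u_\alpha=\tfrac{\gamma}{n}\int_M|\hat u_\alpha|^{2^*}\,d\sigma_{\hat{h}}$ and the hypothesis $I^\gamma_g(\hat u_\alpha)\to\beta$, I obtain
$$\int_M|\hat u_\alpha|^{2^*}\,d\sigma_{\hat{h}}\to\frac{n}{\gamma}\beta,\qquad \int_X\rho^{1-2\gamma}|\nabla\hat u_\alpha|^2_g\,dv_g\to\frac{n}{\gamma}\beta,$$
so in particular $\beta\ge 0$.

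Now I would split into cases. If $\beta=0$, both integrals above tend to $0$; since $\hat u_\alpha\rightharpoonup 0$ weakly in $W^{1,2}(X,\rho^{1-2\gamma})$ and the trace embedding into $L^2(M,\hat{h})$ is compact (as used in the proof of Proposition~\ref{u0 property}, via Propositions~\ref{weighted trace sobolev embedding} and \ref{fractional compact embedding}), one also has $\int_M\hat u_\alpha^2\,d\sigma_{\hat{h}}\to0$, whence $\|\hat u_\alpha\|_{W^{1,2}(X,\rho^{1-2\gamma})}\to0$ and the lemma holds. If instead $\beta>0$, then $\int_M|\hat u_\alpha|^{2^*}\,d\sigma_{\hat{h}}>0$ for all large $\alpha$, so $\hat u_\alpha$ is admissible in \eqref{Yamabe-constant} and $\Lambda_\gamma(M,[\hat{h}])\le I_\gamma[\hat u_\alpha,g]$. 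Since $Q^{\hat{h}}_\gamma$ is bounded and $\hat u_\alpha\to0$ in $L^2(M,\hat{h})$, the curvature term in the numerator of $I_\gamma[\hat u_\alpha,g]$ is $o(1)$; passing to the limit and using $1-2/2^*=2\gamma/n$ yields $\Lambda_\gamma(M,[\hat{h}])\le d^*_\gamma(\tfrac{n}{\gamma}\beta)^{2\gamma/n}$, equivalently $\beta\ge\tfrac{\gamma}{n}(d^*_\gamma)^{-n/(2\gamma)}\Lambda_\gamma(M,[\hat{h}])^{n/(2\gamma)}=\beta_0$, contradicting $\beta<\beta_0$. Hence $\beta=0$ and we are in the first case.

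Beyond bookkeeping, the proof rests on two ingredients, and these are exactly the points I would expect to demand care in this degenerate/nonlocal setting: the uniform boundedness of a Palais--Smale sequence for $I^\gamma_g$, and the \emph{compactness} of the weighted trace embedding $W^{1,2}(X,\rho^{1-2\gamma})\hookrightarrow L^2(M,\hat{h})$, which is what lets me both discard the $Q^{\hat{h}}_\gamma$ term and upgrade $\hat u_\alpha\rightharpoonup0$ to strong convergence on $M$. Both are supplied by the preliminary section (the argument of Lemma~\ref{bounded lemma} and Propositions~\ref{weighted trace sobolev embedding}--\ref{fractional compact embedding}), and the inequality $\Lambda_\gamma(M,[\hat{h}])>0$ in the positive case $C_\gamma>0$ quoted from \cite{G-Q} guarantees $\beta_0>0$, so the threshold is genuinely informative; no further difficulty arises.
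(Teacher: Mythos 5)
Your proof is correct and follows essentially the same route as the paper's: testing the Palais--Smale condition with $\hat u_\alpha$ to identify $\tfrac{n}{\gamma}\beta$ as the common limit of the Dirichlet energy and the boundary $L^{2^*}$-mass, then feeding $\hat u_\alpha$ into the definition \eqref{Yamabe-constant} of $\Lambda_\gamma(M,[\hat h])$ (with the $Q^{\hat h}_\gamma$ term killed by the compact trace embedding into $L^2(M,\hat h)$) to force $\beta\ge\beta_0$ unless $\beta=0$. Your explicit case split and the final step upgrading $\beta=0$ to strong $W^{1,2}(X,\rho^{1-2\gamma})$ convergence are only slightly more detailed than the paper's presentation, which leaves that last implication implicit.
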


\begin{proof}
By Lemma \ref{bounded lemma} (here $Q^\gamma_\alpha\equiv0$), there exists a constant $C>0$ such that $\|\hat{u}_\alpha\|_{W^{1,2}(X,\rho^{1-2\gamma})}\leq C$ for all $\alpha\in\mathbb{N}$, so
\begin{equation*}\begin{split}
DI^\gamma_g(\hat{u}_\alpha)\cdot\hat{u}_\alpha
&=\int_X\rho^{1-2\gamma}|\nabla\hat{u}_\alpha|_g^2\,dv_g-\int_M|\hat{u}_\alpha|^{2^*}d\sigma_{\hat{h}}\\
&=o(\|\hat{u}_\alpha\|_{W^{1,2}(X,\rho^{1-2\gamma})})=o(1).
\end{split}\end{equation*}
Then note that $I^\gamma_g(\hat{u}_\alpha)\rightarrow\beta$ as $\alpha\rightarrow+\infty$, we have
\begin{equation}\label{equation12}\begin{split}
\beta+o(1)
&=I^\gamma_g(\hat{u}_\alpha) \\
&=\frac{1}{2}\int_X\rho^{1-2\gamma}|\nabla\hat{u}_\alpha|_g^2\,dv_g
-\frac{1}{2^*}\int_M|\hat{u}_\alpha|^{2^*}d\sigma_{\hat{h}}\\
&=\frac{\gamma}{n}\int_X\rho^{1-2\gamma}|\nabla\hat{u}_\alpha|_g^2\,dv_g+o(1)\\
&=\frac{\gamma}{n}\int_M|\hat{u}_\alpha|^{2^*}d\sigma_{\hat{h}}+o(1).
\end{split}\end{equation}

On the other hand, it was shown in \cite{G-Q} that in the positive curvature case, then the $\gamma$-Yamabe constant \eqref{Yamabe-constant} must be positive: $\Lambda_\gamma(M,[\hat{h}])>0$. Moreover, by definition,
\begin{equation}\label{equation13}
\Lambda_\gamma(M,[\hat{h}])\left(\int_M|\hat{u}_\alpha|^{2^*}\,d\sigma_{\hat{h}}\right)^{\frac{2}{2^*}}
\leq d^*_\gamma\int_X\rho^{1-2\gamma}|\nabla \hat{u}_\alpha|^2_g\,dv_g
+\int_MQ^{\hat{h}}_\gamma\hat{u}_\alpha^2\,d\sigma_{\hat{h}}.
\end{equation}
where $d^*_\gamma>0$. We also know that $|Q_\gamma^{\hat{h}}|\leq C$ on $M^n$. Note that $\hat{u}_\alpha\rightharpoonup0$ in $L^{2^*}(M,\hat{h})$ as $\alpha\rightarrow+\infty$ by Proposition \ref{weighted trace sobolev embedding}, then $\int_M\hat{u}_\alpha^2\,d\sigma_{\hat{h}}\rightarrow0$ as $\alpha\rightarrow+\infty$ since the embedding $L^{2^*}(M,\hat{h})\subset L^{2}(M,\hat{h})$ is compact. So we get from \eqref{equation12} and \eqref{equation13} that
$$
\left( \frac{n}{\gamma}\beta+o(1)\right)^{\frac{2}{2^*}}\leq d^*_\gamma\Lambda_\gamma(M,[\hat{h}])^{-1}\frac{n}{\gamma}\beta+o(1).
$$
Taking $\alpha\rightarrow+\infty$, we must have $\beta=0$ because of our initial condition \eqref{beta-0}. The Lemma is proved.
\end{proof}

\vskip 0.1in

Note that the Palais-Smale condition (ii) is the weak form of a Dirichlet-to-Neumann problem for a degenerate elliptic PDE. In fact, as $DI^\gamma_g(\hat{u}_\alpha)\rightarrow0$ in $W^{1,2}(X,\rho^{1-2\gamma})'$, it follows that, for any $\psi\in W^{1,2}(X,\rho^{1-2\gamma})$,
\begin{equation}\label{general weak solution}
\int_X\rho^{1-2\gamma}\langle\nabla\hat{u}_\alpha,\nabla\psi\rangle_g \,dv_g
-\int_M|\hat{u}_\alpha|^{2^*-2}\hat{u}_\alpha\psi \,d\sigma_{\hat{h}}
=o(1)\|\psi\|_{W^{1,2}(X,\rho^{1-2\gamma})}.
\end{equation}
In particular, for any $\bar{\psi}\in \overline{W}^{1,2}(X,\rho^{1-2\gamma})$, then
$$
\int_X\rho^{1-2\gamma}\langle\nabla\hat{u}_\alpha,\nabla\bar{\psi}\rangle_g \, dv_g=o(1)\|\bar{\psi}\|_{\overline{W}^{1,2}(X,\rho^{1-2\gamma})},
$$
which is is precisely the weak formulation for the asymptotic equation
\begin{equation}\label{interior equation}
-\divergence(\rho^{1-2\gamma}\nabla\hat{u}_\alpha)=o(1) \ \  \hbox{in} \  X.
\end{equation}
Multiplying both sides of (\ref{interior equation}) by $\psi\in W^{1,2}(X,\rho^{1-2\gamma})$ and integrating by parts, we obtain that
$$
\int_M\lim_{\rho\rightarrow0}\rho^{1-2\gamma}\partial_\rho\hat{u}_\alpha\psi \,d\sigma_{\hat{h}}
+\int_X\rho^{1-2\gamma}\langle\nabla\hat{u}_\alpha,\nabla\psi\rangle_g \,dv_g
=o(1)\|\psi\|_{W^{1,2}(X,\rho^{1-2\gamma})},
$$
which combined with \eqref{general weak solution} yields that
$$
\int_M\lim_{\rho\rightarrow0}\rho^{1-2\gamma}\partial_\rho\hat{u}_\alpha\psi \,d\sigma_{\hat{h}}
+\int_M|\hat{u}_\alpha|^{2^*-2}\hat{u}_\alpha\psi \,d\sigma_{\hat{h}}
=o(1)\|\psi\|_{W^{1,2}(X,\rho^{1-2\gamma})},
$$
and this is precisely the boundary equation in the weak sense
\begin{equation}\label{boundary equation}
-\lim_{\rho\rightarrow0}\rho^{1-2\gamma}\partial_\rho\hat{u}_\alpha=
|\hat{u}_\alpha|^{2^*-2}\hat{u}_\alpha+o(1)\  \hbox{on} \  M.
\end{equation}
For the above equations \eqref{interior equation} and \eqref{boundary equation} for $\{\hat{u}_\alpha\}_{\alpha\in\mathbb{N}}$, we have the following energy estimate, which will plays an important role in the proof of the strong convergence in the next section. We use the notation $\mathfrak{B}^+_r$ instead of $\mathfrak{B}^+_r(0)$ for convenience.

\begin{lemma}\label{epsilon regularity lemma}($\varepsilon$-regularity estimates) Suppose that $\{v_\alpha\}_{\alpha\in\mathbb{N}}$ satisfies the following asymptotic boundary value problem
\begin{equation}\label{asymptotic equation}
\left\{
\begin{split}
-\divergence(\rho^{1-2\gamma}\nabla v_\alpha)=o(1) & \quad  \hbox{in}  \ \  X,\\
-\lim_{\rho\rightarrow0}\rho^{1-2\gamma}\partial_\rho v_\alpha=|v_\alpha|^{2^*-2}v_\alpha+o(1)&\quad   \hbox{on} \ \  M.
\end{split} \right.
\end{equation}
If there exists small $\varepsilon>0$ depending on $n,\gamma$ such that $\int_{\partial'\mathfrak{B}^+_{2r}}|v_\alpha|^{2^*}d\sigma_{\hat{h}}\leq\varepsilon$ uniformly in $\alpha$ for some small $r>0$, then
$$
\int_{\mathfrak{B}^+_r}\rho^{1-2\gamma}|\nabla v_\alpha|^2_g\,dv_g
\leq\frac{C}{r^2}\int_{\mathfrak{B}^+_{2r}}\rho^{1-2\gamma}v^2_\alpha \,dv_g
+C\int_{\partial'\mathfrak{B}^+_{2r}}v^2_\alpha \,d\sigma_{\hat{h}}
+o(1)\int_{\mathfrak{B}^+_{2r}}|v_\alpha|\, dv_g,
$$
where $C=C(n,\varepsilon,\gamma)$ independent of $\alpha$.
\end{lemma}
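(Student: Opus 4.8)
The plan is to localize the weak formulations \eqref{interior equation} and \eqref{boundary equation} by testing against $\psi = \eta^2 v_\alpha$, where $\eta$ is a smooth cutoff function with $\eta \equiv 1$ on $\mathfrak{B}^+_r$, $\eta \equiv 0$ outside $\mathfrak{B}^+_{2r}$, and $|\nabla \eta| \leq C/r$. Substituting this test function into the combined weak equation (the one obtained right before \eqref{boundary equation} with the right-hand nonlinearity, i.e. the weak form of \eqref{asymptotic equation}) and expanding $\nabla(\eta^2 v_\alpha) = 2\eta v_\alpha \nabla\eta + \eta^2 \nabla v_\alpha$, one obtains
$$
\int_X \rho^{1-2\gamma} \eta^2 |\nabla v_\alpha|^2_g \, dv_g
= -2\int_X \rho^{1-2\gamma} \eta v_\alpha \langle \nabla v_\alpha, \nabla\eta\rangle_g \, dv_g
+ \int_M |v_\alpha|^{2^*-2} v_\alpha \, \eta^2 v_\alpha \, d\sigma_{\hat h}
+ o(1)\|\eta^2 v_\alpha\|,
$$
where the $o(1)\|\eta^2 v_\alpha\|$ term, using boundedness of $v_\alpha$ in $W^{1,2}(X,\rho^{1-2\gamma})$ and the explicit form of the norm, is absorbed into $o(1)\int_{\mathfrak{B}^+_{2r}} |v_\alpha|\, dv_g$ plus lower-order contributions (here one must be a little careful: the $o(1)$ in \eqref{asymptotic equation} is the distributional remainder, and pairing it with $\eta^2 v_\alpha$ gives a term controlled by $o(1)$ times an $L^1$-type norm of $v_\alpha$ on the support of $\eta$, which is the stated error). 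The first term on the right is handled by Cauchy–Schwarz and Young's inequality: for any $\delta>0$,
$$
\Big| 2\int_X \rho^{1-2\gamma}\eta v_\alpha \langle\nabla v_\alpha,\nabla\eta\rangle_g\,dv_g\Big|
\leq \delta \int_X \rho^{1-2\gamma}\eta^2|\nabla v_\alpha|^2_g\,dv_g
+ \frac{C}{\delta}\int_X \rho^{1-2\gamma} |\nabla\eta|^2 v_\alpha^2\,dv_g,
$$
and the last integral is bounded by $\frac{C}{\delta r^2}\int_{\mathfrak{B}^+_{2r}}\rho^{1-2\gamma}v_\alpha^2\,dv_g$.

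The key step is the treatment of the boundary term $\int_M |v_\alpha|^{2^*} \eta^2\, d\sigma_{\hat h}$. Here I would invoke the weighted trace Sobolev inequality (Proposition \ref{weighted trace sobolev embedding}), applied to the localized function $\eta v_\alpha$ on $\mathfrak{B}^+_{2r}$, together with Hölder's inequality and the smallness hypothesis $\int_{\partial'\mathfrak{B}^+_{2r}} |v_\alpha|^{2^*}\,d\sigma_{\hat h} \leq \varepsilon$. Concretely, since $2^* = 2n/(n-2\gamma)$ one writes
$$
\int_M |v_\alpha|^{2^*}\eta^2\,d\sigma_{\hat h}
\leq \Big(\int_{\partial'\mathfrak{B}^+_{2r}} |v_\alpha|^{2^*}\,d\sigma_{\hat h}\Big)^{\frac{2^*-2}{2^*}}
\Big(\int_M |\eta v_\alpha|^{2^*}\,d\sigma_{\hat h}\Big)^{\frac{2}{2^*}}
\leq \varepsilon^{\frac{2^*-2}{2^*}} \Big(\int_M |\eta v_\alpha|^{2^*}\,d\sigma_{\hat h}\Big)^{\frac{2}{2^*}},
$$
and then the trace inequality bounds the last factor by $(S(n,\gamma)+\delta)\int_X\rho^{1-2\gamma}|\nabla(\eta v_\alpha)|^2_g\,dv_g + C_\delta\int_X\rho^{1-2\gamma}(\eta v_\alpha)^2\,dv_g$. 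Expanding $\nabla(\eta v_\alpha)$ once more and splitting as above, the gradient part produces $\int_X\rho^{1-2\gamma}\eta^2|\nabla v_\alpha|^2_g\,dv_g$ times a coefficient proportional to $\varepsilon^{(2^*-2)/2^*}(S(n,\gamma)+\delta)$, plus cross terms absorbed by Young, plus the $\frac{C}{r^2}\int_{\mathfrak{B}^+_{2r}}\rho^{1-2\gamma}v_\alpha^2$ contribution from $|\nabla\eta|^2$.

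Finally, I would choose $\varepsilon$ small enough (depending only on $n,\gamma$, and on a fixed small $\delta$) so that the total coefficient of $\int_X\rho^{1-2\gamma}\eta^2|\nabla v_\alpha|^2_g\,dv_g$ on the right-hand side is strictly less than $1$ — say at most $1/2$ — and then absorb that term into the left-hand side. What remains on the right is exactly $\frac{C}{r^2}\int_{\mathfrak{B}^+_{2r}}\rho^{1-2\gamma}v_\alpha^2\,dv_g$, plus the boundary $L^2$ term $C\int_{\partial'\mathfrak{B}^+_{2r}} v_\alpha^2\,d\sigma_{\hat h}$ coming from the $C_\delta\int_X\rho^{1-2\gamma}(\eta v_\alpha)^2$ piece after reapplying a trace/Poincaré-type estimate on the half-ball (or, more directly, this term can be kept as $C\int_{\mathfrak{B}^+_{2r}}\rho^{1-2\gamma}v_\alpha^2$ and rewritten; matching the precise statement I would instead estimate $\int_M(\eta v_\alpha)^2$ directly and keep the boundary form), plus the error $o(1)\int_{\mathfrak{B}^+_{2r}}|v_\alpha|\,dv_g$. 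Since $\eta \equiv 1$ on $\mathfrak{B}^+_r$, the left side dominates $\frac12\int_{\mathfrak{B}^+_r}\rho^{1-2\gamma}|\nabla v_\alpha|^2_g\,dv_g$, giving the claimed inequality with $C = C(n,\varepsilon,\gamma)$ independent of $\alpha$. The main obstacle is bookkeeping the degenerate weight $\rho^{1-2\gamma}$ through the trace inequality and the product rule without losing uniformity in $\alpha$ — in particular ensuring the smallness threshold for $\varepsilon$ depends only on $n$ and $\gamma$ (via $S(n,\gamma)$) and not on the sequence — together with making rigorous the absorption of the distributional $o(1)$ remainder into an honest $L^1$ bound on $v_\alpha$ over the localization region.
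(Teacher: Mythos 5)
Your proposal is correct and follows essentially the same route as the paper: test the weak form of \eqref{asymptotic equation} with $\eta^2 v_\alpha$, absorb the cross term by Young's inequality, apply H\"older together with the smallness hypothesis to the boundary nonlinearity, invoke the weighted trace Sobolev inequality for $\eta v_\alpha$, and choose $\varepsilon$ small (depending only on $n,\gamma$) to absorb the gradient term. The only cosmetic difference is that the paper uses the trace inequality in the form whose lower-order term is the boundary integral $\int_{\partial'\mathfrak{B}^+_{2r}}(\eta v_\alpha)^2\,d\sigma_{\hat h}$ (via the norm \eqref{norm1}), whereas you use the bulk weighted $L^2$ term; both are controlled by the right-hand side of the claimed estimate, so the conclusion is unaffected.
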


\begin{proof} Let $\eta$ be a smooth cutoff function in $\overline{X}$ such that $0\leq\eta\leq1$, $\eta\equiv1$ in $\mathfrak{B}^+_r$ and $\eta\equiv0$ in $\overline{X}\setminus \mathfrak{B}^+_{2r}$. And we also have $|\nabla \eta|\leq C/r$ in $\mathfrak{B}^+_{2r}\setminus \mathfrak{B}^+_r$. Multiplying both sides of the first equation in \eqref{asymptotic equation}
by $\eta^2v_\alpha$, integrating by parts and substituting the second equation in \eqref{asymptotic equation}, we get
\begin{equation*}\begin{split}
\int_{\mathfrak{B}^+_{2r}}\rho^{1-2\gamma}&\langle\nabla v_\alpha,\nabla(\eta^2v_\alpha)\rangle_g\,dv_g\\
&=-\int_{\partial'\mathfrak{B}^+_{2r}}\lim_{\rho\rightarrow0}\rho^{1-2\gamma}(\partial_\rho v_\alpha)\eta^2v_\alpha \,d\sigma_{\hat{h}}+o(1)\int_{\mathfrak{B}^+_{2r}}\eta^2v_\alpha\, dv_g\\
&=\int_{\partial'\mathfrak{B}^+_{2r}}\eta^2|v_\alpha|^{2^*} d\sigma_{\hat{h}}+o(1)\int_{\mathfrak{B}^+_{2r}}\eta^2 v_\alpha\, dv_g,
\end{split}\end{equation*}
so we have
\begin{equation*}\begin{split}
\int_{\mathfrak{B}^+_{2r}}\rho^{1-2\gamma}\eta^2|\nabla v_\alpha|^2_g\,dv_g
=&-\int_{\mathfrak{B}^+_{2r}}\rho^{1-2\gamma}2\eta v_\alpha\langle\nabla v_\alpha,\nabla\eta\rangle_g \,dv_g\\
&+\int_{\partial'\mathfrak{B}^+_{2r}}\eta^2|v_\alpha|^{2^*} d\sigma_{\hat{h}}
+o(1)\int_{\mathfrak{B}^+_{2r}}\eta^2v_\alpha\, dv_g\\
\leq &\,\frac{1}{2}\int_{\mathfrak{B}^+_{2r}}\eta^2\rho^{1-2\gamma}|\nabla v_\alpha|^2_g\,dv_g
+2\int_{\mathfrak{B}^+_{2r}}\rho^{1-2\gamma}|\nabla \eta|^2_g\,v^2_\alpha \,dv_g\\
&+\int_{\partial'\mathfrak{B}^+_{2r}}\eta^2|v_\alpha|^{2^*} \,d\sigma_{\hat{h}}+o(1)\int_{\mathfrak{B}^+_{2r}}\eta^2|v_\alpha|\, dv_g,
\end{split}\end{equation*}
which implies that
\begin{equation*}\begin{split}
\int_{\mathfrak{B}^+_{2r}}\rho^{1-2\gamma}\eta^2|\nabla v_\alpha|^2_g\,dv_g
\leq&4\int_{\mathfrak{B}^+_{2r}}\rho^{1-2\gamma}|\nabla \eta|^2_gv^2_\alpha\, dv_g
+2\int_{\partial'\mathfrak{B}^+_{2r}}\eta^2|v_\alpha|^{2^*}\, d\sigma_{\hat{h}}\\
&+o(1)\int_{\mathfrak{B}^+_{2r}}\eta^2|v_\alpha|\, dv_g\\
\leq&\frac{C}{r^2}\int_{\mathfrak{B}^+_{2r}}\rho^{1-2\gamma}v^2_\alpha\, dv_g
+2\int_{\partial'\mathfrak{B}^+_{2r}}(\eta v_\alpha)^2|v_\alpha|^{2^*-2}\,d\sigma_{\hat{h}}\\
&+o(1)\int_{\mathfrak{B}^+_{2r}}\eta^2|v_\alpha|\, dv_g.
\end{split}\end{equation*}
By H\"{o}lder's inequality and our initial hypothesis we have
\begin{equation*}\begin{split}
\int_{\partial'\mathfrak{B}^+_{2r}}(\eta v_\alpha)^2|v_\alpha|^{2^*-2}\, d\sigma_{\hat{h}}
&\leq \left(\int_{\partial'\mathfrak{B}^+_{2r}}|\eta v_\alpha|^{2^*}\, d\sigma_{\hat{h}}\right)^{\frac{2}{2^*}}
\left(\int_{\partial'\mathfrak{B}^+_{2r}}|v_\alpha|^{2^*} \,d\sigma_{\hat{h}}\right)^{\frac{2^*-2}{2^*}}\\
&\leq \varepsilon^{\frac{2^*-2}{2^*}}\left(\int_{\partial'\mathfrak{B}^+_{2r}}|\eta v_\alpha|^{2^*} \,d\sigma_{\hat{h}}\right)^{\frac{2}{2^*}}.
\end{split}\end{equation*}
Then it follows from above that
\begin{equation*}\begin{split}
\int_{\mathfrak{B}^+_{2r}}\rho^{1-2\gamma}|\nabla (\eta v_\alpha)|^2_g\,dv_g
\leq & 2\int_{\mathfrak{B}^+_{2r}}\rho^{1-2\gamma}(|\nabla \eta |^2_g\,v^2_\alpha+\eta^2|\nabla  v_\alpha|^2_g)\,dv_g\\
\leq & \frac{C}{r^2}\int_{\mathfrak{B}^+_{2r}}\rho^{1-2\gamma}v^2_\alpha \,dv_g
+C\varepsilon^{\frac{2^*-2}{2^*}}\left(\int_{\partial'\mathfrak{B}^+_{2r}}|\eta v_\alpha|^{2^*} \,d\sigma_{\hat{h}}\right)^{\frac{2}{2^*}}\\
&+o(1)\int_{\mathfrak{B}^+_{2r}}\eta^2|v_\alpha|\, dv_g.
\end{split}\end{equation*}
The trace Sobolev inequality on our manifold setting (Proposition \ref{weighted trace sobolev embedding}) gives that
$$
\left(\int_{\partial'\mathfrak{B}^+_{2r}}|\eta v_\alpha|^{2^*} \,d\sigma_{\hat{h}}\right)^{\frac{2}{2^*}}
\leq C\int_{\mathfrak{B}^+_{2r}}\rho^{1-2\gamma}|\nabla (\eta v_\alpha)|^2_g\,dv_g
+C\int_{\partial'\mathfrak{B}^+_{2r}}(\eta v_\alpha)^2\,d\sigma_{\hat{h}}.
$$
Therefore we obtain
\begin{equation*}\begin{split}
\int_{\mathfrak{B}^+_{2r}}\rho^{1-2\gamma}|\nabla (\eta v_\alpha)|^2_g\,dv_g
\leq &\frac{C}{r^2}\int_{\mathfrak{B}^+_{2r}}\rho^{1-2\gamma}v^2_\alpha \,dv_g
+C\varepsilon^{\frac{2^*-2}{2^*}}\int_{\mathfrak{B}^+_{2r}}\rho^{1-2\gamma}|\nabla (\eta v_\alpha)|^2_g\,dv_g\\
&+C\varepsilon^{\frac{2^*-2}{2^*}}\int_{\partial'\mathfrak{B}^+_{2r}}(\eta v_\alpha)^2\,d\sigma_{\hat{h}}
+o(1)\int_{\mathfrak{B}^+_{2r}}\eta^2|v_\alpha| \,dv_g.
\end{split}\end{equation*}
 Now we fix $r>0$ small such that $\varepsilon$ small enough satisfying $C\varepsilon^{\frac{2^*-2}{2^*}}\leq1/2$. Then we get
$$
\int_{\mathfrak{B}^+_r}\rho^{1-2\gamma}|\nabla v_\alpha|^2_g\,dv_g
\leq\frac{C}{r^2}\int_{\mathfrak{B}^+_{2r}}\rho^{1-2\gamma}v^2_\alpha \,dv_g
+C\int_{\partial'\mathfrak{B}^+_{2r}}v^2_\alpha\, d\sigma_{\hat{h}}
+o(1)\int_{\mathfrak{B}^+_{2r}}|v_\alpha| \,dv_g.
$$
This completes the proof of the lemma.
\end{proof}


\section{The First Bubble Argument}

In this section, we focus on the blow up analysis of a Palais-Smale sequence which is not strongly convergent. In particular, using the $\varepsilon$-regularity estimates (Lemma \ref{epsilon regularity lemma}), we can figure out the first bubble. We will also show that the Palais-Smale sequence obtained by subtracting a bubble is also Palais-Smale sequence and that the energy is splitting.

\begin{lemma}\label{first bubble lemma}
Let $\{\hat{u}_\alpha\}_{\alpha\in \mathbb{N}}$ be a Palais-Smale sequence for $I^\gamma_g$ such that $\hat{u}_\alpha\rightharpoonup0$ weakly in
$W^{1,2}(X,\rho^{1-2\gamma})$, but not strongly as $\alpha\rightarrow+\infty$. Then there exist a sequence of real numbers $\{\mu_\alpha>0\}_{\alpha\in \mathbb{N}}$,
$\mu_\alpha\rightarrow 0$ as $\alpha\rightarrow+\infty$, a converging sequence of points $\{x_\alpha\}_{\alpha\in \mathbb{N}}\subset M$ and a nontrivial solution $u$ to the equation

\begin{equation}\label{Liouville equation}
    \left\{
  \begin{split}
    -\divergence(y^{1-2\gamma}\nabla u)=0 & \quad\hbox{ in } \ \mathbb{R}^{n+1}_+,\\
    -\lim_{y\rightarrow0}y^{1-2\gamma}\partial_y u=|u|^{2^*-2}u & \quad \hbox{ on } \ \mathbb{R}^n,\
  \end{split}
\right.
\end{equation}
such that, up to a subsequence, if we take
$$
\hat{v}_\alpha(z)
=\hat{u}_\alpha(z)-{\eta}_\alpha(z)\mu_\alpha^{-\frac{n-2\gamma}{2}}
u(\mu_\alpha^{-1}\varphi_{x_\alpha}^{-1}(z)), \ \ z\in\varphi_{x_\alpha}(B^+_{2r_0}(0))
$$
where $r_0$, $\eta_\alpha$ and $\varphi_{x_\alpha}$ are as same as in the Theorem \ref{main theorem}, then we have the following three conclusions
\begin{itemize}
\item[(i)] $\hat{v}_\alpha\rightharpoonup0$ weakly in $W^{1,2}(X,\rho^{1-2\gamma})$ as $\alpha\rightarrow+\infty$;

\item[(ii)] $\{\hat{v}_\alpha\}_{\alpha\in \mathbb{N}}$ is also a Palais-Smale sequence for $I^\gamma_g$;

\item[(iii)] $I^\gamma_g(\hat{v}_\alpha)=I^\gamma_g(\hat{u}_\alpha)-\tilde{E}(u)+o(1)$ as $\alpha\rightarrow+\infty$.
\end{itemize}
\end{lemma}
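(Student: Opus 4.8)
The plan is to follow the Struwe--Druet--Hebey--Robert blow-up scheme, carefully adapted to the degenerate weight $\rho^{1-2\gamma}$. Since $\hat u_\alpha\rightharpoonup 0$ but not strongly, by the computation in \eqref{equation12} (applied with $Q^\gamma_\alpha\equiv 0$) the energy $\beta=\lim_\alpha I^\gamma_g(\hat u_\alpha)$ satisfies $\beta>0$, and in fact $\frac{n}{\gamma}\beta=\lim_\alpha\int_X\rho^{1-2\gamma}|\nabla\hat u_\alpha|^2_g\,dv_g=\lim_\alpha\int_M|\hat u_\alpha|^{2^*}\,d\sigma_{\hat h}>0$; moreover by Lemma \ref{criterion lemma} we must have $\beta\geq\beta_0>0$. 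First I would fix the small threshold $\varepsilon>0$ from the $\varepsilon$-regularity Lemma \ref{epsilon regularity lemma} (shrinking it further if necessary) and introduce the concentration function
\[
f_\alpha(x,t)=\int_{\mathfrak{D}_t(x)}|\hat u_\alpha|^{2^*}\,d\sigma_{\hat h},\qquad x\in M,\ t>0 .
\]
Because $\int_M|\hat u_\alpha|^{2^*}\,d\sigma_{\hat h}\to \frac{n}{\gamma}\beta>\varepsilon$ for $\alpha$ large while $f_\alpha(x,t)\to 0$ as $t\to 0$, one can choose $x_\alpha\in M$ and $\mu_\alpha>0$ with
\[
f_\alpha(x_\alpha,\mu_\alpha)=\sup_{x\in M} f_\alpha(x,\mu_\alpha)=\tfrac{\varepsilon}{2}.
\]
By compactness of $M$, $x_\alpha\to x_\infty$ up to a subsequence, which gives conclusion (2) of Theorem \ref{main theorem}; and $\mu_\alpha\to 0$ because otherwise, after passing to a subsequence with $\mu_\alpha$ bounded below, a covering argument together with $\hat u_\alpha\rightharpoonup 0$ (hence $\hat u_\alpha\to 0$ in $L^{2^*}_{loc}(M)$ away from finitely many bad points — no such points here since the local mass is $\le\varepsilon/2$) would force $\int_M|\hat u_\alpha|^{2^*}\to 0$, contradicting $\beta>0$. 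This proves $\mu_\alpha\to 0$, i.e. the conclusion (1)-type statement for the single bubble.

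Next I would rescale. Using the Fermi coordinates $\varphi_{x_\alpha}$ from Proposition \ref{special defining function proposition}, in which $g=d\rho^2[1+O(\rho^{2\gamma})]+\hat h[1+O(\rho^{2\gamma})]$, set
\[
w_\alpha(z)=\mu_\alpha^{\frac{n-2\gamma}{2}}\,\hat u_\alpha\bigl(\varphi_{x_\alpha}(\mu_\alpha z)\bigr),\qquad z\in B^+_{r_0/\mu_\alpha}(0),
\]
which preserves the scale-invariant quantities $\int\rho^{1-2\gamma}|\nabla\cdot|^2$ and $\int_M|\cdot|^{2^*}$. The rescaled metrics $g_\alpha(z):=g(\varphi_{x_\alpha}(\mu_\alpha z))$ converge, locally uniformly on $\mathbb{R}^{n+1}_+$, to the flat metric $|dx|^2+dy^2$, and the rescaled weight $\rho^{1-2\gamma}$ converges to $y^{1-2\gamma}$ (here one uses the expansion $\rho=\rho_*(1+O(\rho_*^{2\gamma}))$ from Proposition \ref{special defining function proposition}); the scaling is arranged precisely so that $\int_{D_1(0)}|w_\alpha|^{2^*}=\varepsilon/2$ and $\sup_{x}\int_{D_1(x)}|w_\alpha|^{2^*}\le\varepsilon/2$. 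Then $\{w_\alpha\}$ is bounded in $W^{1,2}_{loc}(\mathbb{R}^{n+1}_+,y^{1-2\gamma})$, so $w_\alpha\rightharpoonup u$ weakly in $W^{1,2}_{loc}$ for some $u$; the uniform small-mass bound lets me apply the $\varepsilon$-regularity Lemma \ref{epsilon regularity lemma} on every ball $\mathfrak{B}^+_r$ to upgrade to strong $W^{1,2}_{loc}$ convergence and (via the regularity estimates collected in the Appendix) to local convergence strong enough to pass to the limit in the rescaled equation \eqref{asymptotic equation}; the $o(1)$ terms scale away. Hence $u$ solves the Euclidean bubble equation \eqref{Liouville equation}. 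That $u\not\equiv 0$ is forced by $\int_{D_1(0)}|u|^{2^*}=\varepsilon/2>0$, which survives the strong local convergence. By the classification result (c.f. \cite{J-L-X}), any nonnegative nontrivial solution of \eqref{Liouville equation} is, up to a constant, a bubble $w^{\lambda}_a$, so $u$ corresponds to $U^{\lambda_1}_{a_1}$ as claimed.

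Finally, with $\hat v_\alpha=\hat u_\alpha-\eta_\alpha\mu_\alpha^{-\frac{n-2\gamma}{2}}u(\mu_\alpha^{-1}\varphi_{x_\alpha}^{-1}(\cdot))$, I would prove (i), (ii), (iii). For (i): testing against $\phi\in\mathcal C^\infty(\overline X)$, the bubble term contributes $\mu_\alpha^{\frac{n-2\gamma}{2}}\int u(\mu_\alpha^{-1}\cdot)\,(\ldots)$, which $\to 0$ as $\mu_\alpha\to 0$ since $U^{\lambda_1}_{a_1}\in W^{1,2}(\mathbb{R}^{n+1}_+,y^{1-2\gamma})$, while $\hat u_\alpha\rightharpoonup 0$; so $\hat v_\alpha\rightharpoonup 0$. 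For (iii), the energy splitting, I expand $I^\gamma_g(\hat u_\alpha)=I^\gamma_g(\hat v_\alpha+\eta_\alpha\text{(bubble)}_\alpha)$; after rescaling the bubble piece back to fixed scale, the cross terms $\int\rho^{1-2\gamma}\langle\nabla\hat v_\alpha,\nabla(\text{bubble})\rangle$ and $\int_M|\hat u_\alpha|^{2^*}-|\hat v_\alpha|^{2^*}-|\text{bubble}|^{2^*}$ are handled by a Brezis--Lieb-type argument together with the strong local convergence $w_\alpha\to u$ and the facts that the metric and weight converge to the Euclidean ones and that the tail of $U^{\lambda_1}_{a_1}$ outside large balls is small; what remains is exactly $I^\gamma_g(\hat v_\alpha)+\tilde E(U^{\lambda_1}_{a_1})+o(1)$ (using the Remark that $\tilde E$ of any bubble equals $\tilde E(U^1_0)$, and that the cutoff $\eta_\alpha$ only affects an annulus where the bubble energy is $o(1)$). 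For (ii), that $\{\hat v_\alpha\}$ is again Palais--Smale, I combine: $I^\gamma_g(\hat v_\alpha)$ is bounded by (iii); and $DI^\gamma_g(\hat v_\alpha)\cdot\psi = DI^\gamma_g(\hat u_\alpha)\cdot\psi - DI^\gamma_g(\eta_\alpha\text{(bubble)})\cdot\psi + (\text{nonlinear cross terms})$, where the first term is $o(\|\psi\|)$ by hypothesis, the middle term is $o(\|\psi\|)$ because the bubble almost solves \eqref{Liouville equation} and $\mu_\alpha\to 0$, and the cross terms are $o(\|\psi\|)$ by Hölder and the strong local convergence, exactly as in the proof of Lemma \ref{limit lemma}(iii). \emph{I expect the main obstacle to be the rescaling/regularity step}: justifying that the degenerate-elliptic $\varepsilon$-regularity estimate and the Appendix regularity theory are strong enough to pass to the limit in the nonlinear boundary term under the converging-metric-and-weight rescaling, and controlling uniformly the $o(1)$ right-hand sides after rescaling — everything else is a bookkeeping adaptation of the classical argument.
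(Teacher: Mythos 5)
Your proposal follows essentially the same scheme as the paper's proof: selection of concentration points and scales via a concentration function, rescaling in Fermi coordinates, upgrading weak to strong local convergence through the $\varepsilon$-regularity lemma to identify a nontrivial solution of \eqref{Liouville equation}, and then the standard near/far splitting and Brezis--Lieb-type arguments for (i)--(iii). The only divergences are cosmetic: you establish $\mu_\alpha\to 0$ up front by a covering argument (the paper deduces it afterwards from $u\not\equiv 0$), you normalize the concentration function slightly differently, and you invoke the classification of bubbles, which the lemma does not actually require at this stage since nonnegativity of $u$ is only proved later.
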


\begin{proof}
Without loss of generality, we assume that $\hat{u}_\alpha\in \mathcal C^\infty(\overline{X})$. By the proof of Lemma \ref{criterion lemma},
$$
I^\gamma_g(\hat{u}_\alpha)=\frac{\gamma}{n}\int_X\rho^{1-2\gamma}|\nabla\hat{u}_\alpha|_g^2\,dv_g+o(1)
=\frac{\gamma}{n}\int_M|\hat{u}_\alpha|^{2^*}d\sigma_{\hat{h}}+o(1).
$$
Note that $\{\hat{u}_\alpha\}_{\alpha\in\mathbb{N}}$ is uniformly bounded in $W^{1,2}(X,\rho^{1-2\gamma})$ by Lemma \ref{bounded lemma}, so there exist a subsequence, also denoted by  $\{\hat{u}_\alpha\}_{\alpha\in\mathbb{N}}$ and a nonnegative constant $\beta$, such that
$$
I^\gamma_g(\hat{u}_\alpha)=\beta+o(1),  \ \ \ \hbox{as} \  \alpha\rightarrow +\infty.
$$
Since $\hat{u}_\alpha\rightharpoonup0$ weakly in $W^{1,2}(X,\rho^{1-2\gamma})$ but not strongly as $\alpha\rightarrow+\infty$, by Lemma \ref{criterion lemma} again we get
$$
\lim_{\alpha\rightarrow+\infty}\int_M|\hat{u}_\alpha|^{2^*}d\sigma_{\hat{h}}
=\frac{n}{\gamma}\beta\geq\frac{n}{\gamma}\beta_0.
$$
We will decompose the rest of the proof into several steps:\\

 {\bf Step 1}. Pick up the likely blow up points. First we show the following claim.

\begin{claim}
For any $t_0>0$ small, there exist $x_0\in M$ and $\varepsilon_0>0$ such that, up to a subsequence
$$
\int_{\mathfrak{D}_{t_0}(x_0)}|\hat{u}_\alpha|^{2^*}\,d\sigma_{\hat{h}}\geq\varepsilon_0.
$$
\end{claim}

\begin{proof}
If the Claim is not true, there exists $t>0$ small, such that for any $x\in M$ it holds
$$
\int_{\mathfrak{D}_{t}(x)}|\hat{u}_\alpha|^{2^*}\,d\sigma_{\hat{h}}\rightarrow0, \ \ \alpha\rightarrow+\infty.
$$
On the other hand, since $(M,\hat h)$ is compact and $M\subset\cup_{x\in M}\mathfrak{D}_{t}(x)$, there exists an integer $N(\geq1)$ such that  $M\subset\cup^N_{i=1}\mathfrak{D}_{t}(x_i)$. Thus
$$
\int_M|\hat{u}_\alpha|^{2^*}\,d\sigma_{\hat{h}}
\leq\sum^N_{i=1}\int_{\mathfrak{D}_{t}(x_i)}|\hat{u}_\alpha|^{2^*}\,d\sigma_{\hat{h}}\rightarrow0,
\ \ \alpha\rightarrow+\infty,
$$
which is a contradiction.
\end{proof}

For $t>0$, we set
$$
\omega_\alpha(t)=\max_{x\in M}\int_{\mathfrak{D}_{t}(x)}|\hat{u}_\alpha|^{2^*}\,d\sigma_{\hat{h}}.
$$
Then by Claim 1, there exists $x_\alpha\in M$ such that
$$
\omega_\alpha(t_0)=\int_{\mathfrak{D}_{t_0}(x_\alpha)}|\hat{u}_\alpha|^{2^*}d\sigma_{\hat{h}}
\geq\varepsilon_0.
$$
Note that
$$
\int_{\mathfrak{D}_{t}(x_\alpha)}|\hat{u}_\alpha|^{2^*}d\sigma_{\hat{h}}\to0, \ \ \hbox{as} \ t\to0.
$$
Hence for any $\varepsilon\in(0,\varepsilon_0)$, there exists $t_\alpha\in(0,t_0)$ such that
\begin{equation}\label{equation20}
\varepsilon
=\int_{\mathfrak{D}_{t_\alpha}(x_\alpha)}|\hat{u}_\alpha|^{2^*}\,d\sigma_{\hat{h}}.
\end{equation}

 {\bf Step 2.} At each likely blow up point, we will establish weak convergence of a Palais-Smale sequence after properly rescaling.

For $r_0>0$ small, consider the Fermi coordinates at the likely blow up point $x_\alpha\in M$, $ \varphi_{x_\alpha}:B^+_{2r_0}(0)\rightarrow X$. Here we restrict $r_0$ to $r_0\leq i_g(X)/2$, where $i_g(X)$ is the injectivity radius of $X$.
Then for any $0<\mu_\alpha\leq1$, we define
$$
\tilde{u}_\alpha(z)=\mu_\alpha^{\frac{n-2\gamma}{2}}\hat{u}_\alpha(\varphi_{x_\alpha}(\mu_\alpha z)), \ \
\tilde{g}_\alpha(z)=(\varphi^*_{x_\alpha}g)(\mu_\alpha z),\ \ \tilde{h}_\alpha(x)=(\varphi^*_{x_\alpha}\hat{h})(\mu_\alpha x),
$$
if  $z\in B^+_{\mu^{-1}_\alpha r_0}(0)$ and $x\in D_{\mu^{-1}_\alpha r_0}(0)$.

Given $z_0\in\mathbb{R}^{n+1}_+$ and $r>0$ such that $|z_0|+r<\mu_\alpha^{-1}r_0$, we have
$$
\int_{B^+_r(z_0)}\tilde{\rho}^{1-2\gamma}_\alpha|\nabla\tilde{u}_\alpha|^2_{\tilde{g}_\alpha}
\,dv_{\tilde{g}_\alpha}
=\int_{\varphi_{x_\alpha}(\mu_\alpha B^+_r(z_0))}\rho^{1-2\gamma}|\nabla \hat{u}_\alpha|^2_g\,dv_g
$$
where
$$\tilde{\rho}_\alpha(z)=\mu^{-1}_\alpha\rho(\varphi_{x_\alpha}(\mu_\alpha z))$$
and $|d\tilde{\rho}_\alpha|_{\tilde{g}_\alpha}=1$ on $\partial'B^+_{r}(z_0)$
since $|d\rho|_g=1$ on $M$.

On the other hand, if $z_0\in\mathbb{R}^n$, and $|z_0|+r< \mu^{-1}_\alpha r_0$, then
\begin{equation*}\begin{split}
\int_{D_r(z_0)}|\tilde{u}_\alpha|^{2^*}\,d\sigma_{\tilde{h}_\alpha}
&=\int_{\varphi_{x_\alpha}(\mu_\alpha D_r(z_0))}|\hat{u}_\alpha|^{2^*}\,d\sigma_{\hat{h}}\\
&\leq \int_{\mathfrak{D}_{2\mu_\alpha r}(\varphi_{x_\alpha}(\mu_\alpha z_0))}|\hat{u}_\alpha|^{2^*}\,d\sigma_{\hat{h}}.
\end{split}\end{equation*}
Here we have used that $\varphi_{x_\alpha}(\mu_\alpha D_r(z_0))=\varphi_{x_\alpha}(D_{\mu_\alpha r}(\mu_\alpha z_0))$, and that for $|x|<r_0, |y|<r_0$, $x,y\in\mathbb{R}^n$, we have $1/2|x-y|\leq d_g(\varphi_{x_\alpha}(x),\varphi_{x_\alpha}(y))\leq2|x-y|$.

Next, take $r\in(0,r_0)$ and choose $t_0$ in Claim 1 such that $0<t_0\leq 2r$. For any $\varepsilon\in(0,\varepsilon_0)$, $\varepsilon$ to be determined later, and $t_\alpha\in(0,t_0)$, let
$0<\mu_\alpha=\frac{1}{2}r^{-1}t_\alpha\leq\frac{1}{2}r^{-1}t_0\leq1$, then by the definition of $\varepsilon$ from \eqref{equation20}, if
$|z_0|+r<\mu^{-1}_\alpha r_0$, we have
\begin{align}\label{energy bound}
\int_{\partial'B^+_r(z_0)}|\tilde{u}_\alpha|^{2^*}d\sigma_{\tilde{h}_\alpha}\leq\varepsilon.
\end{align}
Note that $ \varphi_{x_\alpha}(D_{2r\mu_\alpha}(0))=\mathfrak{D}_{t_\alpha}(x_\alpha)$, we have
\begin{equation*}\begin{split}
\varepsilon
&=\int_{\mathfrak{D}_{t_\alpha}(x_\alpha)}|\hat{u}_\alpha|^{2^*}\,d\sigma_{\hat{h}}
=\int_{\varphi_{x_\alpha}(D_{2r\mu_\alpha }(0))}|\hat{u}_\alpha|^{2^*}\,d\sigma_{\hat{h}}\\
&=\int_{\varphi_{x_\alpha}(\mu_\alpha D_{2r}(0))}|\hat{u}_\alpha|^{2^*}\,d\sigma_{\hat{h}}
=\int_{D_{2r}(0)}|\tilde{u}_\alpha|^{2^*}\,d\sigma_{\tilde{h}_\alpha}.
\end{split}\end{equation*}

Here $r_0>0$ can be chosen smaller again, such that for any $0<\mu\leq1$ and any $x_0\in M$, we can assume that
\begin{equation}\label{volume form}
\begin{split}
\frac{1}{2}\int_{\mathbb{R}^{n+1}_+}y^{1-2\gamma}|\nabla u|^2\,dxdy
\leq&\int_{\mathbb{R}^{n+1}_+}\tilde{\rho}^{1-2\gamma}_{x_0,\mu}|\nabla u|^2_{\tilde{g}_{x_0,\mu}}\,dv_{\tilde{g}_{x_0,\mu}}\\
\leq&2\int_{\mathbb{R}^{n+1}_+}y^{1-2\gamma}|\nabla u|^2\,dxdy,
\end{split}
\end{equation}
where  $u\in \overline{W}^{1,2}(\mathbb{R}^{n+1}_+,y^{1-2\gamma})$, supp$(u)\subset B^+_{2\mu^{-1}r_0}(0)$, $\tilde{\rho}_{x_0,\mu}(z)=\mu^{-1}\rho(\varphi_{x_0}(\mu z))$ and $\tilde{g}_{x_0,\mu}(z)=(\varphi^*_{x_0}g)(\mu z)$. And for $u\in L^1( \mathbb{R}^n)$ such that supp$(u)\subset D_{2\mu^{-1}r_0}(0)$, we can also assume that
$$
\frac{1}{2}\int_{\mathbb{R}^n}|u|\,dx
\leq\int_{\mathbb{R}^n}|u|\,d\sigma_{\tilde{h}_{x_0,\mu}}
\leq2\int_{\mathbb{R}^n}|u|\,dx,
$$
where $\tilde{h}_{x_0,\mu}(x)=(\varphi^*_{x_0}\hat{h})(\mu x)$.

Let  $\tilde{\eta}\in \mathcal C^\infty_0(\mathbb{R}^{n+1}_+)$ be a cutoff function satisfying
\begin{equation}\label{cutoff function}
\left\{
\begin{split}
&0\leq\tilde{\eta}\leq1,\\
&\tilde{\eta}\equiv1 \ \  \hbox{in} \ B_{1/4}^+(0),\\
&\tilde{\eta}\equiv0 \ \  \hbox{in} \ \mathbb{R}^{n+1}_+\setminus B_{3/4}^+(0).\\
\end{split}
\right.
\end{equation}
Then we set $\tilde{\eta}_\alpha(z)=\tilde{\eta}(r^{-1}_0\mu_\alpha z)$.

\begin{claim}
$\{\tilde{\eta}_\alpha\tilde{u}_\alpha\}_{\alpha\in\mathbb{N}}$ is uniformly bounded in $W^{1,2}(\mathbb{R}^{n+1}_+,y^{1-2\gamma})$.
\end{claim}

\begin{proof}
Note that
\begin{equation*}\begin{split}
\int_{\mathbb{R}^{n+1}_+}&\tilde{\rho}_\alpha^{1-2\gamma}
|\nabla(\tilde{\eta}_\alpha\tilde{u}_\alpha)|^2_{\tilde{g}_\alpha}\,dv_{\tilde{g}_\alpha}
+\int_{\mathbb{R}^{n+1}_+}\tilde{\rho}_\alpha^{1-2\gamma}(\tilde{\eta}_\alpha\tilde{u}_\alpha)^2
\,dv_{\tilde{g}_\alpha}\\
\leq&\int_{\mathbb{R}^{n+1}_+}\tilde{\rho}_\alpha^{1-2\gamma}
(2|\nabla\tilde{\eta}_\alpha|^2_{\tilde{g}_\alpha}+\tilde{\eta}^2_\alpha)
\tilde{u}^2_\alpha\, dv_{\tilde{g}_\alpha}
+2\int_{\mathbb{R}^{n+1}_+}\tilde{\rho}_\alpha^{1-2\gamma}\tilde{\eta}^2_\alpha
|\nabla\tilde{u}_\alpha|^2_{\tilde{g}_\alpha}\,dv_{\tilde{g}_\alpha}\\
\leq& C\int_X\rho^{1-2\gamma}\hat{u}_\alpha^2\,dv_g
+C\int_X\rho^{1-2\gamma}|\nabla\hat{u}_\alpha|^2_g\,dv_g\leq C,
\end{split}\end{equation*}
since $\{\hat{u}_\alpha\}_{\alpha\in\mathbb{N}}$ is uniformly bounded in $W^{1,2}(X,\rho^{1-2\gamma})$. Combining this with \eqref{volume form}, we obtain that $\{\tilde{\eta}_\alpha\tilde{u}_\alpha\}_{\alpha\in\mathbb{N}}$ is uniformly bounded in $W^{1,2}(\mathbb{R}^{n+1}_+,y^{1-2\gamma})$, as desired.
\end{proof}

Due to the weak compactness of $W^{1,2}(\mathbb{R}^{n+1}_+,y^{1-2\gamma})$, there exists some
$u$ in $W^{1,2}(\mathbb{R}^{n+1}_+,y^{1-2\gamma})$ such that
$\tilde{\eta}_\alpha\tilde{u}_\alpha\rightharpoonup u$ in
$W^{1,2}(\mathbb{R}^{n+1}_+,y^{1-2\gamma})$ as $\alpha\rightarrow+\infty$.

\vskip 0.1in

{\bf Step 3.} The weak convergence is in fact strong via $\varepsilon$-regularity estimates.

\begin{claim}
There exists $\varepsilon_1=\varepsilon_1(\gamma,n)\in(0,\varepsilon_0)$ such that for any $0<r<r_0/8$, we have $\tilde{\eta}_\alpha\tilde{u}_\alpha\rightarrow u$
in $W^{1,2}(B^+_{2r}(0),y^{1-2\gamma})$ as $\alpha\rightarrow+\infty$.
\end{claim}

\begin{proof}
Given $r$ sufficiently small, to be determined later, for any $z_0\in\mathbb{R}^{n+1}_+$,
let $\psi\in \mathcal C^\infty_0(B^+_r(z_0))\cap W^{1,2}(\mathbb{R}^{n+1}_+,y^{1-2\gamma})$. Let $\hat{\psi}_\alpha(z)=\mu_\alpha^{-\frac{n-2\gamma}{2}}\psi(\mu^{-1}_\alpha\varphi^{-1}_{x_\alpha}(z))$ for $z\in\varphi_{x_\alpha}(B^+_r(z_0))$.
Since $\{\hat{u}_\alpha\}$ satisfies the asymptotic equation \eqref{interior equation}, then we have
\begin{equation*}\begin{split}
o(1)\|\psi\|_{\overline{W}^{1,2}(\mathbb{R}^{n+1}_+,y^{1-2\gamma})}
&=o(1)\|\hat{\psi}_\alpha\|_{\overline{W}^{1,2}(X,\rho^{1-2\gamma})}\\
&=\int_{\varphi_{x_\alpha}(\mu_\alpha B^+_r(z_0))}\rho^{1-2\gamma}\langle\nabla\hat{u}_\alpha,\nabla\hat{\psi}_\alpha\rangle_g \,dv_g\\
&=\int_{B^+_r(z_0)}(\mu^{-1}_\alpha\rho)^{1-2\gamma}
\langle\nabla(\tilde{\eta}_\alpha\tilde{u}_\alpha),\nabla\psi\rangle_{\tilde{g}_\alpha}
\,dv_{\tilde{g}_\alpha},
\end{split}\end{equation*}
Here we need $|z_0|+r<1/4\mu^{-1}_\alpha r_0$ since $\tilde{\eta}_\alpha\equiv1$ in $B^+_{1/4\mu^{-1}_\alpha r_0}(0)$ by (\ref{cutoff function}).

It is easy to check that $\mu^{-1}_\alpha\rho \rightarrow y$ as $\alpha\rightarrow+\infty$ since  $|d(\mu^{-1}_\alpha\rho)|_{\tilde{g}_\alpha}=1$ on $\mathbb{R}^n$ and ${\tilde{g}_\alpha}\rightarrow (|dx|^2+dy^2)$. Then we have the asymptotic equation
\begin{equation}\label{rescaling interior equation}
 -\divergence(y^{1-2\gamma}\nabla (\tilde{\eta}_\alpha\tilde{u}_\alpha))=o(1) \ \ \hbox{in} \ \ B^+_r(z_0).
\end{equation}
Since $\tilde{\eta}_\alpha \tilde{u}_\alpha\rightharpoonup u$ weakly in $W^{1,2}(\mathbb{R}^{n+1}_+,y^{1-2\gamma})$, we simultaneously get that
\begin{equation}\label{Euclidean interior equation}
-\divergence(y^{1-2\gamma}\nabla u)=0 \ \ \hbox{in} \ \ B^+_r(z_0).
\end{equation}

Now let $\psi\in W^{1,2}(B^+_r(z_0),y^{1-2\gamma})$. Then multiplying both sides of equation \eqref{rescaling interior equation} by $\psi$ and integrating by parts, we get
\begin{equation}\label{equation21}\begin{split}
o(1)\|\psi\|_{W^{1,2}(B^+_r(z_0),y^{1-2\gamma})}
=&\int_{\partial'B^+_r(z_0)}
\lim_{y\rightarrow0}y^{1-2\gamma}\partial_y(\tilde{\eta}_\alpha\tilde{u}_\alpha)\psi \,d\sigma_{\tilde{h}_\alpha}\\
&+\int_{B^+_r(z_0)}y^{1-2\gamma}
\langle\nabla(\tilde{\eta}_\alpha\tilde{u}_\alpha),\nabla\psi\rangle_{\tilde{g}_\alpha}
\,dv_{\tilde{g}_\alpha}.
\end{split}\end{equation}
On the other hand, using \eqref{interior equation} and \eqref{boundary equation}, and the definition of $\hat{\psi}_\alpha$, we have
\begin{equation}\label{equation22}\begin{split}
\int_{B^+_r(z_0)}&y^{1-2\gamma}
\langle\nabla(\tilde{\eta}_\alpha\tilde{u}_\alpha),\nabla\psi\rangle_{\tilde{g}_\alpha}
\,dv_{\tilde{g}_\alpha}\\
=&\int_{\varphi_{x_\alpha}(\mu_\alpha B^+_r(z_0))}\rho^{1-2\gamma}\langle\nabla\hat{u}_\alpha,\nabla\hat{\psi}_\alpha\rangle_g\,dv_g\\
=&-\int_M\lim_{\rho\rightarrow0}\rho^{1-2\gamma}(\partial_\rho\hat{u}_\alpha)\hat{\psi}_\alpha \,d\sigma_{\hat{h}}
+o(1)\|\hat{\psi}_\alpha\|_{W^{1,2}(X,\rho^{1-2\gamma})}\\
=&\int_M|\hat{u}_\alpha|^{2^*-2}\hat{u}_\alpha\hat{\psi}_\alpha \,d\sigma_{\hat{h}}
+o(1)\|\hat{\psi}_\alpha\|_{W^{1,2}(X,\rho^{1-2\gamma})}\\
=&\int_{\partial'B^+_r(z_0)}|\tilde{\eta}_\alpha\tilde{u}_\alpha|^{2^*-2}
(\tilde{\eta}_\alpha\tilde{u}_\alpha)\psi \,d\sigma_{\tilde{h}_\alpha}
+o(1)\|\hat{\psi}_\alpha\|_{W^{1,2}(X,\rho^{1-2\gamma})}.
\end{split}\end{equation}
 Since $\|\psi\|_{W^{1,2}(B^+_r(z_0),y^{1-2\gamma})}=\|\hat{\psi}_\alpha\|_{W^{1,2}(X,\rho^{1-2\gamma})}$, combining expressions \eqref{equation21} and \eqref{equation22} then we have
\begin{equation*}\begin{split}
o(1)\|\psi\|_{W^{1,2}(B^+_r(z_0),y^{1-2\gamma})}
=&\int_{\partial'B^+_r(z_0)}
\lim_{y\rightarrow0}y^{1-2\gamma}\partial_y(\tilde{\eta}_\alpha\tilde{u}_\alpha)\psi \,d\sigma_{\tilde{h}_\alpha}\\
&+\int_{\partial'B^+_r(z_0)}|\tilde{\eta}_\alpha\tilde{u}_\alpha|^{2^*-2}
(\tilde{\eta}_\alpha\tilde{u}_\alpha)\psi \,d\sigma_{\tilde{h}_\alpha},
\end{split}\end{equation*}
i.e.
$$
-\lim_{y\rightarrow0}y^{1-2\gamma}\partial_y(\tilde{\eta}_\alpha\tilde{u}_\alpha)
=|\tilde{\eta}_\alpha\tilde{u}_\alpha|^{2^*-2}(\tilde{\eta}_\alpha\tilde{u}_\alpha)+o(1)
\ \ \hbox{on} \ \ \partial'B^+_r(z_0).
$$
Meanwhile, since $\tilde{\eta}_\alpha \tilde{u}_\alpha\rightharpoonup u$ weakly in $W^{1,2}(\mathbb{R}^{n+1}_+,y^{1-2\gamma})$, the same argument as above gives that
$$
-\lim_{y\rightarrow 0}y^{1-2\gamma}\partial_yu=|u|^{2^*-2}u
\ \ \hbox{on} \ \ \partial'B^+_r(z_0).
$$
If we denote by
$$
\Gamma_\alpha
:=|\tilde{\eta}_\alpha\tilde{u}_\alpha|^{2^*-2}(\tilde{\eta}_\alpha\tilde{u}_\alpha)
-|u|^{2^*-2}u
-|\tilde{\eta}_\alpha\tilde{u}_\alpha-u|^{2^*-2}(\tilde{\eta}_\alpha\tilde{u}_\alpha-u),
$$
then
\begin{equation}
\left\{
\begin{split}
-\divergence(y^{1-2\gamma}\nabla (\tilde{\eta}_\alpha\tilde{u}_\alpha-u))=o(1) &  \quad \hbox{in}  \ \ B^+_r(z_0),\\
-\lim_{y\rightarrow0}y^{1-2\gamma}\partial_y(\tilde{\eta}_\alpha\tilde{u}_\alpha-u)
=|\tilde{\eta}_\alpha\tilde{u}_\alpha-u|^{2^*-2}(\tilde{\eta}_\alpha\tilde{u}_\alpha-u)+\Gamma_\alpha+o(1)
&\quad   \hbox{on} \ \ \partial'B^+_r(z_0).
\end{split} \right.
\end{equation}
We have proved in \eqref{energy bound} that for any $r>0$ and $\varepsilon_1\in(0,\varepsilon_0)$, there exists a sequence $\{\mu_\alpha\}_{\alpha\in\mathbb{N}}$ such that, if $|z_0|+r< r_0\leq\mu^{-1}_\alpha r_0$, it holds that
$$
\int_{\partial'B^+_r(z_0)}|\tilde{u}_\alpha|^{2^*}dx\leq\frac{\varepsilon_1}{2}.
$$
Therefore we  can also choose small $r\in (0,\frac{r_0}{3})$ and $|z_0|<2r$ such that
$$
\int_{\partial'B^+_r(z_0)}|\tilde{\eta}_\alpha\tilde{u}_\alpha-u|^{2^*}dx\leq\varepsilon_1.
$$
We claim that $\Gamma_\alpha=o(1)$ in the sense that for any $\phi\in W^{1,2}(\mathbb{R}^{n+1}_+,y^{1-2\gamma})'$, we have
$$
\int_{\partial'B^+_r(z_0)}|\Gamma_\alpha \phi| d\sigma_{\hat{h}}=o(1)||\phi||_{L^{2^*}(\partial'B^+_r(z_0))} \ \ \hbox{as}   \  \alpha\to +\infty.
$$
We can use the same arguments as in the proof of Lemma \ref{limit lemma} to show this claim.

Then by Lemma \ref{epsilon regularity lemma} with $\varepsilon=\varepsilon_1$ and Prposition \ref{fractional compact embedding}, we can prove that $\tilde{\eta}_\alpha\tilde{u}_\alpha\rightarrow u$ in $W^{1,2}(B^+_r(z_0),y^{1-2\gamma})$ for $|z_0|<2r$, then by the finite covering we can prove that $\tilde{\eta}_\alpha\tilde{u}_\alpha\rightarrow u$ in $W^{1,2}(B^+_{2r}(0),y^{1-2\gamma})$ for $0<r<r_0/8$.

\end{proof}

Applying Claim 3, noting that $\tilde{\eta}_\alpha\tilde{u}_\alpha\rightarrow u$ in $W^{1,2}(B^+_{2r}(0),y^{1-2\gamma})$, and that $\tilde{\eta}_\alpha\equiv1$ in $D_{1/4\mu^{-1}_\alpha r_0}$, since $0<\mu_\alpha\leq1$ and $r\in(0,r_0/8)$, we have
\begin{equation*}\begin{split}
\varepsilon&=\int_{D_{2r}(0)}|\tilde{u}_\alpha|^{2^*}\,d\sigma_{\tilde{h}_\alpha}
=\int_{D_{2r}(0)}|\tilde{\eta}_\alpha\tilde{u}_\alpha|^{2^*}\,d\sigma_{\tilde{h}_\alpha}\\
&\leq2\int_{D_{2r}(0)}|u|^{2^*}\,dx+o(1),
\end{split}\end{equation*}
where we used $\tilde{\eta}_\alpha\tilde{u}_\alpha\rightarrow u$ in $L^{2^*}(D_{2r}(0),|dx|^2)$ as $\alpha\rightarrow+\infty$ by Proposition \ref{weighted trace sobolev embedding}.
So $u\neq0$.

\begin{claim}
$\lim_{\alpha\rightarrow+\infty}\mu_\alpha=0$.
\end{claim}
In fact, if $\mu_{\alpha}\rightarrow\mu_0>0$, then $\tilde{\eta}_\alpha\tilde{u}_\alpha\rightharpoonup0$ in $W^{1,2}(B^+_{2r}(0),y^{1-2\gamma})$ since $\hat{u}_\alpha\rightharpoonup0$ in $W^{1,2}(X,\rho^{1-2\gamma})$. But $u\neq0$,  which is a contradiction.

\begin{claim}
For any $0<\mu_0\leq1$, $\tilde{u}_\alpha\rightarrow u$ strongly in $W^{1,2}(B^+_{\mu^{-1}_0}(0),y^{1-2\gamma})$ as $\alpha\rightarrow+\infty$, and $u$ is a weak solution of equation (\ref{Liouville equation}).
\end{claim}
\begin{proof}
Let $0<\mu_0\leq 1$, by Claim 4, we know $0<\mu_\alpha\leq\mu_0$ for $\alpha$ large. Then \eqref{energy bound} holds for $|z_0|+r<\mu^{-1}_0r_0$. By the same arguments, it is easy to check that
$$
\tilde{\eta}_\alpha\tilde{u}_\alpha\rightarrow u \ \ \hbox{in} \ \ W^{1,2}(B^+_{2r\mu^{-1}_0}(0),y^{1-2\gamma}).
$$
For $\alpha$ large, we have $\tilde{\eta}_\alpha\equiv1\ \ \hbox{in} \ \ B^+_{2r\mu^{-1}_0}(0)$, so we have
$$
\tilde{u}_\alpha\rightarrow u \ \ \hbox{in} \ \ W^{1,2}(B^+_{2r\mu^{-1}_0}(0),y^{1-2\gamma})
$$
strongly as $\alpha\rightarrow+\infty$.\\

We finally claim that $u$ solves the following boundary problem.
\begin{equation}
\left\{
\begin{split}
-\divergence(y^{1-2\gamma}\nabla u)=0& \ \ \hbox{in} \ \ \mathbb{R}^{n+1}_+,\\
-\lim_{y\rightarrow0}y^{1-2\gamma}\partial_y u=|u|^{2^*-2}u& \ \ \hbox{on} \ \ \mathbb{R}^n.\\
\end{split}
\right.
\end{equation}
Since $0<\mu_0\leq1$ is arbitrary, we have $\tilde{u}_\alpha\to u$ strongly in $W^{1,2}(B^+_R(0),y^{1-2\gamma})$ for any large $R>0$.
Without loss of generality, let $\psi\in \mathcal C^\infty_0(\mathbb{R}^{n+1}_+)$ and $\hbox{supp}\,\psi\subset B^+_0(R_0)$ for some $R_0>0$. Set
$$
\psi_\alpha(z)=\mu^{-\frac{n-2\gamma}{2}}_\alpha\psi(\mu^{-1}_\alpha\varphi^{-1}_{x_\alpha}(z)).
$$
For $\alpha$ large enough, we have
$$
\int_X\rho^{1-2\gamma}\langle\nabla\hat{u}_\alpha,\nabla\psi_\alpha\rangle_gdv_g
=\int_{\mathbb{R}^{n+1}_+}\tilde{\rho}^{1-2\gamma}_\alpha\langle\nabla(\tilde{\eta}_\alpha\tilde{u}_\alpha),\nabla\psi\rangle_{\tilde{g}_\alpha}dv_{\tilde{g}_\alpha},
$$
and
$$
\int_M|\hat{u}_\alpha|^{2^*-2}\hat{u}_\alpha\psi_\alpha dv_g
=\int_{\mathbb{R}^n}|\tilde{\eta}_\alpha\tilde{u}_\alpha|^{2^*-2}
(\tilde{\eta}_\alpha\tilde{u}_\alpha)\psi\, dv_{\tilde{g}_\alpha}.
$$
Note that $\tilde{g}_\alpha\to|dx|^2+dy^2$ in $\mathcal C^1(B^+_R(0))$ as $\alpha\to+\infty$, $\{\hat{u}_\alpha\}$ is a Palais-Smale sequence for $I^\gamma_g$ and $\tilde{\eta}_\alpha\tilde{u}_\alpha\to u$ in $W^{1,2}(B^+_R(0))$ for any $R>0$. Then we have
$$
\int_{\mathbb{R}^{n+1}_+}y^{1-2\gamma}\langle\nabla u,\nabla\psi\rangle \,dxdy
-\int_{\mathbb{R}^n}|u|^{2^*-2}u\psi\, dxdy=0,
$$
which yields our desired result.

\end{proof}

{\bf Step 4.} The Palais-Smale sequence subtracted by a bubble is still a Palais-Smale sequence. Define
\begin{equation}\label{bubble}
    \left\{
  \begin{array}{ll}
  \hat{w}_\alpha(z)=\hat{\eta}_\alpha(z)\mu^{-(n-2\gamma)/2}_\alpha u(\mu^{-1}_\alpha\varphi^{-1}_{x_\alpha}(z)) ,
 & \ z\in\varphi_{x_\alpha}(B^+_{2r_0}(0)),\\
  \hat{w}_\alpha(z)=0, & \ \ \hbox{otherwise},
 \end{array}
\right.
\end{equation}
where  $\hat{\eta}_\alpha$ is a cut-off function satisfying $\hat{\eta}_\alpha=1$ in $\varphi_{x_\alpha}(B^+_{r_0}(0))$ and  $\hat{\eta}_\alpha=0$ in $M\setminus\varphi_{x_\alpha}(B^+_{2r_0}(0))$. Here we have $\mathfrak{B}^+_{2r_0}(x_\alpha)=\varphi_{x_\alpha}(B^+_{2r_0}(0))$. Let $\hat{v}_\alpha=\hat{u}_\alpha-\hat{w}_\alpha$. We claim:
\begin{itemize}
\item[(i)] $\hat{v}_\alpha\rightharpoonup0$ in $W^{1,2}(X,\rho^{1-2\gamma})$ as $\alpha\rightarrow+\infty$;

\item[(ii)] $DI^\gamma_g(\hat{v}_\alpha)\rightarrow0$ in $W^{1,2}(X,\rho^{1-2\gamma})'$ as $\alpha\rightarrow+\infty$;

\item[(iii)] $I^\gamma_g(\hat{v}_\alpha)=I^\gamma_g(\hat{u}_\alpha)-\tilde{E}(u)+o(1)$ as  $\alpha\rightarrow+\infty$;

\item[(iv)] $ \{\hat{v}_\alpha\}_{\alpha\in\mathbb{N}} $ is also a Palais-Smale sequence for $I^\gamma_g$.
\end{itemize}

The proof of these claims follows from:
(i) Since $\hat{u}_\alpha\rightharpoonup0$ in $W^{1,2}(X,\rho^{1-2\gamma})$ as $\alpha\rightarrow+\infty$, it suffices to prove $\hat{w}_\alpha\rightharpoonup0$ in $W^{1,2}(X,\rho^{1-2\gamma})$ as $\alpha\rightarrow+\infty$. First, we prove that $\int_M\hat{w}_\alpha\psi d\sigma_{\hat{h}}=o(1)$ as $\alpha\rightarrow+\infty$ for any $\psi\in \mathcal C^\infty(\overline{X})$. Given $R>0$, then
\begin{equation}\label{equation30}
\int_M\hat{w}_\alpha\psi \,d\sigma_{\hat{h}}=\int_{\mathfrak{D}_{\mu_\alpha R}(x_\alpha)}\hat{w}_\alpha\psi \,d\sigma_{\hat{h}}+\int_{M\setminus \mathfrak{D}_{\mu_\alpha R}(x_\alpha)}\hat{w}_\alpha\psi\, d\sigma_{\hat{h}}.
\end{equation}
Note that $\tilde{h}_\alpha(x)=(\varphi^*_{x_\alpha}\hat{h})(\mu_\alpha x)$. Using \eqref{bubble} we have
\begin{equation*}\begin{split}
\int_{\mathfrak{D}_{\mu_\alpha R}(x_\alpha)}\hat{w}_\alpha\psi \,d\sigma_{\hat{h}}
&=\int_{\mathfrak{D}_{\mu_\alpha R}(x_\alpha)}\hat{\eta}_\alpha(x)\mu^{-\frac{n-2\gamma}{2}}_\alpha u(\mu^{-1}_\alpha\varphi^{-1}_{x_\alpha}(x))\psi(x)\, d\sigma_{\hat{h}}\\
&=\mu_\alpha^{\frac{n+2\gamma}{2}}\int_{D_{R}(0)}\hat{\eta}_\alpha(\varphi_{x_\alpha}(\mu_\alpha x))u(x)\psi(\varphi_{x_\alpha}(\mu_\alpha x))\, d\sigma_{\tilde{h}_\alpha}\\
&\leq C\|\psi\|_{L^\infty(M)} \mu_\alpha^{\frac{n+2\gamma}{2}}\int_{D_{R}(0)}|u(x)|\,dx.
\end{split}\end{equation*}
Similarly, we can deal with the second term in the right hand side of \eqref{equation30}:
\begin{equation*}\begin{split}
&\int_{M\setminus \mathfrak{D}_{\mu_\alpha R}(x_\alpha)}\hat{w}_\alpha\psi\, d\sigma_{\hat{h}}
=\int_{\mathfrak{D}_{2r_0}(x_\alpha)\setminus \mathfrak{D}_{\mu_\alpha R}(x_\alpha)}\hat{w}_\alpha\psi \,d\sigma_{\hat{h}}\\
&\quad\leq C\|\psi\|_{L^\infty(M)} \mu_\alpha^{\frac{n+2\gamma}{2}}\int_{D_{2r_0\mu^{-1}_\alpha}(0)\setminus D_R(0)}|u(x)|\,dx\\
&\quad\leq C\|\psi\|_{L^\infty(M)} \mu_\alpha^{\frac{n+2\gamma}{2}}
\left(\int_{D_{2r_0\mu^{-1}_\alpha}(0)\setminus D_R(0)}|u(x)|^{2^*}dx\right)^{\frac{1}{2^*}}
\left(\int_{D_{2r_0\mu^{-1}_\alpha}(0)\setminus D_R(0)}dx\right)^{\frac{n+2\gamma}{2n}}\\
&\quad\leq C\|\psi\|_{L^\infty(M)}\left(\int_{D_{2r_0\mu^{-1}_\alpha}(0)\setminus D_R(0)}|u(x)|^{2^*}\,dx\right)^{\frac{1}{2^*}}.
\end{split}\end{equation*}
Since $u\in L^{2^*}(\mathbb{R}^n,|dx|^2)$ and $\mu_\alpha\rightarrow0$ as $\alpha\rightarrow+\infty$, taking $R$ large enough we get $\int_M\hat{w}_\alpha\psi d\sigma_{\hat{h}}=o(1)$ as $\alpha\rightarrow+\infty$.\\

 Next, we will show that $\int_X\rho^{1-2\gamma}\langle\nabla\hat{w}_\alpha,\nabla\psi\rangle_g dv_g=o(1)$ as $\alpha\rightarrow+\infty$  for any $\psi\in \mathcal C^\infty(\overline{X})$. Let $\tilde{\eta}_\alpha(z)=\hat{\eta}_\alpha(\varphi_{x_\alpha}(\mu_\alpha z))$, $\tilde{\rho}_\alpha(z)=\mu^{-1}_\alpha\rho(\varphi_{x_\alpha}(\mu_\alpha z))$.  Noting that $\hat{w}_\alpha\equiv0$ in $X\setminus\mathfrak{B}^+_{2r_0}(x_\alpha)$, then for any $R>0$ and $\alpha$ large, we have
\begin{equation}\label{formula32}\begin{split}
\int_X\rho^{1-2\gamma}&\langle\nabla \hat{w}_\alpha,\nabla \psi\rangle_g \,dv_g
=\int_{\mathfrak{B}^+_{2r_0}(x_\alpha)}\rho^{1-2\gamma}\langle\nabla \hat{w}_\alpha,\nabla \psi\rangle_g \,dv_g\\
=&\int_{\mathfrak{B}^+_{2r_0}(x_\alpha)\setminus \mathfrak{B}^+_{R\mu_\alpha}(x_\alpha)}
\rho^{1-2\gamma}\langle\nabla \hat{w}_\alpha,\nabla \psi\rangle_g \,dv_g
+\int_{\mathfrak{B}^+_{R\mu_\alpha}(x_\alpha)}
\rho^{1-2\gamma}\langle\nabla \hat{w}_\alpha,\nabla \psi\rangle_g \,dv_g\\
=&:I_1+I_2.
\end{split}\end{equation}
By H\"{o}lder's inequality and that $u\in W^{1,2}(\mathbb{R}^{n+1}_+,y^{1-2\gamma})$, we have
\begin{equation*}\begin{split}
I_1&\leq\left(\int_{\mathfrak{B}^+_{2r_0}(x_\alpha)\setminus \mathfrak{B}^+_{R\mu_\alpha}(x_\alpha)}
\rho^{1-2\gamma}|\nabla \hat{w}_\alpha|^2_g\, dv_g\right)^{\frac{1}{2}}
\left(\int_{\mathfrak{B}^+_{2r_0}(x_\alpha)\setminus \mathfrak{B}^+_{R\mu_\alpha}(x_\alpha)}
\rho^{1-2\gamma}|\nabla \psi|^2_g \,dv_g\right)^{\frac{1}{2}}\\
&=\left(\int_{B^+_{2r_0\mu^{-1}_\alpha}(0)\setminus B^+_R(0)}
\tilde{\rho}^{1-2\gamma}_\alpha
|\nabla (\tilde{\eta}_\alpha u)|^2_{\tilde{g}_\alpha}\,dv_{\tilde{g}_\alpha}\right)^{\frac{1}{2}}
\left(\int_{\mathfrak{B}^+_{2r_0}(x_\alpha)\setminus \mathfrak{B}^+_{R\mu_\alpha}(x_\alpha)}
\rho^{1-2\gamma}|\nabla \psi|^2_g \,dv_g\right)^{\frac{1}{2}}\\
&=:\beta_\alpha(R),
\end{split}\end{equation*}
where
\begin{equation}\label{beta}
\lim_{R\rightarrow+\infty}\lim_{\alpha\rightarrow+\infty}\ \sup \beta_\alpha(R)=0.
\end{equation}
The previous limit is estimated because $u\in W^{1,2}(\mathbb{R}^{n+1}_+,y^{1-2\gamma})$, so we have for any $\alpha,R$
$$
\left(\int_{B^+_{2r_0\mu^{-1}_\alpha}(0)\setminus B^+_R(0)}
\tilde{\rho}^{1-2\gamma}_\alpha
|\nabla (\tilde{\eta}_\alpha u)|^2_{\tilde{g}_\alpha}\,dv_{\tilde{g}_\alpha}\right)^{\frac{1}{2}}
\leq C||u||_{W^{1,2}(\mathbb{R}^{n+1}_+,y^{1-2\gamma})},
$$
and  for any $\varepsilon>0$ and any $\alpha$ large, there exists $R_0>0$ such that for $R>R_0$, we have
$$
\left(\int_{\mathfrak{B}^+_{2r_0}(x_\alpha)\setminus \mathfrak{B}^+_{R\mu_\alpha}(x_\alpha)}
\rho^{1-2\gamma}|\nabla \psi|^2_g \,dv_g\right)^{\frac{1}{2}}
\leq \varepsilon.
$$

Meanwhile we have
\begin{equation*}\begin{split}
I_2&\leq\left(\int_{\mathfrak{B}^+_{R\mu_\alpha}(x_\alpha)}
\rho^{1-2\gamma}|\nabla \hat{w}_\alpha|^2_g \,dv_g\right)^{\frac{1}{2}}
\left(\int_{\mathfrak{B}^+_{R\mu_\alpha}(x_\alpha)}
\rho^{1-2\gamma}|\nabla \psi|^2_g\, dv_g\right)^{\frac{1}{2}}\\
&=\left(\int_{B^+_R(0)}\tilde{\rho}^{1-2\gamma}_\alpha
|\nabla (\tilde{\eta}_\alpha u)|^2_{\tilde{g}_\alpha}\,dv_{\tilde{g}_\alpha}\right)^{\frac{1}{2}}
\left(\int_{\mathfrak{B}^+_{R\mu_\alpha}(x_\alpha)}
\rho^{1-2\gamma}|\nabla \psi|^2_g \,dv_g\right)^{\frac{1}{2}}\\
&=o(1),
\end{split}\end{equation*}
uniformly in $R$ as $\alpha\rightarrow+\infty$. To see this, for any $R>0$,
$$
\left(\int_{B^+_R(0)}\tilde{\rho}^{1-2\gamma}_\alpha
|\nabla (\tilde{\eta}_\alpha u)|^2_{\tilde{g}_\alpha}\,dv_{\tilde{g}_\alpha}\right)^{\frac{1}{2}}
\leq C||u||_{W^{1,2}(\mathbb{R}^{n+1}_+,y^{1-2\gamma})},
$$
also in Claim 4 we have proved that
$$
\lim_{\alpha\to +\infty}\mu_\alpha=0
$$
and note that $\psi\in W^{1,2}(X,\rho^{1-2\gamma})$.
Since $R>0$ is arbitrary, \eqref{formula32} implies that
$$
\int_X\rho^{1-2\gamma}\langle\nabla \hat{w}_\alpha,\nabla \psi\rangle_g\, dv_g=o(1)
$$
as $\alpha\rightarrow+\infty$.\\

(ii) For any $\psi\in W^{1,2}(X,\rho^{1-2\gamma})$, the proof of (i), and Propositions \ref{weighted trace sobolev embedding} and \ref{fractional compact embedding} imply that
$$
DI^\gamma_g(\hat{w}_\alpha)\cdot\psi
=\int_X\rho^{1-2\gamma}\langle\nabla\hat{w}_\alpha,\nabla\psi\rangle_g\,dv_g
-\int_M|\hat{w}_\alpha|^{2^*-2}\hat{w}_\alpha\psi \,d\sigma_{\hat{h}}\rightarrow0, \ \ \hbox{as} \ \ \alpha\rightarrow+\infty.
$$
On the other hand, we have
\begin{equation*}\begin{split}
DI^\gamma_g(\hat{v}_\alpha)\cdot\psi
&=\int_X\rho^{1-2\gamma}\langle\nabla \hat{v}_\alpha,\nabla\psi\rangle_g\,dv_g-\int_M|\hat{v}_\alpha|^{2^*-2}\hat{v}_\alpha\psi \,d\sigma_{\hat{h}}\\
&=DI^\gamma_g(\hat{u}_\alpha)\cdot\psi-DI^\gamma_g(\hat{w}_\alpha)\cdot\psi-\int_M\Phi_\alpha\psi \,d\sigma_{\hat{h}},
\end{split}\end{equation*}
where
$$
\Phi_\alpha=|\hat{u}_\alpha-\hat{w}_\alpha|^{2^*-2}
(\hat{u}_\alpha-\hat{w}_\alpha)+|\hat{w}_\alpha|^{2^*-2}\hat{w}_\alpha
-|\hat{u}_\alpha|^{2^*-2}\hat{u}_\alpha.
$$
Following the same argument of \cite{D-H-R} (pp. 39-40), we can prove that
$$
\int_M\Phi_\alpha\psi \,d\sigma_{\hat{h}}\rightarrow0 \ \ \ \hbox{as} \ \alpha\rightarrow+\infty.
$$
Then we get that $DI^\gamma_g(\hat{v}_\alpha)\rightarrow0$ in $W^{1,2}(X,\rho^{1-2\gamma})'$ as $\alpha\rightarrow+\infty$, since $\{\hat{u}_\alpha\}_{\alpha\in\mathbb{N}}$ is a Palais-Smale sequence for $I^\gamma_g$.\\

(iii)  Note that $\hat{v}_\alpha=\hat{u}_\alpha-\hat{w}_\alpha$ and $\hat{w}_\alpha\equiv0$ in $X\setminus \mathfrak{B}^+_{2r_0}(x_\alpha)$. Given $R>0$, for $\alpha$ large, we have
\begin{equation}\label{equation33}\begin{split}
\int_X\rho^{1-2\gamma}&|\nabla\hat{v}_\alpha|^2_g\,dv_g\\
=&\int_{\mathfrak{B}^+_{2r_0}(x_\alpha)}\rho^{1-2\gamma}|\nabla\hat{v}_\alpha|^2_g\,dv_g
+\int_{X\setminus \mathfrak{B}^+_{2r_0}(x_\alpha)}\rho^{1-2\gamma}|\nabla\hat{u}_\alpha|^2_g\,dv_g\\
=&\int_{\mathfrak{B}^+_{\mu_\alpha R}(x_\alpha)}\rho^{1-2\gamma}|\nabla\hat{v}_\alpha|^2_g\,dv_g
+\int_{\mathfrak{B}^+_{2r_0}(x_\alpha)\setminus \mathfrak{B}^+_{\mu_\alpha R}(x_\alpha)}\rho^{1-2\gamma}|\nabla\hat{v}_\alpha|^2_g\,dv_g\\
&+\int_{X\setminus \mathfrak{B}^+_{2r_0}(x_\alpha)}\rho^{1-2\gamma}|\nabla\hat{u}_\alpha|^2_g\,dv_g\\
=&:I_1+I_2+\int_{X\setminus \mathfrak{B}^+_{2r_0}(x_\alpha)}\rho^{1-2\gamma}|\nabla\hat{u}_\alpha|^2_g\,dv_g.
\end{split}\end{equation}
Since $\tilde{\eta}_\alpha\tilde{u}_\alpha\rightarrow u$ in $W^{1,2}(\mathbb{R}^{n+1}_+,y^{1-2\gamma})$ as $\alpha\rightarrow+\infty$ because of Claim 5, then
\begin{equation*}\begin{split}
I_1=&\int_{\mathfrak{B}^+_{\mu_\alpha R}(x_\alpha)}\rho^{1-2\gamma}|\nabla(\hat{u}_\alpha-\hat{w}_\alpha)|^2_g\,dv_g
=\int_{B^+_R(0)}\tilde{\rho}^{1-2\gamma}_\alpha
|\nabla(\tilde{u}_\alpha-u)|^2_{\tilde{g}_\alpha}\,dv_{\tilde{g}_\alpha}\\
\leq &\,2\int_{B^+_R(0)}y^{1-2\gamma}|\nabla(\tilde{u}_\alpha-u)|^2\,dxdy
=o(1), \ \ \ \hbox{as} \ \alpha\rightarrow+\infty,
\end{split}\end{equation*}
where we have used that $\tilde{\eta}_\alpha\equiv1$ in $B^+_R(0)$ for $\alpha$ large.

On the other hand, direct computations give that
\begin{equation*}\begin{split}
\int_{\mathfrak{B}^+_{2r_0}(x_\alpha)\setminus\mathfrak{B}^+_{\mu_\alpha R}(x_\alpha)}&\rho^{1-2\gamma}|\nabla\hat{w}_\alpha|^2_g\,dv_g
=\int_{B^+_{2r_0\mu_\alpha^{-1}}(0)\setminus B^+_R(0)}\tilde{\rho}^{1-2\gamma}_\alpha|\nabla u|^2_{\tilde{g}_\alpha}\,dv_{\tilde{g}_\alpha}\\
\leq& 2\int_{B^+_{2r_0\mu_\alpha^{-1}}(0)\setminus B^+_R(0)}y^{1-2\gamma}|\nabla u|^2\,dxdy=\beta_\alpha(R),
\end{split}\end{equation*}
since $u\in W^{1,2}(\mathbb{R}^{n+1}_+,y^{1-2\gamma})$ and $\mu_\alpha\rightarrow0$ as $\alpha\rightarrow+\infty$, where $\beta_\alpha(R)$ is defined as in \eqref{beta}. Hence we get that
\begin{equation*}\begin{split}
I_2=&\int_{\mathfrak{B}^+_{2r_0}(x_\alpha)\setminus\mathfrak{B}^+_{\mu_\alpha R}(x_\alpha)}\rho^{1-2\gamma}
(|\nabla\hat{u}_\alpha|^2_g+|\nabla\hat{w}_\alpha|^2_g-2\langle\nabla\hat{u}_\alpha,
\nabla\hat{w}_\alpha\rangle_g )\,dv_g\\
=&\int_{\mathfrak{B}^+_{2r_0}(x_\alpha)\setminus\mathfrak{B}^+_{\mu_\alpha R}(x_\alpha)}\rho^{1-2\gamma}|\nabla\hat{u}_\alpha|^2_g\,dv_g+\beta_\alpha(R).
\end{split}\end{equation*}
Here we have used H\"{o}lder's inequality and the fact that $\{\hat{u}_\alpha\}$ is uniformly in $W^{1,2}(X,\rho^{1-2\gamma})$ to get
$$
\int_{\mathfrak{B}^+_{2r_0}(x_\alpha)\setminus\mathfrak{B}^+_{\mu_\alpha R}(x_\alpha)}\rho^{1-2\gamma}
\langle\nabla\hat{u}_\alpha,\nabla\hat{w}_\alpha\rangle_g \,dv_g=\beta_\alpha(R).
$$
Therefore, noting that $\tilde{u}_\alpha\rightarrow u$ in $W^{1,2}(\mathbb{R}^{n+1}_+,y^{1-2\gamma})$ as $\alpha\rightarrow+\infty$, we have from \eqref{equation33} that
\begin{equation*}\begin{split}
\int_X\rho^{1-2\gamma}&|\nabla\hat{v}_\alpha|^2_g\,dv_g\\
=&\int_X\rho^{1-2\gamma}|\nabla\hat{u}_\alpha|^2_g\,dv_g
-\int_{\mathfrak{B}^+_{\mu_\alpha R}(x_\alpha)}\rho^{1-2\gamma}|\nabla\hat{u}_\alpha|^2_g\,dv_g+\beta_\alpha(R)+o(1)\\
=&\int_X\rho^{1-2\gamma}|\nabla\hat{u}_\alpha|^2_g\,dv_g
-\int_{B^+_R(0)}\tilde{\rho}^{1-2\gamma}_\alpha|\nabla\tilde{u}_\alpha|^2_{\tilde{g}_\alpha}
\,dv_{\tilde{g}_\alpha}+\beta_\alpha(R)+o(1)\\
=&\int_X\rho^{1-2\gamma}|\nabla\hat{u}_\alpha|^2_g\,dv_g
-\int_{B^+_R(0)}y^{1-2\gamma}|\nabla u|^2\,dxdy+\beta_\alpha(R)+o(1)\\
=&\int_X\rho^{1-2\gamma}|\nabla\hat{u}_\alpha|^2_g\,dv_g
-\int_{\mathbb{R}^{n+1}_+}y^{1-2\gamma}|\nabla u|^2\,dxdy+\beta_\alpha(R)+o(1).
\end{split}\end{equation*}
In a similar way, we can get that
$$
\int_M|\hat{v}_\alpha|^{2^*}\,d\sigma_{\hat{h}}
=\int_M|\hat{u}_\alpha|^{2^*}d\sigma_{\hat{h}}-\int_{\mathbb{R}^n}|u|^{2^*}\,dx+\beta_\alpha(R)+o(1).
$$
These imply that
$$
I^\gamma_g(\hat{v}_\alpha)=I^\gamma_g(\hat{u}_\alpha)-\tilde{E}(u)+\beta_\alpha(R)+o(1).
$$
Since $R>0$ is arbitrary, we get conclusion (iii).\\

(iv) It is a direct consequence of (ii) and (iii).

\end{proof}


\section{Proof of the main Results}

{\bf Proof of Theorem \ref{main theorem}}. From Remark \ref{remark on limit function}, we have $u_\alpha\rightharpoonup u^0$ in $ W^{1,2}(X,\rho^{1-2\gamma})$ as $\alpha\rightarrow+\infty$. And $u_\alpha\rightarrow u^0$ a.e. on $M$ as $\alpha\rightarrow+\infty$.  Then $u^0\geq0$ on $M$ since $u_\alpha\geq0$. Also $\hat{u}_\alpha=u_\alpha-u^0$ satisfies the Palais-Smale condition and
$$
I^\gamma_g(\hat{u}_\alpha)=I^{\gamma,\alpha}_g(u_\alpha)-I^{\gamma,\infty}_g(u^0)+o(1).
$$
If $\hat{u}_\alpha\rightarrow0$ in $W^{1,2}(X,\rho^{1-2\gamma})$ as $\alpha\rightarrow+\infty$, then the theorem is proved. If $\hat{u}_\alpha\rightharpoonup0$ but not strongly in $W^{1,2}(X,\rho^{1-2\gamma})$ as $\alpha\rightarrow+\infty$, using Lemma \ref{first bubble lemma}, we can obtain a new Palais-Smale sequence $\{\hat{u}^1_\alpha\}_{\alpha\in\mathbb{N}}$ satisfying
$$
I^\gamma_g(\hat{u}^1_\alpha)= I^\gamma_g(\hat{u}_\alpha)-\tilde{E}(u)+o(1).
$$
Now again, either $\hat{u}^1_\alpha\rightarrow0$ in $W^{1,2}(X,\rho^{1-2\gamma})$ as $\alpha\rightarrow+\infty$, in which case the theorem holds, or  $\hat{u}^1_\alpha\rightharpoonup0$ but not strongly in $W^{1,2}(X,\rho^{1-2\gamma})$ as $\alpha\rightarrow+\infty$, in which case we again use Lemma \ref{first bubble lemma}. Since $\{I^{\gamma,\alpha}_g(u_\alpha)\}_{\alpha\in\mathbb{N}}$ is uniformly bounded, after a finite number of induction steps, we get the last Palais-Smale sequence $\{\hat{u}^m_\alpha\}_{\alpha\in\mathbb{N}}$ $(m>1)$ with $I^\gamma_g(\hat{u}^m_\alpha)\rightarrow \beta<\beta_0$. Then by Lemma \ref{criterion lemma}, we can get that $\hat{u}^m_\alpha\rightarrow0$ in $W^{1,2}(X,\rho^{2\gamma-1})$ as $\alpha\rightarrow+\infty$. Applying Lemma \ref{first bubble lemma} in the process, we can get $\{u^j\}^m_{j=1}$ are solutions to \eqref{Liouville equation}. We will prove the positivity of $u^j$, $j=1,\cdots,m$, in Lemma \ref{nonnegative lemma}, and the relation (5) of Theorem \ref{main theorem} in Lemma \ref{interfering lemma}.

For the regularity of $u^j$ we can use Lemma \ref{weighted regularity1} and \ref{weighted regularity2} in the Appendix. Then the proof of the theorem is finished.

\begin{lemma}\label{interfering lemma}
For any integer $k$ in $[1,m]$, and any integer $l$ in $[0,k-1]$, there exist an integer $s$ and sequences $\{y^j_\alpha\}_{\alpha\in \mathbb{N}}\subset M$ and $\{\lambda^j_\alpha>0\}_{\alpha\in \mathbb{N}}$, $j=1,\cdots,s$, such that $d_{\hat{h}}(x^k_\alpha, y^j_\alpha)/\mu^k_\alpha$ is bounded and $\lambda^j_\alpha/\mu^k_\alpha\rightarrow0$ as $\alpha\rightarrow+\infty$, and for any $R,R'>0$,
\begin{align}\label{*}
\int_{\mathfrak{D}_{R\mu^k_\alpha}(x^k_\alpha)\setminus
\cup^s_{j=1}\mathfrak{D}_{R'\lambda^j_\alpha}(y^j_\alpha)}
|\hat{u}_\alpha-\sum^l_{i=1}u^i_\alpha-u^k_\alpha|^{2^*}d\sigma_{\hat{h}}
=o(1)+\epsilon_\alpha(R'),
\end{align}
where
$$
\lim_{R'\to+\infty}\lim_{\alpha\to+\infty}\sup\epsilon_\alpha(R')=0,
$$
and $\{u^i_\alpha\}$ is derived from the rescaling of $u^i$ we obtained in the above proof of Theorem \ref{main theorem}, and $\{x^i_\alpha\}$ is the $i$-th likely blow up points sequence.
\end{lemma}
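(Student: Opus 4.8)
The plan is to read off \eqref{*} directly from the bubble decomposition produced by iterating Lemma \ref{first bubble lemma} in the proof of Theorem \ref{main theorem}, namely $\hat u_\alpha=\sum_{i=1}^m u^i_\alpha+o(1)$ in $W^{1,2}(X,\rho^{1-2\gamma})$, hence (by Proposition \ref{weighted trace sobolev embedding}) also in $L^{2^*}(M,\hat h)$; the cut-off discrepancy between $u^i_\alpha$ and $\eta^i_\alpha u^i_\alpha$ is itself $o(1)$ in these norms because $\mu^i_\alpha\to 0$. After passing to a subsequence I would assume that for every ordered pair $i\neq k$ the limits $\lim_\alpha d_{\hat h}(x^i_\alpha,x^k_\alpha)/\mu^k_\alpha$ and $\lim_\alpha\mu^i_\alpha/\mu^k_\alpha$ exist in $[0,+\infty]$, and I would then \emph{define} the objects in the statement by taking $\{(y^j_\alpha,\lambda^j_\alpha)\}_{j=1}^{s}$ to be the finite family of all $(x^i_\alpha,\mu^i_\alpha)$ with $l<i\le m$, $i\neq k$, $\lim_\alpha d_{\hat h}(x^i_\alpha,x^k_\alpha)/\mu^k_\alpha<+\infty$ and $\lim_\alpha\mu^i_\alpha/\mu^k_\alpha=0$. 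With this choice $d_{\hat h}(x^k_\alpha,y^j_\alpha)/\mu^k_\alpha$ is bounded and $\lambda^j_\alpha/\mu^k_\alpha\to 0$ by construction, and $s\le m$.

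The key preliminary step is an \emph{exclusion}: no pair $i\neq k$ can be ``comparable and close'', i.e. satisfy both $\lim_\alpha d_{\hat h}(x^i_\alpha,x^k_\alpha)/\mu^k_\alpha<+\infty$ and $0<\lim_\alpha\mu^i_\alpha/\mu^k_\alpha<+\infty$ (which is a symmetric condition on $i,k$). Indeed, with $a=\min(i,k)$, $b=\max(i,k)$ the reduced sequence $\hat u^{a-1}_\alpha=u^a_\alpha+\sum_{\iota>a}u^\iota_\alpha+o(1)$ still contains $u^b_\alpha$, and by Claim 5 in the proof of Lemma \ref{first bubble lemma} its rescaling about $(x^a_\alpha,\mu^a_\alpha)$ converges \emph{strongly} in $W^{1,2}(B^+_R(0),y^{1-2\gamma})$ for every $R>0$ to $u^a$; since the rescaling of $u^a_\alpha$ is $u^a$ itself, the sum of the rescalings of $u^{a+1}_\alpha,\dots,u^m_\alpha$ tends to $0$ strongly, in particular weakly. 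But in the comparable-and-close regime the rescaling of $u^b_\alpha$ about $(x^a_\alpha,\mu^a_\alpha)$ converges locally uniformly to a fixed translate--dilate of the positive bubble $U^{\lambda_b}_{a_b}=u^b$, while the rescalings of the remaining $u^\iota_\alpha$ have nonnegative weak limits (all $u^\iota$ are nonnegative by Lemma \ref{nonnegative lemma}); hence the weak limit of their sum dominates a positive function and cannot vanish — a contradiction. This is the only place where one genuinely needs the rigidity of Lemma \ref{first bubble lemma} together with nonnegativity, and it is the main obstacle of the proof; all cancellation of concentration profiles is ruled out precisely by positivity.

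Granting the exclusion, \eqref{*} is a finite case analysis. Put $\Omega_\alpha=\mathfrak D_{R\mu^k_\alpha}(x^k_\alpha)\setminus\cup_{j=1}^s\mathfrak D_{R'\lambda^j_\alpha}(y^j_\alpha)$; using $\hat u_\alpha-\sum_{i=1}^l u^i_\alpha-u^k_\alpha=\sum_{i\in\{l+1,\dots,m\}\setminus\{k\}}u^i_\alpha+o(1)$ on $M$ together with $\big|\sum_\iota a_\iota\big|^{2^*}\le C\sum_\iota|a_\iota|^{2^*}$ for sums of at most $m+1$ terms, and noting $\int_M|o(1)|^{2^*}\,d\sigma_{\hat h}=o(1)$, it suffices to bound $\int_{\Omega_\alpha}|u^i_\alpha|^{2^*}\,d\sigma_{\hat h}$ for each $i\in\{l+1,\dots,m\}\setminus\{k\}$. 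If $(x^i_\alpha,\mu^i_\alpha)$ is one of the $(y^j_\alpha,\lambda^j_\alpha)$ then $\mathfrak D_{R'\mu^i_\alpha}(x^i_\alpha)$ is removed, so $\int_{\Omega_\alpha}|u^i_\alpha|^{2^*}\le\int_{M\setminus\mathfrak D_{R'\mu^i_\alpha}(x^i_\alpha)}|u^i_\alpha|^{2^*}\,d\sigma_{\hat h}=\epsilon(R')+o(1)$ with $\epsilon(R')\to 0$, a bubble tail. If $d_{\hat h}(x^i_\alpha,x^k_\alpha)/\mu^k_\alpha\to+\infty$, then in the coordinates rescaled by $\mu^i_\alpha$ about $x^i_\alpha$ the set $\mathfrak D_{R\mu^k_\alpha}(x^k_\alpha)$ is either eventually disjoint from a fixed ball (giving again a tail $\epsilon(R')+o(1)$) or meets it only in a shrinking annulus (giving $o(1)$). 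If $d_{\hat h}(x^i_\alpha,x^k_\alpha)/\mu^k_\alpha$ is bounded and $\mu^i_\alpha/\mu^k_\alpha\to+\infty$, then on $\mathfrak D_{R\mu^k_\alpha}(x^k_\alpha)$ the function $u^i_\alpha$ is essentially the constant $(\mu^i_\alpha)^{-(n-2\gamma)/2}U^{\lambda_i}_{a_i}(0)$, so (recalling $(n-2\gamma)\,2^*/2=n$) $\int_{\Omega_\alpha}|u^i_\alpha|^{2^*}\,d\sigma_{\hat h}=O\big((\mu^k_\alpha/\mu^i_\alpha)^n\big)=o(1)$; and the comparable-and-close configuration does not occur. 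Summing the finitely many contributions yields \eqref{*} with $\epsilon_\alpha(R'):=s\,\epsilon(R')+o(1)$, for which $\limsup_{R'\to+\infty}\limsup_{\alpha\to+\infty}\epsilon_\alpha(R')=0$.

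The remaining points are routine: the passage between the weighted volume forms $\rho^{1-2\gamma}\,dv_g$ and $y^{1-2\gamma}\,dxdy$ under rescaling, and between $\tilde h_\alpha$ and $|dx|^2$, is handled exactly as in the proofs of Lemmas \ref{first bubble lemma} and \ref{epsilon regularity lemma} (via \eqref{volume form} and Propositions \ref{weighted trace sobolev embedding}--\ref{fractional compact embedding}), which legitimizes every ``$o(1)$'' above; and the local-uniform convergence of rescaled bubbles used in the exclusion step follows from smoothness of the profiles $U^\lambda_a$ together with $\tilde g_\alpha\to|dx|^2+dy^2$ in $\mathcal C^1$ on fixed balls. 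I expect the exclusion (second paragraph) to be the essential difficulty; the rest is bookkeeping of bubble tails.
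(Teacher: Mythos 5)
Your overall strategy --- reading \eqref{*} off the full decomposition $\hat u_\alpha=\sum_{i=1}^m u^i_\alpha+o(1)$ and then estimating each $\int_{\Omega_\alpha}|u^i_\alpha|^{2^*}$ according to the relative position and scale of $(x^i_\alpha,\mu^i_\alpha)$ versus $(x^k_\alpha,\mu^k_\alpha)$ --- is genuinely different from the paper's, which runs a downward induction on $l$; your bookkeeping of the bubble tails in the various regimes is essentially sound. But the step you yourself single out as the crux, the exclusion of the ``comparable and close'' configuration, is circular as written: you rule out cancellation among the weak limits of the rescaled $u^\iota_\alpha$ by invoking ``all $u^\iota$ are nonnegative by Lemma \ref{nonnegative lemma}'', yet Lemma \ref{nonnegative lemma} is proved in the paper precisely by applying Lemma \ref{interfering lemma} with $l=0$ (to obtain \eqref{strong convergence} and hence a.e.\ convergence of $\tilde u^k_\alpha$ to $u^k$ away from the concentration points). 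At the stage where Lemma \ref{interfering lemma} must be established, the profiles $u^j$ extracted in Lemma \ref{first bubble lemma} are only known to be nontrivial solutions of \eqref{Liouville equation}, with no sign information: only $u^1$ inherits a sign directly from $u_\alpha\ge0$, because for $j\ge2$ the remainder $\hat u^{j-1}_\alpha$ already has the previous bubbles subtracted, and controlling their contribution near $x^j_\alpha$ is exactly the interference question being addressed. So the proposed proof does not close.

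The paper's induction is designed to sidestep precisely this cancellation issue. Combining the level-$l$ induction hypothesis with the strong local convergence $\int_{\mathfrak D_{\tilde R\mu^l_\alpha}(x^l_\alpha)}|\hat u_\alpha-\sum_{i\le l}u^i_\alpha|^{2^*}\,d\sigma_{\hat h}=o(1)$ (which comes from Claim 5 of Lemma \ref{first bubble lemma}, not from positivity) isolates the \emph{single} fixed nontrivial profile $u^k$ in the smallness statement \eqref{**}. Since only one function appears there, no cancellation can occur, and the smallness of $\int|u^k|^{2^*}$ over the indicated region directly forces the dichotomy: either the scales decouple so that $(x^l_\alpha,\mu^l_\alpha)$ joins the excluded family, or the balls separate and one reduces to the disjoint case. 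To repair your argument you would need to prove your exclusion without Lemma \ref{nonnegative lemma} --- for instance by reproducing this single-profile isolation, or by the $W^{1,2}$-orthogonality of the bubble decomposition; as written, the essential step is missing.
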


\begin{proof}
 We prove this lemma by iteration on $l$. For any integer $k$ ($1\leq k\leq m$), if $l=k-1$, combining the above proof of Theorem \ref{main theorem} with Lemma \ref{first bubble lemma} and Proposition \ref{weighted trace sobolev embedding}, we have
$$
\int_{\mathfrak{D}_{R\mu^k_\alpha}(x^k_\alpha)}
|\hat{u}_\alpha-\sum^{k-1}_{i=1}u^i_\alpha-u^k_\alpha|^{2^*}\,d\sigma_{\hat{h}}=o(1),
$$
so \eqref{*} holds for $s=0$.\\

Suppose that \eqref{*} holds for some $l$, $1\leq l\leq k-1$, we need to show that \eqref{*} holds for $l-1$.\\

\underline{Case 1} $d_{\hat{h}}(x^l_\alpha,x^k_\alpha)\nrightarrow0$ as $\alpha\rightarrow+\infty$. Then for any $\bar{R}>0$, up to a subsequence, $\mathfrak{D}_{\bar{R}\mu^l_\alpha}(x^l_\alpha)\cap \mathfrak{D}_{R\mu^k_\alpha}(x^k_\alpha)=\emptyset$, so we have
\begin{equation*}\begin{split}
\int_{\mathfrak{D}_{R\mu^k_\alpha}(x^k_\alpha)
\setminus\cup^s_{j=1}\mathfrak{D}_{R'\lambda^j_\alpha}(y^j_\alpha)}|u^l_\alpha|^{2^*}\,d\sigma_{\hat{h}}
&\leq\int_{M\setminus \mathfrak{D}_{\bar{R}\mu^l_\alpha}(x^l_\alpha)}|u^l_\alpha|^{2^*}\,d\sigma_{\hat{h}}\\
&\leq C\int_{\mathbb{R}^n\setminus D_{\bar{R}}(0)}|u^l|^{2^*}\,d\sigma_{\tilde{h}_\alpha}
\leq C\int_{\mathbb{R}^n\setminus D_{\bar{R}}(0)}|u^l|^{2^*}\,dx.
\end{split}\end{equation*}
Since $\bar{R}>0$ is arbitrary and $u^l\in L^{2^*}(\mathbb{R}^n)$, we get
\begin{equation}
\label{equation40}\int_{\mathfrak{D}_{R\mu^k_\alpha}(x^k_\alpha)\setminus
\cup^s_{j=1}\mathfrak{D}_{R'\lambda^j_\alpha}(y^j_\alpha)}|u^l_\alpha|^{2^*}\,d\sigma_{\hat{h}}=o(1),
 \ \  \ \hbox{as} \ \alpha\rightarrow+\infty.
\end{equation}
So by the induction hypothesis for $l$ and \eqref{equation40} we obtain
\begin{equation*}\begin{split}
&\int_{\mathfrak{D}_{R\mu^k_\alpha}(x^k_\alpha)\setminus
\cup^s_{j=1}\mathfrak{D}_{R'\lambda^j_\alpha}(y^j_\alpha)}
|\hat{u}_\alpha-\sum^{l-1}_{i=1}u^i_\alpha-u^k_\alpha|^{2^*}\,d\sigma_{\hat{h}}\\
&\qquad \leq 2^{2^*-1}\int_{\mathfrak{D}_{R\mu^k_\alpha}(x^k_\alpha)
\setminus\cup^s_{j=1}\mathfrak{D}_{R'\lambda^j_\alpha}(y^j_\alpha)}
|\hat{u}_\alpha-\sum^l_{i=1}u^i_\alpha-u^k_\alpha|^{2^*}\,d\sigma_{\hat{h}}\\
& \qquad \quad+2^{2^*-1}\int_{\mathfrak{D}_{R\mu^k_\alpha}(x^k_\alpha)\setminus
\cup^s_{j=1}\mathfrak{D}_{R'\lambda^j_\alpha}(y^j_\alpha)}
|u^l_\alpha|^{2^*}\,d\sigma_{\hat{h}}\\
&\qquad = o(1)+\epsilon_\alpha(R').
\end{split}\end{equation*}
Thus we have proven that \eqref{*} holds for $l-1$.\\

\underline{Case 2} $d_{\hat{h}}(x^l_\alpha,x^k_\alpha)\rightarrow0$ as $\alpha\rightarrow+\infty$. Let $r_0$ be sufficiently small such that
for any $P\in M$, $x,y\in\mathbb{R}^n$ and $|x|,|y|\leq r_0$,
$$
1/2|x-y|\leq d_{\hat{h}}(\varphi_P(x),\varphi_P(y))\leq 2|x-y|.
$$
Let $\tilde{x}^l_\alpha=(\mu^k_\alpha)^{-1}\varphi^{-1}_{x^k_\alpha}(x^l_\alpha)$, $\tilde{y}^j_\alpha=(\mu^k_\alpha)^{-1}\varphi^{-1}_{x^k_\alpha}(y^j_\alpha)$, then

\begin{equation}\label{balls}\begin{split}
&D_{\frac{R}{2}\frac{\mu^l_\alpha}{\mu^k_\alpha}}\left(\tilde{x}^l_\alpha\right)
\subset (\mu^k_\alpha)^{-1}\varphi^{-1}_{x^k_\alpha}(\mathfrak{D}_{R\mu^l_\alpha}(x^l_\alpha))
\subset D_{2R\frac{\mu^l_\alpha}{\mu^k_\alpha}}\left(\tilde{x}^l_\alpha\right),\\
&D_{\frac{R}{2}\frac{\lambda^j_\alpha}{\mu^k_\alpha}}\left(\tilde{y}^j_\alpha\right)
\subset (\mu^k_\alpha)^{-1}\varphi^{-1}_{x^k_\alpha}(\mathfrak{D}_{R\lambda^j_\alpha}(y^j_\alpha))
\subset D_{2R\frac{\lambda^j_\alpha}{\mu^k_\alpha}}\left(\tilde{y}^j_\alpha\right).
\end{split}\end{equation}
Given $\tilde{R}>0$, from Lemma \ref{first bubble lemma}, Proposition \ref{weighted trace sobolev embedding} and proof of Theorem \ref{main theorem} we have
\begin{equation}\label{equation41}
\int_{\mathfrak{D}_{\tilde{R}\mu^l_\alpha}(x^l_\alpha)}|\hat{u}_\alpha
-\sum^l_{i=1}u^i_\alpha|^{2^*}\,d\sigma_{\hat{h}}=o(1).
\end{equation}
By the assumption for $1\leq l\leq k-1$, i.e.
$$
\int_{\mathfrak{D}_{R\mu^k_\alpha}(x^k_\alpha)\setminus\cup^s_{j=1}
\mathfrak{D}_{R'\lambda^j_\alpha}(y^j_\alpha)}
|\hat{u}_\alpha-\sum^l_{i=1}u^i_\alpha-u^k_\alpha|^{2^*}\,d\sigma_{\hat{h}}
=o(1)+\epsilon_\alpha(R'),
$$
combined with \eqref{equation41} then we get that
$$
\int_{[\mathfrak{D}_{R\mu^k_\alpha}(x^k_\alpha)\setminus
\cup^s_{j=1}\mathfrak{D}_{R'\lambda^j_\alpha}(y^j_\alpha)]\cap \mathfrak{D}_{\tilde{R}\mu^l_\alpha}(x^l_\alpha)}
|u^k_\alpha|^{2^*}\,d\sigma_{\hat{h}}
=o(1)+\epsilon_\alpha(R'),
$$
so using \eqref{balls} we arrive at
\begin{align}\label{**}
\int_{[D_R(0)\setminus\cup^s_{j=1}D_{2R'\lambda^j_\alpha/\mu^k_\alpha}(\tilde{y}^j_\alpha)]
\cap D_{1/2\tilde{R}\mu^l_\alpha/\mu^k_\alpha}(\tilde{x}^l_\alpha)}|u^k|^{2^*}\,d\sigma_{\tilde{h}_\alpha}
=o(1)+\epsilon_\alpha(R').
\end{align}

Next, we consider two scenarios: first, assume $d_{\hat{h}}(x^l_\alpha,x^k_\alpha)/\mu^k_\alpha\rightarrow+\infty$ as $\alpha\rightarrow+\infty$. We claim that
 $d_{\hat{h}}(x^l_\alpha,x^k_\alpha)/\mu^l_\alpha\rightarrow+\infty$ as $\alpha\rightarrow+\infty$. If not, then \eqref{**} with $\tilde{R}$ large enough yields that $\mu^l_\alpha/\mu^k_\alpha\rightarrow0$ as $\alpha\rightarrow+\infty$. Moreover,
 $$
 \frac{d_{\hat{h}}(x^l_\alpha,x^k_\alpha)}{\mu^l_\alpha}=\frac{d_{\hat{h}}(x^l_\alpha,x^k_\alpha)}{\mu^k_\alpha}\frac{\mu^k_\alpha}{\mu^l_\alpha},
 $$
so we can choose $\tilde{R}>0$ such that $\mathfrak{D}_{\tilde{R}\mu^k_\alpha}(x^k_\alpha)\cap \mathfrak{D}_{\tilde{R}\mu^l_\alpha}(x^l_\alpha)=\emptyset$, which reduces to the previous case 1 and, as a consequence, \eqref{*} holds for $l-1$.

Second, if $d_{\hat{h}}(x^l_\alpha,x^k_\alpha)/\mu^k_\alpha\nrightarrow+\infty$ as $\alpha\rightarrow+\infty$, then up to a subsequence,
$d_{\hat{h}}(x^l_\alpha,x^k_\alpha)/\mu^k_\alpha$ converges. Then  (\ref{**}) implies that $\mu^l_\alpha/\mu^k_\alpha\rightarrow+\infty$. Set $y^{s+1}_\alpha=x^l_\alpha$ and $\lambda^{s+1}_\alpha=\mu^l_\alpha$, then
$$
\int_{\mathfrak{D}_{R\mu^k_\alpha}(x^k_\alpha)\setminus\cup^{s+1}_{j=1}
\mathfrak{D}_{R'\lambda^j_\alpha}(y^j_\alpha)}
|\hat{u}_\alpha-\sum^l_{i=1}u^i_\alpha-u^k_\alpha|^{2^*}\,d\sigma_{\hat{h}}
=o(1)+\epsilon_\alpha(R')
$$
and
\begin{equation*}\begin{split}
\int_{\mathfrak{D}_{R\mu^k_\alpha}(x^k_\alpha)\setminus\cup^{s+1}_{j=1}
\mathfrak{D}_{R'\lambda^j_\alpha}(y^j_\alpha)}|u^l_\alpha|^{2^*}\,d\sigma_{\hat{h}}
&\leq \int_{M\setminus \mathfrak{D}_{R'\mu^l_\alpha}(x^l_\alpha)}|u^l_\alpha|^{2^*}\,d\sigma_{\hat{h}}\\
&\leq C\int_{\mathbb{R}^n\setminus D_{R'}(0)}|u^l|^{2^*}\,dx\leq \epsilon_\alpha(R'),
\end{split}\end{equation*}
which yield that
$$
\int_{\mathfrak{D}_{R\mu^k_\alpha}(x^k_\alpha)\setminus\cup^{s+1}_{j=1}
\mathfrak{D}_{R'\lambda^j_\alpha}(y^j_\alpha)}
|\hat{u}_\alpha-\sum^{l-1}_{i=1}u^i_\alpha-u^k_\alpha|^{2^*}\,d\sigma_{\hat{h}}
=o(1)+\epsilon_\alpha(R').
$$
In particular,  \ref{*} holds for $l-1$, as desired. The iteration process is thus completed.

Moreover, we have also shown that  for any $i\neq j$
$$
\frac{\mu^i_\alpha}{\mu^j_\alpha}+\frac{\mu^j_\alpha}{\mu^i_\alpha}
+\frac{d_{\hat{h}}(x^i_\alpha,x^j_\alpha)^2}{\mu^i_\alpha\mu^j_\alpha}\rightarrow+\infty
$$
as $\alpha\rightarrow+\infty$ (c.f.  \cite{A},\cite{D-H-R},\cite{S}). Note that this convergence contains two kinds of bubbles: one case is that  $\mu^i_\alpha=O (\mu^j_\alpha)$ when $\alpha\to+\infty$, then the two blow up points are far away from each other. The other case is that $\mu^i_\alpha=o(\mu^j_\alpha)$ or $\mu^j_\alpha=o(\mu^i_\alpha)$ when $\alpha\to+\infty$, then the distance of the two blow up point cannot be determined. Also we get that $\lambda^j_\alpha/\mu^k_\alpha\rightarrow0$ as $\alpha\rightarrow+\infty$.
\end{proof}

\begin{lemma}\label{nonnegative lemma}
The $u^i$ $(i=0,1,\cdots,m)$ we get in the Theorem \ref{main theorem} are all nonnegative.
\end{lemma}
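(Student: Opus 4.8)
The plan is to argue by induction on $j\in\{0,1,\dots,m\}$. For $j=0$ the assertion $u^0\ge 0$ is exactly Proposition~\ref{u0 property}, so we may assume $j\ge 1$ and that $u^0,\dots,u^{j-1}$ have already been shown to be nonnegative. Recall from the proof of Theorem~\ref{main theorem} (via Lemma~\ref{first bubble lemma}, Claim~5) that, writing $\hat u^0_\alpha:=u_\alpha-u^0$ and $\hat u^i_\alpha:=\hat u^{i-1}_\alpha-\hat w^i_\alpha$ with $\hat w^i_\alpha$ the $i$-th bubble from \eqref{bubble}, the function $u^j$ arises as the strong limit
$$
\tilde u^j_\alpha(z):=(\mu^j_\alpha)^{\frac{n-2\gamma}{2}}\hat u^{j-1}_\alpha(\varphi_{x^j_\alpha}(\mu^j_\alpha z))\ \longrightarrow\ u^j \quad\text{in }W^{1,2}(B^+_R(0),y^{1-2\gamma})\ \text{for every }R>0 .
$$
The key idea is to rescale $u_\alpha$ itself rather than $\hat u^{j-1}_\alpha$. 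Set $v^j_\alpha(z):=(\mu^j_\alpha)^{\frac{n-2\gamma}{2}}u_\alpha(\varphi_{x^j_\alpha}(\mu^j_\alpha z))$ on $B^+_{(\mu^j_\alpha)^{-1}r_0}(0)$; since $\mu^j_\alpha\to0$ these balls exhaust $\mathbb R^{n+1}_+$, and $v^j_\alpha\ge0$ everywhere because $u_\alpha\ge0$. Moreover $v^j_\alpha=\tilde u^j_\alpha+\tilde\psi^j_\alpha$, where $\tilde\psi^j_\alpha$ denotes the rescaling, by the same $(\mu^j_\alpha,x^j_\alpha)$, of $u^0+\sum_{i=1}^{j-1}\hat w^i_\alpha$. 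Thus everything reduces to showing $\tilde\psi^j_\alpha\to 0$ in $L^2(B^+_R(0),y^{1-2\gamma})$ for each $R>0$: combined with $\tilde u^j_\alpha\to u^j$ this yields $v^j_\alpha\to u^j$ in $L^2(B^+_R(0),y^{1-2\gamma})$, hence $v^j_\alpha\to u^j$ a.e.\ along a subsequence, and $v^j_\alpha\ge0$ then forces $u^j\ge0$ in $\mathbb R^{n+1}_+$; since $u^j$ is continuous up to $\mathbb R^n$ by the Appendix regularity (Lemmas~\ref{weighted regularity1}--\ref{weighted regularity2}), also $u^j\ge0$ on $\mathbb R^n$, closing the induction.

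To prove $\tilde\psi^j_\alpha\to 0$ I would treat the two contributions separately. For the $u^0$-part, the Appendix regularity gives $u^0\in L^\infty(\overline X)$, so this term is pointwise bounded by $\|u^0\|_{L^\infty}(\mu^j_\alpha)^{(n-2\gamma)/2}\to0$, which converges to $0$ in $L^2(B^+_R(0),y^{1-2\gamma})$ because $y^{1-2\gamma}$ is locally integrable ($1-2\gamma>-1$). For the $\hat w^i_\alpha$-parts, $1\le i\le j-1$, the induction hypothesis $u^i\ge0$ together with the classification of nonnegative solutions of \eqref{Liouville equation} (\cite{J-L-X}) identifies $u^i=U^{\lambda_i}_{a_i}$ as in \eqref{Poissonformula}, so $u^i$ is bounded and $u^i(z)\le C(1+|z|)^{-(n-2\gamma)}$; the rescaled bubble
$$
\Big(\tfrac{\mu^j_\alpha}{\mu^i_\alpha}\Big)^{\frac{n-2\gamma}{2}}\hat\eta^i_\alpha\,u^i\big((\mu^i_\alpha)^{-1}\varphi^{-1}_{x^i_\alpha}(\varphi_{x^j_\alpha}(\mu^j_\alpha z))\big)
$$
then tends to $0$ in $L^2(B^+_R(0),y^{1-2\gamma})$ by the standard ``non-interfering bubbles'' change of variables (as in \cite{A,D-H-R,S}), splitting into the cases $\mu^j_\alpha/\mu^i_\alpha\to0$, $\mu^i_\alpha/\mu^j_\alpha\to0$, and $\mu^i_\alpha\asymp\mu^j_\alpha$ — the last of which, by relation (5) of Theorem~\ref{main theorem} just established in Lemma~\ref{interfering lemma}, forces $d_{\hat h}(x^i_\alpha,x^j_\alpha)/\mu^j_\alpha\to+\infty$.

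I expect the genuinely technical step to be this last one: showing that each previously extracted bubble becomes $L^2(B^+_R,y^{1-2\gamma})$-negligible under the $j$-th rescaling. The rest is bookkeeping, but here one must run the weighted change of variables carefully and, in the regime $\mu^j_\alpha/\mu^i_\alpha\to+\infty$, exploit the sharp decay $u^i(z)=O(|z|^{-(n-2\gamma)})$ of the extension — which is available precisely because the induction hypothesis lets us identify $u^i$ with a standard bubble; the separation of scales and centres needed in the remaining regimes is exactly the content of relation (5).
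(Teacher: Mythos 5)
Your argument is genuinely different from the paper's, and it is essentially viable, but it is also substantially heavier, and two of the steps you dismiss as bookkeeping need real care. The paper does not induct on $j$ and never touches the interior: for each $k$ it applies Lemma~\ref{interfering lemma} with $l=0$, which says that \emph{after excising finitely many balls $\mathfrak{D}_{R'\lambda^j_\alpha}(y^j_\alpha)$ of relative scale $\lambda^j_\alpha/\mu^k_\alpha\to0$}, one has $\int|\hat u_\alpha-U^k_\alpha|^{2^*}d\sigma_{\hat h}=o(1)+\epsilon_\alpha(R')$ on $\mathfrak{D}_{R\mu^k_\alpha}(x^k_\alpha)$. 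Rescaling, this gives $\tilde u^k_\alpha\to u^k$ in $L^{2^*}_{loc}(D_R(0)\setminus Y)$ for a finite set $Y$, hence a.e.\ on $\mathbb{R}^n$; the $u^0$-contribution dies because $\int_{\mathfrak{D}_{R\mu^k_\alpha}(x^k_\alpha)}|u^0|^{2^*}d\sigma_{\hat h}\to0$ by shrinkage of the domain and scale invariance of the boundary $L^{2^*}$-norm. Thus all the bubble--bubble interaction is already packaged into the excised balls, and the proof needs neither the classification of solutions of \eqref{Liouville equation}, nor pointwise decay of the extensions, nor any regularity of $u^0$ beyond $u^0\in L^{2^*}(M)$.

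By contrast, your route buys nothing extra and pays in three places. First, your treatment of the $u^0$-term in the interior weighted $L^2$ norm is not free: the exact change of variables gives $\int_{B^+_R}y^{1-2\gamma}|\tilde\psi^{0,j}_\alpha|^2\,dz=(\mu^j_\alpha)^{-2}\int_{\mathfrak{B}^+_{CR\mu^j_\alpha}(x^j_\alpha)}\rho^{1-2\gamma}|u^0|^2\,dv_g$, and the factor $(\mu^j_\alpha)^{-2}$ is only beaten if you know improved integrability of $u^0$ -- e.g.\ $u^0\in L^\infty$, which requires invoking Lemma~\ref{weighted regularity1} in Fermi coordinates for a variable-coefficient version of the flat operator it is stated for. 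Second, for the previous bubbles in the regime $\mu^i_\alpha\ll\mu^j_\alpha$ you cannot simply dominate: the pointwise bound $|z|^{-(n-2\gamma)}$, squared, is \emph{not} locally integrable against $y^{1-2\gamma}$ near the concentration point whenever $n\geq 2+2\gamma$ (which covers all $n\geq4$ and $n=3$, $\gamma\leq1/2$), so the ``standard change of variables'' must be carried out exactly, yielding $(\mu^i_\alpha/\mu^j_\alpha)^2\int_{B^+_{CR\mu^j_\alpha/\mu^i_\alpha}}w_{n+1}^{1-2\gamma}|U^{\lambda_i}_{a_i}|^2\,dw$, and one then has to track the (possibly divergent) growth of this truncated integral in each of the cases $n\gtrless 2+2\gamma$ before concluding it is $o(1)$. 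Third, your induction forces you to invoke the classification of nonnegative finite-energy solutions of \eqref{Liouville equation} (\cite{J-L-X,G-Q}) at every stage just to get the decay of the extension, a dependency the paper defers until after nonnegativity is secured. None of these is fatal -- the scheme can be pushed through -- but the paper's boundary-trace argument via Lemma~\ref{interfering lemma} with $l=0$ is shorter, avoids the weight entirely, and is self-contained.
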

\begin{proof}
First of all, note that  $u^0\geq0$ in $\overline{X}$ by Proposition \ref{u0 property}. So we just need to prove the positivity of $u^i$ for $i\geq 1$. For any $k\in[1,m]$, taking $l=0$ in Lemma \ref{interfering lemma}, we have
\begin{align}\label{strong convergence}
\int_{\mathfrak{D}_{R\mu^k_\alpha}(x^k_\alpha)\setminus\cup^s_{j=1}
\mathfrak{D}_{R'\lambda^j_\alpha}(y^j_\alpha)}
|\hat{u}_\alpha-U^k_\alpha|^{2^*}\,d\sigma_{\hat{h}}
=o(1)+\epsilon_\alpha(R')
\end{align}
where
$$
U^k_\alpha(x)=(\mu^k_\alpha)^{-\frac{n-2\gamma}{2}}u^k((\mu^k_\alpha)^{-1}\varphi^{-1}_{x^k_\alpha}(x)),
\ \ \hbox{for} \ \ x\in \mathfrak{D}_{R\mu^k_\alpha}(x^k_\alpha)
$$
is called a bubble. Since $u_\alpha=\hat{u}_\alpha+u^0$, then for $x\in D_{r_0/\mu^k_\alpha}(0)\subset\mathbb{R}^n$, where the $r_0$ is the same as the one mentioned in Theorem\ref{main theorem}, we have
$$
u^k_\alpha(x)=\tilde{u}^k_\alpha(x)+\tilde{u}^{0,k}_\alpha(x),
$$
where
\begin{equation*}\begin{split}
&u^k_\alpha(x)=(\mu^k_\alpha)^{\frac{n-2\gamma}{2}}u_\alpha(\varphi_{x^k_\alpha}(\mu^k_\alpha x)),\\
&\tilde{u}^k_\alpha(x)=(\mu^k_\alpha)^{\frac{n-2\gamma}{2}}\hat{u}_\alpha(\varphi_{x^k_\alpha}(\mu^k_\alpha x)),\\
&\tilde{u}^{0,k}_\alpha(x)=(\mu^k_\alpha)^{\frac{n-2\gamma}{2}}u^0(\varphi_{x^k_\alpha}(\mu^k_\alpha x)).
\end{split}\end{equation*}
Then \eqref{strong convergence} implies that
\begin{align} \label{***}
\int_{D_R(0)\setminus\cup^s_{j=1}D_{2R'\lambda^j_\alpha/\mu^k_\alpha}(\tilde{y}^j_\alpha)}
|\tilde{u}^k_\alpha-u^k|^{2^*}\,dx
=o(1)+\epsilon_\alpha(R'),
\end{align}
where $\tilde{y}^j_\alpha=(\mu^k_\alpha)^{-1}\varphi^{-1}_{x^k_\alpha}(y^j_\alpha)$.
 Noting that $\{d_{\hat{h}}(x^k_\alpha,y^j_\alpha)/\mu^k_\alpha\}_{\alpha\in\mathbb{N}}$ is uniformly bounded by Lemma \ref{interfering lemma}, therefore $\{\tilde{y}^j_\alpha\}_{\alpha\in\mathbb{N}}$ is bounded and there exists a subsequence, also denoted by  $\{\tilde{y}^j_\alpha\}$, such that $\tilde{y}^j_\alpha\rightarrow\tilde{y}^j$ as $\alpha\rightarrow+\infty$ for $j=1,\ldots,s$. Combining \eqref{***} with $\lambda^j_\alpha/\mu^k_\alpha\rightarrow0$ as $\alpha\rightarrow+\infty$, we get
$$
\tilde{u}^k_\alpha\rightarrow u^k,  \ \ \ \hbox{in} \ L^{2^*}_{loc}(D_R(0)\setminus Y)
$$
as $\alpha\rightarrow+\infty$  for $Y=\{\tilde{y}^j\}^{s}_{j=1}$, so
$$
\tilde{u}^k_\alpha\rightarrow u^k  \ \ \hbox{a.e.}  \ \ \hbox{in} \ \ \mathbb{R}^n,
$$
since $R>0$ is arbitrary.

Also note that
$$
\int_{\mathfrak{D}_{R\mu^k_\alpha}(x^k_\alpha)}|u^0|^{2^*}\,d\sigma_{\hat{h}}
=\int_{D_R(0)}|\tilde{u}^{0,k}_\alpha|^{2^*}\,d\sigma_{\tilde{h}^k_\alpha},
$$
where $\tilde{h}^k_\alpha(x)=(\varphi^*_{x^k_\alpha}\hat{h})(\mu^k_\alpha x)$.
Then $\mu^k_\alpha\rightarrow0$ as $\alpha\rightarrow+\infty$ and $u^0\in L^{2^*}(M,\hat{h})$ yield that
$$
\tilde{u}^{0,k}_\alpha\rightarrow 0,  \ \ \ \hbox{in} \ L^{2^*}(D_R(0),|dx|^2)
$$
as $\alpha\rightarrow+\infty$, so
$$
\tilde{u}^{0,k}_\alpha\rightarrow 0 \ \ \hbox{a.e.} \ \ \hbox{in}  \ \ \mathbb{R}^n
$$
since $R>0$ is arbitrary.

 In particular, we have shown that $u^k_\alpha\rightarrow u^k$ almost everywhere on $\mathbb{R}^n$ as $\alpha\rightarrow+\infty$. Note that $u_\alpha$ is nonnegative by definition, so $u^k_\alpha\geq0$ on $\mathbb{R}^n$. We conclude that $u^k\geq 0$ on $\mathbb{R}^n$.
\end{proof}


\section{Appendix }

By the standard elliptic estimates, we can prove the $\mathcal C^\infty$ estimates from the $L^\infty$ estimates by Harnack inequality. Here we give two important technique lemmas. 

\begin{lemma}\label{weighted regularity1}\cite{G-Q}
Let $R>0$ and $u$ be a weak solution of
\begin{equation}
\left\{
\begin{split}
-\divergence(y^{1-2\gamma}\nabla u)=0& \ \ \hbox{in} \ \ B^+_{2R}(0),\\
-\lim_{y\rightarrow0}y^{1-2\gamma}\partial_yu=f(x)u+g(x)|u|^{2^*-2}u& \ \ \hbox{on} \ \ D_{2R}(0).
\end{split}
\right.
\end{equation}
Here $f$ and $g$ are smooth functions on $D_{2R}(0)$. Assume that $\lambda=\int_{D_{2R}(0)}|u|^{2^*}dx<\infty$. Then for any $p>1$, there exists a constant $C_p=C(p,\lambda)$ such that
$$
\sup_{B^+_R(0)}|u|+\sup_{D_R(0)}|u|\leq C_p\left\{R^{-\frac{n+2-2\gamma}{p}}\|u\|_{L^p(B^+_{2R}(0))}+R^{-\frac{n}{p}}\|u\|_{L^p(D_{2R}(0))}\right\}.
$$
\end{lemma}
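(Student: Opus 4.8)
This is the regularity statement of \cite{G-Q}; the plan is to reprove it by a \emph{Brezis--Kato bootstrap} that raises the integrability of $u$ past the critical threshold, followed by a \emph{Moser iteration} adapted to the $A_2$-Muckenhoupt weight $y^{1-2\gamma}$, and a final interpolation to reach an arbitrary exponent $p>1$ on the right-hand side. First I would reduce to the scale $R=1$: setting $v(z)=u(Rz)$, one checks that $v$ solves the same boundary value problem on $B^+_2(0)$ with coefficients $R^{2\gamma}f(R\,\cdot)$ and $R^{2\gamma}g(R\,\cdot)$ and with $\int_{D_2}|v|^{2^*}\,dx=R^{-n}\lambda$; computing the Jacobians of the change of variables in the weighted integral over $B^+_2$ and the unweighted one over $D_2$ produces exactly the factors $R^{-(n+2-2\gamma)/p}$ and $R^{-n/p}$, so it suffices to prove the case $R=1$ (the constant then being allowed to depend on $n,\gamma$ and on $\|f\|_{L^\infty},\|g\|_{L^\infty}$ in addition to $\lambda$).

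For Step~1 I would rewrite the boundary condition as $-\lim_{y\to0}y^{1-2\gamma}\partial_y u=c(x)u$ with $c=f+g|u|^{2^*-2}$; since $(2^*-2)\cdot\tfrac{n}{2\gamma}=2^*$, we have $c\in L^{n/(2\gamma)}(D_2)$ with norm controlled by $\lambda,\|f\|_\infty,\|g\|_\infty$. For $\beta\ge1$ and $T>0$ set $u_T=\min(|u|,T)$ and test the extension equation against $\eta^2 u\,u_T^{2(\beta-1)}$, where $\eta$ is a cutoff between nested half-balls; integrating by parts, using $-\divergence(y^{1-2\gamma}\nabla u)=0$ in $B^+_2$ and substituting the boundary condition, one obtains a Caccioppoli bound for $\int y^{1-2\gamma}|\nabla(\eta u\,u_T^{\beta-1})|^2$, and inserting it into the Euclidean trace--Sobolev inequality of \cite{J-X} (with constant $S(n,\gamma)$, localized by $\eta$) gives
\[
\left(\int_{D_{3/2}}|\eta u\,u_T^{\beta-1}|^{2^*}\,dx\right)^{2/2^*}\le C\beta^2\int_{D_2}|c|\,(\eta u\,u_T^{\beta-1})^2\,dx+C\beta^2\int_{B^+_2}y^{1-2\gamma}|\nabla\eta|^2|u|^{2\beta}\,dz .
\]
Splitting $\int|c|\,w^2$ over $\{|c|\le K\}$ and $\{|c|>K\}$ and choosing $K=K(\beta,c)$ so large that $C\beta^2\|c\|_{L^{n/2\gamma}(\{|c|>K\})}\le\tfrac12$ (possible because $c\in L^{n/2\gamma}$) absorbs the supercritical part; letting $T\to+\infty$ one concludes $u\in L^{2\beta}_{\mathrm{loc}}\Rightarrow u\in L^{2^*\beta}_{\mathrm{loc}}$. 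Starting from $2\beta=2^*$ and iterating finitely many times yields $u\in L^q_{\mathrm{loc}}(D_2)$ for every $q<\infty$, hence $c\in L^{q_0}_{\mathrm{loc}}$ for some $q_0>n/(2\gamma)$.

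In Step~2 the boundary term is now subcritical: testing against $\eta^2|u|^{2(\beta-1)}u$, estimating $\int|c|\,\eta^2|u|^{2\beta}$ by H\"older and interpolation between $L^2$ and $L^{2^*}$ on the trace, and using the weighted trace--Sobolev inequality together with the interior $L^\infty$/Harnack theory for $-\divergence(y^{1-2\gamma}\nabla\cdot)=0$ (degenerate equations with $A_2$ weights, Fabes--Kenig--Serapioni), a standard Moser iteration gives $u\in L^\infty$ near $B^+_{5/4}(0)\cup D_{5/4}(0)$ with $\sup_{B^+_{5/4}}|u|+\sup_{D_{5/4}}|u|\le C\big(\|u\|_{L^2(B^+_{3/2},y^{1-2\gamma})}+\|u\|_{L^2(D_{3/2})}\big)$. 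For the last step, when $p\ge2$ the claimed estimate follows by H\"older; when $1<p<2$ I would run the usual absorption argument, writing the Step~2 bound over the family of half-balls $B^+_r(0)$, $R\le r\le R'\le 2R$, interpolating $\|u\|_{L^2(B^+_r)}\le\|u\|_{L^\infty(B^+_r)}^{1-p/2}\|u\|_{L^p(B^+_r)}^{p/2}$ (and similarly on $D_r$), using Young's inequality to absorb a small multiple of $\sup_{B^+_{r'}}|u|$, and concluding by the standard iteration lemma, which produces the stated bound with $L^p$ on the right and a constant $C_p$ blowing up as $p\to1$.

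The main obstacle will be Step~1: since the boundary nonlinearity has exactly the critical exponent $2^*$, no direct Moser iteration closes, and the Brezis--Kato truncation must be used, with the dangerous term absorbed using only the smallness of the $L^{n/2\gamma}$-\emph{tail} of $c$ --- this is precisely what forces the final constant to depend on $\lambda$ but otherwise not on $u$. Carrying this out cleanly in the weighted half-space, with the correct integration by parts against the degenerate operator and the weighted trace inequality, is the delicate point; the $A_2$-weighted Moser iteration and the interpolation in the last step are routine.
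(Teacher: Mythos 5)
The paper does not prove this lemma at all --- it is imported verbatim from \cite{G-Q} --- and your Brezis--Kato truncation followed by an $A_2$-weighted Moser iteration is precisely the standard argument behind the cited result (compare also \cite{J-L-X}): the scaling computation, the identity $(2^*-2)\cdot\tfrac{n}{2\gamma}=2^*$ placing $c=f+g|u|^{2^*-2}$ in $L^{n/(2\gamma)}(D_2)$ with norm controlled by $\|f\|_\infty,\|g\|_\infty,\lambda$ (note $\|g|u|^{2^*-2}\|_{L^{n/2\gamma}}$ is scale-invariant, which is why the constant depends on $\lambda$ and not on $R^{-n}\lambda$), the tail-absorption for the critical term, and the final interpolation for $1<p<2$ are all correct. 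The one detail worth making explicit is that each iteration step must also upgrade the \emph{bulk} integrability of $u$, since the Caccioppoli term $\int_{B^+_2}y^{1-2\gamma}|\nabla\eta|^2|u|^{2\beta}$ has to be finite at the next stage; this comes for free from the same Caccioppoli estimate combined with the weighted Sobolev inequality $\bigl(\int y^{1-2\gamma}|w|^{2\kappa}\bigr)^{1/\kappa}\leq C\int y^{1-2\gamma}|\nabla w|^2$ with $\kappa=\tfrac{n+2-2\gamma}{n-2\gamma}$, so the argument closes.
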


\begin{lemma}\label{weighted regularity2}\cite{J-L-X}
Let $a(x),b(x)\in \mathcal C^\alpha(D_2(0))$ for some $0<\alpha\notin\mathbb{N}$ and $u\in W^{1,2}(\partial'B^+_2(0),y^{1-2\gamma})$ be a weak solution of
\begin{equation}
\left\{
\begin{split}
-\divergence(y^{1-2\gamma}\nabla u)=0& \ \ \hbox{in} \ \ B^+_2(0),\\
-\lim_{y\rightarrow0}y^{1-2\gamma}\partial_yu=a(x)u+b(x)& \ \ \hbox{on} \ \ D_2(0).
\end{split}
\right.
\end{equation}
If $2\gamma+\alpha\notin\mathbb{N}$, then $u(\cdot,0)$ is of $\mathcal C^{2\gamma+\alpha}(D_1(0))$, and
$$
\|u(\cdot,0)\|_{C^{2\gamma+\alpha}(D_1(0))}
\leq C(\|u\|_{L^\infty(B^+_2(0))}+\|b\|_{C^\alpha(D_2(0))})
$$
where $C>0$ depends only on $n,\gamma,\alpha$ and $\|a\|_{\mathcal C^\alpha(D_2(0))}$.
\end{lemma}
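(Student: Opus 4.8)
The plan is to read the two equations together as a single boundary Schauder problem. The interior equation says that $u$ is $\divergence(y^{1-2\gamma}\nabla\,\cdot\,)$-harmonic in $B^+_2(0)$, while the conormal condition says that, up to the normalizing constant, the Dirichlet-to-Neumann map $P_\gamma$ sends $u(\cdot,0)$ to $a(x)u(\cdot,0)+b(x)$ on $D_2(0)$; on the flat half-space this map coincides, up to a positive constant, with the fractional Laplacian $(-\Delta)^\gamma$ (see \cite{C-S,J-X,G-Q}). Since $(-\Delta)^\gamma$ gains $2\gamma$ Hölder derivatives, the strategy is to first obtain a crude Hölder bound for $u$ up to $\{y=0\}$ and then bootstrap in increments of $2\gamma$ until reaching the optimal exponent $2\gamma+\alpha$.

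First I would establish the base regularity. The weight $y^{1-2\gamma}$ lies in the Muckenhoupt class $A_2$, so the Fabes--Kenig--Serapioni De Giorgi--Nash--Moser theory applies to $-\divergence(y^{1-2\gamma}\nabla\,\cdot\,)$; combined with the even reflection across $\{y=0\}$, which turns the homogeneous conormal condition into the equation $\divergence(|y|^{1-2\gamma}\nabla\,\cdot\,)=0$ on the full ball, and with Moser iteration handling the linear boundary term $a(x)u+b(x)$, this gives a local $L^\infty$ bound and $u\in\mathcal C^{\beta}(\overline{B^+_{3/2}(0)})$ for some $\beta=\beta(n,\gamma)\in(0,1)$, together with the estimate $\|u\|_{\mathcal C^\beta(\overline{B^+_{3/2}})}\le C(\|u\|_{L^\infty(B^+_2)}+\|b\|_{L^\infty(D_2)})$; here the lower-order term $a(x)u$ has been absorbed into the forcing. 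I would cite \cite{J-L-X,J-X} for this input.

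The heart of the matter is the one-step improvement: if $u(\cdot,0)\in\mathcal C^{\tau}$ on a small disk with $\tau\ge 0$, and hence $a\,u(\cdot,0)+b\in\mathcal C^{\min(\alpha,\tau)}$, and if $2\gamma+\min(\alpha,\tau)\notin\mathbb N$, then $u(\cdot,0)$ is $\mathcal C^{2\gamma+\min(\alpha,\tau)}$ on the half-size disk, with the natural estimate. I would prove this by a Campanato excess-decay argument: freeze $a$ at a boundary point $x_1$ and, on dyadic half-balls centered at $(x_1,0)$, compare $u$ with the solution $v$ of the constant-coefficient homogeneous problem $\divergence(|y|^{1-2\gamma}\nabla v)=0$, $\lim_{y\to0}|y|^{1-2\gamma}\partial_y v=0$ having the same boundary values as $u$ on the spherical part; the trace $v(\cdot,0)$ is smooth with scale-invariant bounds on all derivatives, which follows from the even reflection plus interior estimates for the $A_2$-weighted operator, or equivalently from the explicit Poisson kernel $y^{2\gamma}(|x|^2+y^2)^{-(n+2\gamma)/2}$ of \cite{C-S}. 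Subtracting from $u$ the Taylor polynomial of $v(\cdot,0)$ of degree $\lfloor 2\gamma+\min(\alpha,\tau)\rfloor$ and bounding $u-v$ in weighted energy by the forcing $(a-a(x_1))u+b$ produces an excess-decay inequality $\Phi(r/2)\le\theta\,\Phi(r)+Cr^{n+2(2\gamma+\min(\alpha,\tau))}$ with $\theta<1$, which iterates to the Morrey--Campanato characterization of the target Hölder space. The non-integrality hypothesis is exactly what keeps the polynomial degree fixed along the iteration and avoids a borderline logarithmic loss; at intermediate stages one may, if needed, lower the exponent slightly to a nearby non-integer value, since only the final exponent $2\gamma+\alpha$ must be non-integer.

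Finally I would run the bootstrap. Starting from $u(\cdot,0)\in\mathcal C^\beta$ near $D_{3/2}$, the one-step improvement gives successively $\mathcal C^{\min(\alpha,\beta)}$, then $\mathcal C^{\min(\alpha,\beta+2\gamma)}$, and after finitely many applications $u(\cdot,0)\in\mathcal C^\alpha(D_{5/4})$; then $a\,u(\cdot,0)+b\in\mathcal C^\alpha(D_{5/4})$, and one more application with $\tau=\alpha$ yields $u(\cdot,0)\in\mathcal C^{2\gamma+\alpha}(D_1)$ together with the claimed bound, the constant depending on $n,\gamma,\alpha$ and $\|a\|_{\mathcal C^\alpha(D_2)}$ through the finitely many Campanato constants and through absorbing $\|a\|_{\mathcal C^\alpha}\|u\|_{\mathcal C^\tau}$ at each stage. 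The main obstacle is the one-step improvement, and within it the fact that the trace of the comparison solution $v$ of the degenerate constant-coefficient problem is genuinely smooth with sharp polynomial approximation at rate $r^{2\gamma+\tau}$: this is where the degeneracy of $y^{1-2\gamma}$ at the boundary and the non-locality of $P_\gamma$ interact, and where $2\gamma+\alpha\notin\mathbb N$ is essential. Everything else is the routine freezing-of-coefficients machinery of Schauder theory, and I would cite \cite{C-S,J-L-X,J-X} for the weighted-operator input.
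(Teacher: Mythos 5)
The paper does not actually prove this lemma---it is quoted verbatim from \cite{J-L-X}, so there is no in-paper argument to compare against. Your outline (base H\"older regularity from the Fabes--Kenig--Serapioni $A_2$ theory plus even reflection across $\{y=0\}$, then a freezing-of-coefficients Campanato excess-decay against the constant-coefficient degenerate Neumann problem, bootstrapped in increments of $2\gamma$ with the non-integrality hypothesis guarding the final exponent) is the standard and correct route, and is essentially how the Schauder estimate is established in the cited reference and the surrounding Caffarelli--Silvestre extension literature.
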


\bibliographystyle{amsplain}

\end{document}